\documentclass{amsart}

\usepackage{amsmath,amsfonts,amssymb}
\usepackage{xcolor,mhsetup}
\usepackage[extra,safe]{tipa}
\usepackage{mathtools}
\usepackage[left=2cm,right=2cm,top=2cm,bottom=2cm]{geometry}
\usepackage[applemac]{inputenc}
\usepackage{mathrsfs}
\usepackage{mathabx}
\usepackage{bm}
\usepackage{stmaryrd}
\usepackage{euscript}
 \usepackage{pstricks}
\usepackage{amsthm}
\usepackage{latexsym,amsmath,amscd}
\usepackage{graphicx}
\usepackage{color}
 \usepackage{dsfont}
\usepackage{enumerate}
\usepackage[normalem]{ulem}
\usepackage{pifont}
\usepackage{aliascnt}
\usepackage[hidelinks]{hyperref}
\usepackage{setspace}

\theoremstyle{definition}
\newtheorem{theorem}{Theorem}[section]
\newtheorem{prop}{Proposition}[section]

\newtheorem{lem}{Lemma}[section]
\newtheorem{corol}{Corollary}[section]

\theoremstyle{remark}
\newtheorem{remark}[theorem]{Remark}

\numberwithin{equation}{section}
\newcommand{\brem }{\begin{rem} \rm }
\newcommand{\erem }{\end{rem}}
\newcommand{\brems }{\begin{rems} \rm }
\newcommand{\erems }{\end{rems}}
\newcommand{\bhyp }{\begin{hyp} \rm }
\newcommand{\ehyp }{\end{hyp}}
\newcommand{\bex}{\begin{ex} \rm }
\newcommand{\eex}{\end{ex}}





\def\my_c{c_\infty}

\def \z{{\mathbf{z}}}

\newcommand{\mynewtheorem}[2]{
  \newaliascnt{#1}{dummy}
  \newtheorem{#1}[#1]{#2}
  \aliascntresetthe{#1}
  \expandafter\def\csname #1autorefname\endcsname{#2}
}

\newcommand{\be}{\begin{equation}}
\newcommand{\ee}{\end{equation}}
\newcommand{\bde}{\begin{displaymath}}
\newcommand{\ede}{\end{displaymath}}
\newcommand{\beq}{\begin{eqnarray*}}
\newcommand{\eeq}{\end{eqnarray*}}
\newcommand{\beqa}{\begin{eqnarray}}
\newcommand{\eeqa}{\end{eqnarray}}
\newcommand{\bel }{\left\{\begin{array}{ll}}
\newcommand{\eel}{\cr \end{array} \right.}

\newcommand{\seq}[1]{{\lbrace #1 \rbrace}}

\newcommand{\dcb}{\begin{array}{lll}}
\newcommand{\dce}{\end{array}}
\newcommand{\ebe}{\begin{enumerate}\setlength{\baselineskip}{13pt}\setlength{\parskip}{0pt}}
\newcommand{\dbe}{\end{enumerate}}


\def\s{\bold{s}}

\def\z{\bold{z}}

\newcommand{\E}{\mathcal{E}}

\def\F{{\cal F}}

\def\rr{{\mathbb R}}

\def\Q{{\mathbb Q}}
\def\P{{\mathbb P}}

\def\I{\mathsf{1}}

\newcommand \A[1]{{\bf (#1)}}

\def\F{{\mathcal F}}

\def\R{{\mathbb{R}} }
\def\N{{\mathbb{N}} }
\def\E{{\mathbb{E}}  }

\def\P{{\mathbb{P}}  }
\def\Q{{\mathbb{Q}}  }

\def\I{{\mathbf{1}}}

\def\bint#1^#2{\displaystyle{\int_{#1}^{#2}}}
\def\bsum#1^#2{\displaystyle{\sum_{#1}^{#2}}}

\def\xdt_#1{X_#1(\Delta t)}

\def\0{{\mathbf{0}}}

\begin{document}

\title{Weak uniqueness and density estimates for SDEs with coefficients depending on some path-functionals}

\author{Noufel Frikha}
\address{Noufel Frikha, Laboratoire de Probabilités et Modèles Aléatoires, UMR 7599, Universit\'e Paris Diderot, Paris VII,  
Bâtiment Sophie Germain, 5 rue Thomas Mann, 75205 Paris CEDEX 13}
\email{frikha@math.univ-paris-diderot.fr}



\author{Libo Li}
\address{Libo Li, School of Mathematics and Statistics, University of New South Wales, Sydney, Australia}
\email{libo.li@unsw.edu.au }

\subjclass[2000]{Primary 60H10, 60G46; Secondary 60H30, 35K65}

\date{\today}

\keywords{weak uniqueness; martingale problem; parametrix expansion; local time; maximum; density estimates}

\begin{abstract}
In this paper, we develop a general methodology to prove weak uniqueness for stochastic differential equations with coefficients depending on some path-functionals of the process. As an extension of the technique developed by Bass \& Perkins \cite{bass:perk:09} in the standard diffusion case, the proposed methodology allows one to deal with processes whose probability laws are singular with respect to the Lebesgue measure. To illustrate our methodology, we prove weak existence and uniqueness in two examples : a diffusion process with coefficients depending on its running symmetric local time and a diffusion process with coefficients depending on its running maximum. In each example, we also prove the existence of the associated transition density and establish some Gaussian upper-estimates.

\end{abstract}

\maketitle

\section{Introduction}
 
In the present paper, we investigate the weak existence and uniqueness of a one-dimensional stochastic differential equation (SDE in short) with coefficients depending on some path-functional $A$ and dynamics given by
\begin{equation}
\label{sde:dynamics}
X_t = x + \int_0^{t} b(X_{s},A_s(X)) ds + \int_{0}^t \sigma(X_s,A_s(X)) dW_s, \ t\in [0,T]
\end{equation}

\noindent where $(W_t)_{t \geq 0}$ stands for a one-dimensional Brownian motion and $(A_t(X))_{t \geq 0}$ is an $\rr^{d-1}$-valued functional depending on the path $X$, $d\geq2$. Some examples include its local and occupation times, its running maximum or minimum, its first hitting time of a level, its running average, etc. From the point of view of applications, systems of the type \eqref{sde:dynamics} appear in many fields. Let us mention stochastic Hamiltonian systems where $A_t(X)=\int_0^t F(s,X_s) ds$, see e.g. \cite{Soize} for a general overview, \cite{Talay} for convergence to equilibrium or \cite{Barucci} for an application to the pricing of Asian options. We also mention \cite{Forde20112802}, where the author constructs a weak solution to the SDE \eqref{sde:dynamics} with $b\equiv0$ and $A_t(X)=\max_{0\leq s \leq t}X_s$ is the running maximum of $X$ and investigates an application in mathematical finance. 

In the standard multi-dimensional diffusion framework, the martingale approach initiated by Stroock and Varadhan turns out to be particularly powerful when trying to get uniqueness results. It is now well-known that the martingale problem associated to a multi-dimensional diffusion operator is well posed as soon as the drift is bounded measurable and that the diffusion matrix is continuous (with respect to the space variable) and strictly positive, see e.g. Stroock and Varadhan \cite{stro:vara:79}. In the indicated framework, uniqueness is derived from Calder\'{o}n-Zygmund estimates. Also, when $a$ is H\"older continuous, an analytical approach using Schauder estimates can be applied, see e.g. Friedman \cite{frie:64}.

Recently, Bass and Perkins \cite{bass:perk:09} introduced a new technique for proving uniqueness for the martingale problem and illustrated it in the framework of non-degenerate, non-divergence and time-homogeneous diffusion operators under the assumption that the diffusion matrix is strictly positive and H\"older continuous.  It has also been recently extended by Menozzi \cite{Menozzi} for a class of multi-dimensional degenerate Kolmogorov equations that is the case of a multi-dimensional path functional $A=(A^1_t,\cdots, A^{N}_t)_{t\geq0}$ given by: $A^1_t(X) = \int_0^t F_1(X_s, A_s(X)) ds$, $A^2_t(X) = \int_0^t F_2(A^{1}_s, \cdots, A^{N}_s) ds$, $\cdots$, $A^{N}_t(X) = \int_0^t F_N(A^{N-1}_s,A^{N}_s) ds$, under an assumption of weak H\"ormander type on the functions $(F_1,\cdots, F_N)$. The approach in the two mentioned papers consists in using a perturbation method for Markov semigroups, known as the parametrix technique, such as exposed in Friedman \cite{frie:64} in the case of uniformly elliptic diffusion. More precisely, the first step of the strategy is to approximate the original system by a simple process obtained by freezing the drift and the diffusion coefficients in the original dynamics, and use the fact that the transition density of such approximation as well as its derivatives can be explicitly estimated. Then, the key ingredient is the \emph{smoothing property} of the underlying \emph{parametrix kernel}, see assumption \A{H1} (iv) in Section \ref{perturb:section} for a precise statement. This property reflects the quality of the approximation of the original dynamics. An important remark is that this \emph{smoothing property} is only achieved when the freezing point, that is the point where the coefficients are evaluated in the approximation process, is chosen to be the terminal point where the transition density is evaluated.

The main purpose of this paper is to develop a technique in order to prove weak uniqueness as well as existence of a transition density for some SDEs with path-functional coefficients where the probability law of the couple $(X_t,A_t(X))$ may be singular with respect to the Lebesgue measure on $\rr^d$. The main new feature added here compared to previous works on this topic is that our technique enables us to deal with a process whose probability law is absolutely continuous with respect to a $\sigma$-finite measure.

Our methodology can be summed up as follows: suppose that the transition density of the Markov process $(W_t, A_t(W))_{t\geq0}$ with initial point $x$  exists with respect to a $\sigma$-finite measure $\nu(x, dy)$, not necessarily being the Lebesgue measure on $\rr^d$, and a chain rule (It\^o's) formula for $f(t, X_t,A_t(X))$ is available, where $f$ belongs to a suitable class of functions $\mathcal{D}$ related to the domain of the infinitesimal generator of the Markov process $(W_t, A_t(W))_{t\geq0}$. Then as soon as the derivatives of the transition density of $(W_t, A_t(W))_{t\geq0}$ satisfies some good estimates or equivalently if the \emph{parametrix kernel} enjoys a smoothing property with respect to $\nu(x,dy)$, one has the main tools to prove weak uniqueness for the SDE \eqref{sde:dynamics}.

Since the probability law of the process $(X_t,A_t(X))_{t\geq0}$ may be singular, it is not clear how to select the approximation process and even if this crucial \emph{smoothing property} will be achieved in such context. Let us be more precise on one example. If one considers the couple $(X_t, A_t(X))_{t\geq0}$, $A_t(X)=L^{0}_t(X)$ being the symmetric local time at point $0$ accumulated by $X$ up to time $t$, it is easy to see that on $\left\{ T_0 > t \right\}$, $T_0$ being the first hitting time of $0$ by $X$, one has $L^{0}_t(X)=0$ whereas on $\left\{T_0 \leq t \right\}$, the process may accumulate local time so that the probability law of the couple $(X_t, L^{0}_t(X))$ consists in two parts, one being singular with an atom in the local time part, the other one (hopefully) being absolutely continuous with respect to the Lebesgue measure on $\rr \times \rr_+$. Hence, we see that for such dynamics the situation is more challenging than in the standard diffusion setting. This new difficulty will be overcome by choosing two independent parametrix kernels, one for each part, and then by proceeding with a non-trivial selection of the freezing point according to the singular measure induced by the approximation process. Even in this singular framework, one is able to prove that the \emph{smoothing property} of the parametrix kernel still holds which is as previously mentioned the keystone to prove weak uniqueness for the SDE \eqref{sde:dynamics}. As far as we know these feature appears to be new.

It will be apparent in what follows that our approach is not limited to the one-dimensional SDE case and can be easily adapted to multi-dimensional examples such as the one studied in \cite{Menozzi}. However, we decide to confine our presentation to the one-dimensional framework to foster understanding of the main arguments. As an illustration of our methodology we consider two examples. In the first one, we choose $A_t(X)=L^{0}_t(X)$, $t\geq0$, where $L^{0}_t(X)$ is the symmetric local time at $0$ accumulated by $X$ up to time $t$. The bivariate density of Brownian motion and its running symmetric local time at $0$ can be found e.g. in Karatzas \& Shreve \cite{kararzas:shreeve:local}. In the second example, we consider the running maximum of the process, that is $A_t(X) = \max_{0\leq s \leq t} X_s$, $t\geq0$. 
 

The first part of our main results can be seen as an application of the general methodology developed in Section \ref{weak:uniqueness:framework}. More precisely, in Theorem \ref{weak:existence:local:time} and Theorem \ref{bivariate:density:maximum} we prove that weak uniqueness holds for the SDE \eqref{sde:dynamics} when the path-functional is the symmetric local time or the running maximum, under the assumption that the drift $b$ is bounded measurable and the diffusion coefficient $a=\sigma^2$ is uniformly elliptic, bounded and $\eta$-H\"older-continuous, for some $\eta \in (0,1]$. 

Finally, the strategy developed in this paper can be used not only to prove the existence, but also to retrieve an explicit representation of the transition density (with respect to a $\sigma$-finite measure) of $(X_t, A_t(X))_{t \geq 0}$ as an infinite series. Obviously, such results are out of reach by using standard Malliavin calculus techniques, which cannot be employed here under such rather mild smoothness assumptions on the coefficients, or the Fourier transform approach developed in Fournier and Printemps \cite{fournier:printems:10}. 

However, one has to overcome new technical issues compared to the standard diffusion setting investigated by Friedman \cite{frie:64} or even to the degenerate case considered by Delarue and Menozzi \cite{dela:meno:10}. Leaving this technical discussion to Section \ref{uniqueness:density:local:time}, we only point out that the main difficulty lies in the non-integrable time singularity induced by the mixing of the singular and non-singular parts of the parametrix kernel. In order to overcome this issue, which to our best knowledge appears to be new, the key idea is to use the symmetry of the density of the killed proxy with respect to the initial and terminal points, in order to retrieve the integrability in time of the underlying parametrix kernel. As the second part of our main results, we prove the existence of the transition density for $(X_t, A_t(X))_{t \geq 0}$ as well as its representation in infinite series for the two examples mentioned before, see Theorem \ref{pn:local:time} and Theorem \ref{pn:max} below. Some Gaussian upper-estimates are also established.

\subsection*{Notations:}\rule[-10pt]{0pt}{10pt}\\
\label{sec:not}We introduce here some basic notations and definitions used throughout this paper. For a sequence of linear operators $(S_i)_{1\leq i \leq n}$, we define $\prod_{i=1}^{n} S_i = S_1 \cdots S_n$. We will often use the convention $\prod_{\emptyset} = 1$ which appears when we have for example $ \prod_{i=0}^{-1} $. Let $\mathcal{J}$ be a subset of $\rr^d$, we denote by $\mathcal{C}^{k}_b(\mathcal{J})$, the collection of bounded continuous functions which are $k$-times continuously differentiable with bounded derivatives in the interior of $\mathcal{J}$. The derivatives at the boundary $\partial \mathcal{J}$ are defined as limits from the interior and it is assumed that they exist and are finite. The set $\mathcal{B}_b(\mathcal{J})$ is the collection of real-valued bounded measurable maps defined on $\mathcal{J}$. Furthermore we will use the following notation for time and space variables $\s_p=(s_1,\cdots,s_p)$, $\z_p=(z_1,\cdots,z_p)$, the differentials $d\s_p = ds_1 \cdots ds_p$, $d\z_p = dz_1 \cdots dz_p$ and for a fixed time $t \geq 0$, we denote by $\Delta_p(t)  = \left\{ \s_p\in [0,t]^p: s_{p+1}:=0 \leq s_p\leq s_{p-1} \leq \cdots \leq s_1 \leq t=:s_0 \right\}$. For a multi-index $\alpha=(\alpha_1, \cdots, \alpha_\ell)$ of length $\ell$, we sometimes write $\partial_\alpha f(x) = \partial_{x_{\alpha_1}}\cdots \partial_{x_{\alpha_\ell}}f(x)$, for a vector $ x $. 
We denote by $y \mapsto g(ct,y)$ the transition density function of the standard Brownian motion with variance $c$, i.e. $g(ct,y) = (2\pi t c)^{-1/2}\exp(-y^2/(2tc))$, $y\in \R$. The associated non-normalized Hermite polynomials are defined respectively as $H_i(ct,y)= \partial^{i}_y g(ct,y)$, $i\in \N$.  For a fixed given point $z\in \rr^d$, the Dirac measure is denoted by $\delta_z(dx)$. For $a,b\in \rr$, we use the notation $a \asymp b$ if there exists a constant $C>1$ such that $C^{-1} b \leq a \leq C b$. We denote by $|f|_\infty$ the supremum norm of a function $f$. Throughout the paper, we will often use the space-time inequality $\forall x\in \rr$, $|x|^p e^{-q x^2} \leq (p/(2qe))^{p/2}$, valid for any $p,q>0$ and sometimes will omit to refer to it explicitly.\\

\section{Abstract framework for weak uniqueness}\label{weak:uniqueness:framework}


\subsection{A perturbation formula}\label{perturb:section}\rule[-10pt]{0pt}{10pt}\\
Throughout this section, we assume that there exists a weak solution $(X,W)$, $\left\{\mathcal{F}_t \right\}$ to \eqref{sde:dynamics} and that the process $Y_t:=(X_t,A_t(X))_{t\geq0}$, starting from the initial point $x$ at time $0$, lives on a closed space $\mathcal{J} \subset \rr^d$. The process $(X_t,A_t(X))_{t\geq0}$ induces a probability measure $\P^x$ (or simply denoted $\P$) on $\Omega=\mathcal{C}([0,\infty), \mathcal{J})$ which is endowed with the canonical filtration $(\mathcal{F}_t)_{t\geq0}$. We consider the collection of linear maps $(P_t)_{t\geq0}$ defined by $P_t f(x):=\E[f(X_t,A_t(X))]$ for $f\in \mathcal{B}_b(\mathcal{J})$. It is important to point out that, at this moment, we do not know whether $Y$ is a (strong) Markov process or not. However, one of our main assumption (see assumption \A{H1} (ii) below) links the process $Y$ to the solution of a martingale problem in the sense of Stroock \& Varadhan \cite{stro:vara:79}. As in the standard diffusion case, the strong Markov property will be a consequence of weak uniqueness. 

We define the shift operators $(\theta_ty)(s)= y(t+s)$, $0\leq s < \infty$ for $t \geq 0$ and $y \in \mathcal{C}([0,\infty), \mathcal{J})$. For any deterministic time $t\geq0$, we denote by $\mathbb{Q}_{t,w}$ the regular conditional probability for $\P^x$ given $\mathcal{F}_t$ and let $\P_{t,w}$ be the probability measure given by $\P_{t,w}:= \Q_{t,w} \circ \theta^{-1}_t$. In particular, for every $F \in \mathcal{B}(\mathcal{C}([0,\infty),\mathcal{J}))$, one has $\P^x(\theta^{-1}_t F\,|\,\mathcal{F}_t)(w) = \mathbb{Q}_{t,w}(\theta^{-1}_tF) = \P_{t,w}(F)$, for $\P^x$-a.e. $w\in \Omega$. 

As a substitute for the Markov property, we will assume that there exists a weak solution starting from $x=y(t)$ at time $0$ such that the probability measure induced by this solution is exactly $\P_{t,w}$, for every $w\notin N$, $N$ being a $\P^x$-null event of $\mathcal{F}_t$. As in the standard diffusion case, this will be a standard consequence of existence of solutions to the associated martingale problem. This assumption allows to prove that the finite-dimensional distributions are unique once we know that two weak solutions have the same one-dimensional marginal distributions, see e.g. Chapter 5 in \cite{kara:shre:91} or Chapter VI in \cite{bass:97}. 

We denote the approximation or the proxy process by $\bar X$, that is the solution of \eqref{sde:dynamics} with $b = 0$ and the diffusion coefficient $\sigma$ evaluated at some fixed point $z \in \mathcal{J}$. Without going into details at this point, the key idea is to consider the process $Y$ as a perturbation of the proxy $\bar Y = (\bar X_t,A(\bar X_t))_{t\geq0}$ whose law is denoted by $\bar p_t(x,dy) = \bar p_t^z(x, dy)$. Accordingly, we define the collection of linear maps $(\bar P_t)_{t\geq 0}$ by $\bar{P}_t f(x) :=\mathbb{E}[f(\bar{X}_t, A_t(\bar{X}))] = \int f(y) \bar{p}_t(x,dy)$ for $f\in \mathcal{B}_b(\mathcal{J})$. To indicate that one is working with the approximation or the proxy process with coefficients frozen at $z$ or functions associated with the approximation process frozen at $z$, we put a bar on top of the function. To indicate that the frozen point is the terminal point $y$ of the proxy density, we will put a hat instead of a bar. To derive a first order expansion of $(P_t)_{t\geq0}$ and prove weak uniqueness, we make the following assumptions:\\

\noindent {\bf Assumptions (H1):} Given the initial and frozen point $x,z\in \mathcal{J}$.
\begin{enumerate}
\item[(i)] 
\begin{enumerate}
\item[(a)] The proxy process $\bar Y^z$ is a Markov process with infinitesimal generator $\mathcal{\bar L}^z$.
\item[(b)] There exists a $\sigma$-finite measure $\nu(x,.)$ such that for all $t>0$, the law of $\bar Y^z_t$ is absolutely continuous with respect to $\nu(x,.)$. More specifically, there exists a $\nu(x,dy)$-integrable function $(t,x,y)\mapsto \bar p^z_t(x, y)$ satisfying
\bde
\bar p^z_t(x, dy) = \bar p^z_t(x, y) \nu(x,dy) \label{hypi}
\ede 
\noindent and $\bar P^z_t f(x)=\int f(y) \bar p^z_t(x, y) \nu(x,dy)$ for all $f \in \mathcal{B}_b(\mathcal{J})$.
\end{enumerate}
\item[(ii)]  There exists a class of functions $\mathcal{D}\subset \mathrm{Dom}( \mathcal{\bar{L}}^z)\cap\mathcal{C}_b(\mathcal{J})$ and a linear operator $\mathcal{L}$ acting on $\mathcal{D}$ such that:
\begin{enumerate}
\item[(a)] For all $g\in \mathcal{C}^{\infty}_b(\mathcal{J})$, $\bar{P}^z_t g \in \mathcal{D}$.
\item[(b)] For all functions $h$ such that: $s\mapsto h(s,.)\in \mathcal{C}^{1}(\rr_+,\mathcal{D})$, the process 
\begin{gather*}
h(t, Y_t) - h(0,x) - \int_0^t \left\{ \partial_1 h(s, Y_s) + \mathcal{L}h(s, Y_s) \right\} ds, \quad t\geq 0
\end{gather*}
is a continuous square integrable martingale under $\P^x$.
\item[(c)] There exists a \emph{parametrix kernel} $\bar{\theta}_t$ with respect to the measure $\nu$, that is a measurable map $(t, z, x, y) \mapsto \bar \theta^{z}_t(x, y)$ such that for all $g\in \mathcal{C}^{\infty}_b(\mathcal{J})$
\begin{align}
(\mathcal{L}- \mathcal{ \bar L}^z)\bar P^z_tg (x) = \int g(y)  \bar \theta^{z}_t(x, y) \nu(x,dy),  \, \quad t>0.\label{hypii}
\end{align}
\end{enumerate}
\item[(iii)]  For all $x, y \in \mathcal{J}$, the maps $(t,z)\mapsto\bar p_t^{z}(x, y)$ and $(t,z) \mapsto \bar \theta^{z}_{t}(x, y)$ are continuous on $(0,\infty) \times \mathcal{J}$.\\
\item[(iv)]  For all $t>0$, there exists some $\nu(x,dy)$-integrable functions $p^*_t(x,y)$, $\theta^{*}_t(x,y)$, a constant $\bar \zeta = \bar \zeta(z)\in \rr$ and a positive constant $C$, eventually depending on $t$ but in a non-decreasing way, such that
\bde
 |\bar p_t^{z}(x, y)| \leq p^*_t(x, y), \quad |\bar \theta_t^{z}(x, y)| \leq \theta^*_t(x, y), \quad \mathrm{and}  \quad \int | \theta^{*}_t(x, y)| \nu(x,dy) \leq C t^{\bar \zeta}.
\ede 
For the case $z = y$, we assume that the parametrix kernel enjoys the following \emph{smoothing property:} there exists $\zeta>-1$ and a positive constant $C$, eventually depending on $t$ in a non-decreasing way, such that 
\be
\forall t>0,\, \forall x\in \mathcal{J}, \quad\int  |\bar{\theta}^{y}_t(x,y)| \nu(x,dy) \leq C t^\zeta. \label{hypiv}
\ee 

\item[(v)] For any $g\in \mathcal{C}_b(\mathcal{J})$, one has
\begin{align*}
\lim_{t\downarrow 0}\int g(y)\bar p_t^y(x, y)\nu(x, dy)= g(x).
\end{align*}
\end{enumerate}

\noindent For notational simplicity, we define for $t>0$,
\begin{align*}
\hat{p}_t(x, y) 	&:= \bar{p}^y_t(x, y), \\
\hat P_t f(x)  		& := \int f(y) \hat p_t(x, y) \nu(x,dy) = \int f(y) \bar p^{y}_t(x, y) \nu(x,dy),\\
\mathcal{S}_{t}g(x) & := \int g(y)\hat \theta_{t}(x,y)  \nu(x,dy) = \int g(y)\bar\theta^y_{t}(x,y)  \nu(x,dy).
\end{align*}

\begin{remark} The set of assumptions \A{H1} will allow us to prove a perturbation formula of the map $P_t$ around $\hat{P}_t$, see Theorem \ref{t1} below. The kernel of $\mathcal{S}_t$ defined above satisfies the smoothing property (iv) which is, as mentioned in the introduction, the key point to prove uniqueness in law for equation \eqref{sde:dynamics}. This smoothing property was exploited in \cite{bass:perk:09} and then in \cite{Menozzi} for some degenerate Kolmogorov equations. The main new feature added here is that we are able to deal with a process that admits a density with respect to a $\sigma$-finite measure (with eventually several atoms). In particular, the process can be singular in the sense that it may not admit a transition density with respect to the Lebesgue measure on $\mathcal{J}$. 

Assumption \A{H1} (ii) b) provides a chain rule formula for the process $Y=(X_t, A_t(X))_{t\geq0}$ for a suitable class of functions $\mathcal{D}$ included in the domain of $\bar{\mathcal{L}}$. The operator $\mathcal{L}$ is identified by means of this chain rule formula. As we will see in Section \ref{examples}, this assumption will help us to formulate the martingale problem associated to the process $Y$. This will be used later on in order to establish the existence of a weak solution to the SDE \eqref{sde:dynamics}. It is important to point out that we don't know if $Y$ is a (strong) Markov process for the moment. In general, as in the standard diffusion case, this will be a consequence of weak uniqueness, or equivalently of the well-posedness of the martingale problem, see \cite{stro:vara:79}, \cite{kara:shre:91} or \cite{bass:97}.
    
\end{remark}

\begin{theorem}\label{t1}
Assume that \A{H1} holds. Then, for any $g\in \mathcal{C}_b(\mathcal{J})$,
\begin{align*}
P_Tg(x) = \hat P_Tg(x) + \int^T_0 P_s \mathcal{S}_{T-s}g(x)\,ds.
\end{align*}
\end{theorem}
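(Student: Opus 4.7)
The plan is a first-order parametrix (Duhamel-type) expansion: apply the chain rule of \A{H1}(ii)(b) to the time-reversed frozen proxy $v^z(s,x'):=\bar P^z_{T-s}g(x')$, convert the resulting generator difference $(\mathcal{L}-\bar{\mathcal{L}}^z)\bar P^z_{T-s}g$ into an integral against $\bar\theta^z$ via \A{H1}(ii)(c), and then freeze the auxiliary parameter $z$ at the terminal point $y$ inside the resulting $dy$-integrals. The main difficulty lies precisely in this last freezing step.

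I first take $g\in\mathcal{C}^\infty_b(\mathcal{J})$ and, for each fixed $z\in\mathcal{J}$, consider $v^z(s,x')=\bar P^z_{T-s}g(x')$ on $[0,T]\times\mathcal{J}$. By \A{H1}(ii)(a) one has $v^z(s,\cdot)\in\mathcal{D}$; since $\bar P^z$ is the semigroup generated by $\bar{\mathcal{L}}^z$ (\A{H1}(i)(a)), the backward equation $\partial_s v^z=-\bar{\mathcal{L}}^z v^z$ places $s\mapsto v^z(s,\cdot)$ in $\mathcal{C}^1(\mathbb{R}_+,\mathcal{D})$, so $v^z$ is an admissible test function for \A{H1}(ii)(b). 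Applying that martingale identity on $[0,T]$, taking expectations (the martingale starts at $0$), and using $v^z(T,\cdot)=g$ together with $v^z(0,x)=\bar P^z_T g(x)$ yields
\begin{equation*}
P_Tg(x) \;=\; \bar P^z_T g(x) \;+\; \int_0^T \mathbb{E}\!\left[(\mathcal{L}-\bar{\mathcal{L}}^z)\bar P^z_{T-s}g(Y_s)\right] ds.
\end{equation*}
Inserting \A{H1}(ii)(c) and expanding $\bar P^z_T g(x)=\int g(y)\bar p^z_T(x,y)\nu(x,dy)$ via \A{H1}(i)(b) produces, for every $z\in\mathcal{J}$,
\begin{equation*}
P_Tg(x) \;=\; \int g(y)\bar p^z_T(x,y)\,\nu(x,dy) \;+\; \int_0^T \mathbb{E}\!\left[\int g(y)\bar\theta^z_{T-s}(Y_s,y)\,\nu(Y_s,dy)\right] ds. \qquad(\star)
\end{equation*}

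It remains to freeze $z=y$ inside the two $dy$-integrals of $(\star)$, which formally turns $\bar p^z_T\mapsto\hat p_T$ and $\bar\theta^z_{T-s}\mapsto\hat\theta_{T-s}$ and identifies the right-hand side with $\hat P_Tg(x)+\int_0^T P_s\mathcal{S}_{T-s}g(x)\,ds$. This is the main technical obstacle: although $(\star)$ holds for every individual $z$, only the sum of the two right-hand terms is $z$-independent, not each term separately, so a pointwise substitution $z=y$ cannot be made verbatim. The rigorous argument proceeds by localization: decompose $g=\sum_i g_i$ with each $g_i$ supported in a small neighborhood of a base point $y_i$, apply $(\star)$ to $g_i$ with $z=y_i$, sum over $i$, and pass to the limit as the mesh shrinks; the continuity of $z\mapsto\bar p^z_T(x,y)$ and $z\mapsto\bar\theta^z_s(x',y)$ provided by \A{H1}(iii), together with the uniform-in-$z$ dominations $p^*_t$ and $\theta^*_t$ of \A{H1}(iv), justify the limit by dominated convergence. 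Finally, density of $\mathcal{C}^\infty_b(\mathcal{J})$ in $\mathcal{C}_b(\mathcal{J})$ combined with the integrability bounds of \A{H1}(iv) extends the identity from smooth $g$ to all $g\in\mathcal{C}_b(\mathcal{J})$, giving the claimed formula.
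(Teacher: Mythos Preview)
Your overall strategy---derive a Duhamel-type identity $(\star)$ from the chain rule for $\bar P^z_{T-s}g$, then freeze $z=y$ by a localization argument---is exactly the paper's approach; your partition-of-unity device is the discrete analogue of the paper's mollifier-plus-integration-over-$z$ diagonalisation. However, your write-up omits the regularization $\bar P^z_{T-s+r}$, $r>0$, that the paper relies on, and this creates a genuine gap.

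There are two related problems at the endpoint $s=T$. First, assumption \A{H1}(ii)(a) only asserts $\bar P^z_t g\in\mathcal{D}$ for $t>0$; at $s=T$ you would need $g\in\mathcal{D}$, which is false in the examples (a generic $g\in\mathcal{C}^\infty_b$ does not satisfy the transmission condition), so the chain rule of \A{H1}(ii)(b) does not apply on all of $[0,T]$. Second, and more seriously, your dominated-convergence step for the time integral uses $\theta^*_{T-s}$ as dominator, but \A{H1}(iv) only gives $\int\theta^*_t(x,y)\,\nu(x,dy)\le Ct^{\bar\zeta}$ with $\bar\zeta\in\mathbb{R}$ unrestricted; in both of the paper's examples $\bar\zeta=-1$, so $\int_0^T(T-s)^{\bar\zeta}\,ds$ diverges and your DCT argument fails. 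The paper's $r$-shift cures both issues simultaneously: with $\bar P^z_{T-s+r}$ one stays in $\mathcal{D}$ for all $s\in[0,T]$, and the dominator becomes $C(T-s+r)^{\bar\zeta}$, integrable on $[0,T]$. One first passes to the limit in the localization (your mesh, the paper's $\varepsilon$) for fixed $r>0$, obtaining the $\hat{\,}$-quantities with the shift, and only then lets $r\downarrow 0$ using the \emph{improved} smoothing bound $\int|\hat\theta_t|\,\nu\le Ct^\zeta$ with $\zeta>-1$. Your argument is easily repaired by inserting this regularization.
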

\begin{proof}
Let $f\in \mathcal{C}^{\infty}_b(\mathcal{J})$. For $t\in [0,T]$ and $r>0$, by assumption \A{H1} (ii) a) and b) applied to $(t,x) \mapsto h(t, x)= \bar{P}_{T-t+r} f(x)\in \mathcal{C}^1([0,T],\mathcal{D})$, there exists a continuous martingale $(\bar{M}_t)_{0\leq t  < T}$ starting at $0$ such that
\begin{align*}
\bar P_{T-t+r}f(Y_t) & = \bar P_{T+r}f(x) + \int^t_0 (\partial_s + \mathcal{L})\bar P_{T-s+r}f(Y_s)\, ds + \bar M_t\\
   				   & = \int f(y) \bar p_{T+r}^z(x, y) \nu(x,dy) + \int^t_0 \int f(y)\bar \theta^{z}_{T-s+r}(Y_s, y)\nu(Y_s,dy)\, ds + \bar M_t.
\end{align*}

We now proceed to the {\it diagonalisation argument}, that is the argument that allows one to select the freezing point $z$ according to the measure $\nu(x, dy)$. We consider a sequence of non-negative mollifiers $\delta^{z}_\varepsilon, \, \varepsilon>0$, such that $\delta^{z}_\varepsilon \leq C_\varepsilon$ and $(\delta^{z}_\varepsilon)_{\varepsilon>0}$ converges weakly to the Dirac mass at $z$ as $\varepsilon \rightarrow 0$. For $g\in\mathcal{C}^{\infty}_b(\mathcal{J})$, we apply the above decomposition for $f = \delta^{z}_\varepsilon g$ and take expectations. We obtain
$$
P_t \bar P_{T-t+r}\delta^{z}_\varepsilon g(x) = \int \delta^{z}_\varepsilon(y) g(y) \bar p^z_{T+r}(x, y) \nu(x, dy) + \int^t_0 \int \delta^{z}_\varepsilon(y) g(y) \mathbb{E}[ \bar \theta^{z}_{T-s+r}(Y_s, y)\nu(Y_s,dy)]\, ds.
$$

\noindent We let $t\rightarrow T$ and integrate with respect to $z$, by continuity, we get
\begin{equation}
\label{eq:before:limit}
\int P_T \bar P_{r}\delta^{z}_\varepsilon g(x) dz = \int \int \delta^{z}_\varepsilon(y) g(y) \bar p^z_{T+r}(x, y) \nu(x,dy) dz + \int^T_0 \int \mathbb{E}[ \int \delta^{z}_\varepsilon(y) g(y) \bar \theta^{z}_{T-s+r}(Y_s, y)\nu(Y_s,dy)\,]dz \, ds
\end{equation}

We pass to the limit as $\varepsilon \rightarrow 0$ and then let $r\rightarrow 0$ in \eqref{eq:before:limit}.
 We first give some useful estimates. By using  \A{H1} (iv) and \eqref{hypiv}, we have
\begin{align}
|\int  \delta^{z}_\varepsilon(y) \bar p^{z}_{r}(Y_T,y)dz|&  \leq  p^*_r(Y_T,y), \label{t3e1}\\
|(\mathcal{L} - \mathcal{\bar L}) \bar  P_{t} \delta^{z}_\varepsilon g(x)| & \leq C_\varepsilon |g|_\infty t^{\bar \zeta}. \label{t3e2}
\end{align}
Let us consider the left-hand side of \eqref{eq:before:limit}. From \eqref{t3e1}, we can apply Fubini's theorem to obtain
\begin{align}
\int \mathbb{E}[\bar P_{r} \delta^{z}_\varepsilon g(Y_T)] dz 
& = \mathbb{E}[ \int  \int  \delta^{z}_\varepsilon(y) g(y) \bar p^{z}_{r}(Y_T,y)\nu(Y_T,dy) dz] \label{t2e1}\\
& = \mathbb{E}[ \int \left\{\int  \delta^{z}_\varepsilon(y) \bar p^{z}_{r}(Y_T,y)dz \right\}g(y) \nu(Y_T,dy)] \nonumber.
\end{align}

By a similar argument, one can apply dominated convergence theorem together with \A{H1} (iii) to obtain 
\bde
\lim_{\varepsilon \downarrow 0} \int \mathbb{E}[\bar P_{r}\delta^{z}_\varepsilon g(Y_T)] dz = \mathbb{E}[ \int \bar p^{y}_{r}(Y_T,y)g(y) \nu(Y_T,dy)] = \mathbb{E}[\hat P_{r} g(Y_T)].
\ede
Consequently, by letting $r\rightarrow 0$ and using \A{H1} (iii) and (v), we obtain again by the dominated convergence theorem $\lim_{r\downarrow 0}\mathbb{E}[\hat P_{r} g(Y_T)] = P_Tg(x)$. 

For the right-hand side of \eqref{eq:before:limit}. Again by \eqref{t3e1} we can apply Fubini's theorem and then the dominated convergence theorem while having in mind \A{H1} (iii). This yields
\begin{align}
\lim_{\varepsilon \downarrow 0} \int \bar P_{T+r} \delta^{z}_\varepsilon g(x)dz & = \lim_{\varepsilon \downarrow 0}\int \left\{\int \delta^{z}_\varepsilon(y) g(y) \bar p^{z}_{T+r}(x,y) dz \right\}\nu(x,dy) \label{t2e}\\
& = \int g(y) \bar p^y_{T+r}(x,y) \nu(x,dy) = \hat P_{T+r}g(x).\nonumber
\end{align}
Letting $r\rightarrow 0$, by \A{H1} (iii) we deduce from the continuity of $r\mapsto \hat{P}_r g(x)$ that $\lim_{r\downarrow 0} \hat{P}_{T+r} g(x) = \hat{P}_T g(x)$.
The second term on the right-hand side of \eqref{eq:before:limit} is computed similarly by using \A{H1} (ii). To pass to the limit as $\varepsilon\rightarrow 0$, we apply the dominated convergence theorem using \A{H1} (iv) and (iii) we obtain
\begin{align}
& \lim_{\varepsilon \downarrow 0} \int^T_0 \int \mathbb{E}[ \int \delta^{z}_\varepsilon(y) g(y)  \bar \theta^{z}_{T-s+r}(Y_s, y)\nu(Y_s,dy)]dz \, ds \label{t2e2}\\ 
&= \lim_{\varepsilon \downarrow 0} \mathbb{E}[\int^T_0 \int \left\{\int g(y)\delta^{z}_\varepsilon(y)\bar \theta^{z}_{T-s+r}(Y_s, y)  dz\right\} \nu(Y_s,dy)\, ds ] \nonumber \\
&= \mathbb{E}[\int^T_0 \int\lim_{\varepsilon \downarrow 0} \left\{\int g(y)\delta^{z}_\varepsilon(y)\bar \theta^{z}_{T-s+r}(Y_s,y)  dz\right\} \nu(Y_s,dy)\, ds]\nonumber \\
& = \mathbb{E}[\int^T_0  \int g(y)\bar \theta_{T-s+r}^{y}(Y_s,y)  \nu(Y_s,dy)\, ds]  = \mathbb{E}[\int^T_0  \mathcal{S}_{T-s+r} g(Y_s) ds ] \nonumber 
\end{align}

\noindent and to pass to the limit as $r\rightarrow 0$, we remark that \A{H1} (iv) ($\zeta>-1$) implies the continuity at $0$ of $r \mapsto \mathbb{E}[\int^T_0  \mathcal{S}_{T-s+r} g(Y_s) ds]$. The result is valid for $g\in \mathcal{C}^{\infty}_b(\rr^d)$ and an approximation argument completes the proof.
\end{proof}

We are not so far from obtaining a representation of $P_T g$ in infinite series. Once weak existence and uniqueness for the SDE \eqref{sde:dynamics} is established, this representation will be useful in order to derive the existence of a transition density for the process $Y$. We will also use it to derive some Gaussian upper-bound estimates for the density of the couple $(X_T, A_T(X))$. 
\vskip13pt   

\begin{corol}\label{corolllary:semigroup}
Assume that \A{H1} holds and that for any $g\in \mathcal{C}_b(\mathcal{J})$ and $t>0$, the function $x\mapsto \mathcal{S}_t g$ defined in Section \ref{perturb:section} belongs to $\mathcal{C}_b(\mathcal{J})$. Then one may iterate the first order formula in Theorem \ref{t1} to obtain 
\begin{align}\label{param:series}
P_T g(x) = \hat{P}_T g(x) + \sum_{n\geq1} I^n_T g(x)
\end{align}
\noindent with 
$$
I^{n}_T g(x) = \int_{\Delta_n(T)} d\s_n \hat{P}_{s_n} \mathcal{S}_{s_{n-1}-s_{n}} \cdots \mathcal{S}_{T-s_1}g(x).
$$
Moreover, the series \eqref{param:series} converges absolutely and uniformly for $x\in \mathcal{J}$.
\end{corol}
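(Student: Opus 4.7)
The natural strategy is to iterate Theorem~\ref{t1} and control the remainder. By induction on $N\geq 0$, I would establish the identity
\[
P_T g(x) \;=\; \sum_{n=0}^{N} I^n_T g(x) \;+\; R^{N+1}_T g(x),
\]
where (with the convention $I^0_T g := \hat P_T g$)
\[
R^{N+1}_T g(x) \;:=\; \int_{\Delta_{N+1}(T)} P_{s_{N+1}} \mathcal{S}_{s_N-s_{N+1}} \cdots \mathcal{S}_{T-s_1} g(x)\, d\s_{N+1}.
\]
The base case $N=0$ is precisely Theorem~\ref{t1}. For the inductive step, the standing hypothesis that $\mathcal{S}_t$ preserves $\mathcal{C}_b(\mathcal{J})$ ensures, by a finite iteration, that $\mathcal{S}_{s_N-s_{N+1}} \cdots \mathcal{S}_{T-s_1} g$ lies in $\mathcal{C}_b(\mathcal{J})$, so one may apply Theorem~\ref{t1} again to expand the innermost factor $P_{s_{N+1}}$ and reorganise the resulting time integrals into $I^{N+1}_T g + R^{N+2}_T g$.

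\textbf{The main estimate.} The smoothing property \eqref{hypiv} from \A{H1}~(iv) yields $|\mathcal{S}_t f|_\infty \leq C|f|_\infty t^\zeta$ with $\zeta>-1$, uniformly in $x\in\mathcal{J}$. Iterating this bound and using that $P_s$ is a contraction on $\mathcal{B}_b(\mathcal{J})$ (and that $\hat P_{s_n}$ has its operator norm controlled by $|\hat P_{s_n}1|_\infty$, itself estimated by applying Theorem~\ref{t1} to $g\equiv 1$) one obtains, with $s_0 := T$,
\[
|I^n_T g(x)|,\;|R^{N+1}_T g(x)|
\;\leq\; C'\, C^n \, |g|_\infty \int_{\Delta_n(T)} \prod_{i=1}^{n}(s_{i-1}-s_i)^\zeta\, d\s_n.
\]
The change of variables $u_i = s_{i-1}-s_i$ followed by the rescaling $u_i = T v_i$ reduces the remaining simplex integral to the classical Dirichlet form and gives
\[
\int_{\Delta_n(T)} \prod_{i=1}^{n}(s_{i-1}-s_i)^\zeta\, d\s_n \;=\; T^{n(1+\zeta)} \frac{\Gamma(1+\zeta)^n}{\Gamma(1+n(1+\zeta))}.
\]

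\textbf{Conclusion.} Because $\zeta>-1$, Stirling's formula shows that the gamma denominator grows super-geometrically in $n$, so the series $\sum_{n\geq 0} I^n_T g(x)$ converges absolutely and uniformly for $x\in\mathcal{J}$, and $|R^{N+1}_T g(x)| \to 0$ uniformly in $x$ as $N \to \infty$. Passing to the limit in the inductive identity then delivers the desired representation \eqref{param:series}. The only delicate point is the Dirichlet-type integral: the restriction $\zeta>-1$ is precisely what guarantees both the local integrability in time of the parametrix kernel at each level and a sufficient growth of $\Gamma(1+n(1+\zeta))$ to ensure the absolute and uniform convergence of the series; the rest of the argument is bookkeeping based on Theorem~\ref{t1}.
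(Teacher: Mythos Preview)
Your proposal is correct and follows essentially the same route as the paper: iterate Theorem~\ref{t1} using that $\mathcal{S}_t$ preserves $\mathcal{C}_b(\mathcal{J})$, bound the remainder via the smoothing estimate \eqref{hypiv}, evaluate the simplex integral as $T^{n(1+\zeta)}\Gamma(1+\zeta)^n/\Gamma(1+n(1+\zeta))$ (the paper invokes Lemma~\ref{beta:type:integral} for this), and conclude by the asymptotics of the Gamma function. Your treatment is in fact slightly more explicit than the paper's in one respect: you separately bound the terms $I^n_T g$ to obtain absolute and uniform convergence of the series, which requires controlling the operator norm of $\hat P_{s_n}$---a point the paper leaves implicit, since its displayed estimate concerns only the remainder $\mathscr{R}^N_T$ (where the innermost operator is the contraction $P_{s_N}$).
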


\begin{proof}
We remark that since for all $t>0$, $x \mapsto \mathcal{S}_{t}g(x) \in \mathcal{C}_b(\mathcal{J})$, we can iterate the first order expansion in Theorem \ref{t1} to obtain
$$
P_Tg(x) = \hat{P}_T g(x) + \sum_{n=1}^{N-1} \int_{\Delta_n(T)} d\s_n \hat{P}_{s_n} \mathcal{S}_{s_{n-1}-s_n}\cdots \mathcal{S}_{T-s_1}g(x) + \mathscr{R}^{N}_{T}g(x)
$$

\noindent where the remainder term is given by
$$
\mathscr{R}^{N}_Tg(x) = \int_{\Delta_N(T)} d\s_N P_{s_N} \mathcal{S}_{s_{N-1}-s_N}\cdots \mathcal{S}_{T-s_1}g(x).
$$
From iterative application of estimate \eqref{hypiv}, the remainder term is bounded by
\begin{align*}
|\mathscr{R}^N_T g(x)| & \leq C_T |g|_{\infty} \int_{\Delta_N(T)} d\s_N \prod_{n=0}^{N-1} C (s_{n}-s_{n-1})^{\zeta} \leq C^{N}_T T^{N(1+\zeta)} \frac{\Gamma(1+\zeta)^{N}}{\Gamma(1+ N(1+\zeta))}
\end{align*}

\noindent where $\zeta \mapsto \Gamma(\zeta)$ is the Gamma function. 

By Lemma \ref{beta:type:integral} and the asymptotics of the Gamma function at infinity, we clearly see that the remainder goes to zero uniformly in $x\in \mathcal{J}$ as $N\uparrow \infty$.
\end{proof}

\vskip13pt
\subsection{Weak Uniqueness} \rule[-10pt]{0pt}{10pt}\\
Throughout this section, we will assume that \A{H1} holds and prove weak uniqueness for the SDE \eqref{sde:dynamics}. The main argument is an extension of the technique introduced by Bass \& Perkins \cite{bass:perk:09}, which allows us to deal with singular probability law in the sense that the law of $Y_t$, $t>0$ may not be absolutely continuous with respect to the Lebesgue measure on $\rr^d$. Moreover, the new contribution of this section compared to the existing literature on this topic is that we identify the main assumptions \A{H1} and \A{H2} needed to establish weak uniqueness, thus allowing for a general treatment.    

We consider two weak solutions of the SDE \eqref{sde:dynamics} starting at time $0$ from the same initial point $x\in \mathcal{J}$. Denote by $\P_1$ and $\P_2$ the two probability measures induced on the space $(\mathcal{C}([0,\infty),\mathcal{J}), \, \mathcal{B}(\mathcal{C}([0,\infty),\mathcal{J})))$. Define $f\in \mathcal{B}_b(\mathcal{J})$ and $\lambda >0$ 
\begin{align*}
S^i_\lambda f(x) & := \mathbb{E}_i [\int^\infty_0  e^{- \lambda t} f(Y_t) dt] = \int^\infty_0 e^{- \lambda t} \mathbb{E}_i [f(Y_t)]\,dt, i=1,2,\, \quad S^\Delta _\lambda f(x)  := (S^1_\lambda - S^2_\lambda) f(x)\\
\|S^\Delta_\lambda\| & := \sup_{\|f\|_\infty \leq 1} |\mathcal{S}^{\Delta}_\lambda f|.
\end{align*}

 We notice that by \A{H1} (ii) b),
\be\label{res}
S^{i}_\lambda (\lambda - 	\mathcal{L}) f(x) = f(x), \quad \forall f\in \mathcal{D}, \, i=1,2.
\ee 
For $z\in \mathcal{J}$, the resolvent of the process with frozen coefficients is defined by
\bde
 \bar{R}_\lambda f(x) = \int^\infty_0 e^{-\lambda t} \bar{P}_tf(x) dt, \quad \quad \forall f\in \mathcal{B}_b(\mathcal{J})
\ede 
\noindent and for $f\in \mathcal{D}$ one has 
\be\label{res:proxy}
 \bar{R}_\lambda (\lambda- \bar{\mathcal{L}})f = f.
\ee
We make the following assumptions:\\

\noindent {\bf Assumption (H2)}: For all $\lambda>0$, one has $\bar R_\lambda \mathcal{D} \subset \mathcal{D} $ and for $f\in \mathcal{D}$, 
\bde
(\lambda - \mathcal{\bar  L})\bar R_\lambda f= \bar{R}_{\lambda} (\lambda-\bar{\mathcal{L}})f \quad  and \quad  (\mathcal{L}- \mathcal{\bar L})  \bar R_\lambda f(x)
 = \int^\infty_0 e^{-\lambda t}(\mathcal{L} - \mathcal{\bar L}) \bar  P_{t } f(x)  \,dt.
\ede

Let $z\in \rr^d$ and $r>0$. We consider a sequence of non-negative mollifiers $\delta^{z}_\varepsilon, \, \varepsilon>0$, converging to the Dirac mass at $z$ as $\varepsilon \rightarrow 0$. Let us first observe that if $g\in \mathcal{C}^{\infty}_b(\mathcal{J})$ then $\bar P_r g\in \mathcal{D}$, and by \A{H2} and \eqref{res:proxy}, one has
\begin{align}
(\lambda - \mathcal{L})\bar R_\lambda \bar P_r  \delta^z_\varepsilon g(x) &=
(\lambda - \mathcal{\bar  L})\bar R_\lambda \bar P_r  \delta^z_\varepsilon g(x) - (\mathcal{L}- \mathcal{\bar L})\bar R_\lambda \bar P_r  \delta^z_\varepsilon g(x)\nonumber\\ 
& =   \bar P_r \delta^z_\varepsilon g(x) - (\mathcal{L}- \mathcal{\bar L})  \bar R_\lambda \bar P_r  \delta^z_\varepsilon g(x).\label{LR1}
\end{align}
Note that the second term appearing in the right-hand side of the above equality can be expressed as
\be
(\mathcal{L}- \mathcal{\bar L})  \bar R_\lambda \bar P_r  \delta^z_\varepsilon g(x)
 = \int^\infty_0 e^{-\lambda t}(\mathcal{L} - \mathcal{\bar L}) \bar  P_{t + r}  \delta^z_\varepsilon g(x)  \,dt. \label{LR2}
\ee 
We are now ready to prove weak uniqueness for \eqref{sde:dynamics}. 
\begin{theorem}\label{weak:uniqueness:result}
Assume \A{H1} and \A{H2} are satisfied, then weak uniqueness holds for the SDE \eqref{sde:dynamics}.
\end{theorem}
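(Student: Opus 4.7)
The goal is to show that the resolvents $S^{1}_{\lambda}$ and $S^{2}_{\lambda}$ agree on $\mathcal{B}_b(\mathcal{J})$ for all sufficiently large $\lambda>0$, since this forces $\mathbb{E}_1[f(Y_t)]=\mathbb{E}_2[f(Y_t)]$ for every continuous bounded $f$ and every $t\geq 0$ (by Laplace transform injectivity and continuity in $t$), hence equality of one-dimensional marginals; weak uniqueness of all finite-dimensional distributions then follows from the substitute Markov property discussed right after the introduction of $\mathbb{P}_{t,w}$ in Section~\ref{perturb:section}.

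\textbf{Step 1: Resolvent identity on test functions.} Fix $g\in\mathcal{C}^{\infty}_b(\mathcal{J})$, $z\in\mathcal{J}$, $r>0$, $\varepsilon>0$ and consider $\varphi^{z}_{\varepsilon,r}:=\bar R^{z}_{\lambda}\bar P^{z}_{r}\delta^{z}_{\varepsilon}g$. By \A{H1}(ii)(a), $\bar P^{z}_{r}\delta^{z}_{\varepsilon}g\in\mathcal{D}$, and by \A{H2}, $\varphi^{z}_{\varepsilon,r}\in\mathcal{D}$ as well. Applying \eqref{res} for $i=1,2$ and subtracting gives
\begin{equation*}
S^{\Delta}_{\lambda}(\lambda-\mathcal{L})\varphi^{z}_{\varepsilon,r}(x)=0.
\end{equation*}
Using \eqref{LR1} this rewrites as
\begin{equation*}
S^{\Delta}_{\lambda}\bar P^{z}_{r}\delta^{z}_{\varepsilon}g(x)=S^{\Delta}_{\lambda}\bigl[(\mathcal{L}-\bar{\mathcal{L}}^{z})\bar R^{z}_{\lambda}\bar P^{z}_{r}\delta^{z}_{\varepsilon}g\bigr](x),
\end{equation*}
and combining with \eqref{LR2} and \A{H1}(ii)(c) yields an explicit integral kernel on the right: $(\mathcal{L}-\bar{\mathcal{L}}^{z})\bar R^{z}_{\lambda}\bar P^{z}_{r}\delta^{z}_{\varepsilon}g(x)=\int_{0}^{\infty}e^{-\lambda t}\!\int \delta^{z}_{\varepsilon}(y)g(y)\bar\theta^{z}_{t+r}(x,y)\nu(x,dy)\,dt$.

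\textbf{Step 2: Diagonalisation.} Integrate both sides of the identity of Step~1 over $z$ and perform the same mollification-and-pass-to-the-limit argument as in the proof of Theorem~\ref{t1}: bring $\int\!dz$ and the expectations inside using \A{H1}(iii)-(iv) (Fubini is justified by the domination $p^{*}_{t}$, $\theta^{*}_{t}$), let $\varepsilon\downarrow 0$ so that $\delta^{z}_{\varepsilon}$ concentrates on $z=y$, and then let $r\downarrow 0$ using \A{H1}(v) on the left and the integrability $\zeta>-1$ of $\theta^{*}$ near $0$ on the right. This produces the clean identity
\begin{equation*}
S^{\Delta}_{\lambda}g(x)=S^{\Delta}_{\lambda}\mathcal{R}_{\lambda}g(x),\qquad \mathcal{R}_{\lambda}g(x):=\int_{0}^{\infty}e^{-\lambda t}\mathcal{S}_{t}g(x)\,dt,
\end{equation*}
valid first for $g\in\mathcal{C}^{\infty}_b(\mathcal{J})$ and then, by the density/approximation argument sketched at the end of Theorem~\ref{t1}, for every $g\in\mathcal{B}_b(\mathcal{J})$.

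\textbf{Step 3: Smoothing estimate and contraction.} The smoothing property \eqref{hypiv} gives $|\mathcal{S}_{t}g(x)|\leq C t^{\zeta}|g|_{\infty}$, so
\begin{equation*}
|\mathcal{R}_{\lambda}g|_{\infty}\leq C\,|g|_{\infty}\int_{0}^{\infty}e^{-\lambda t}t^{\zeta}\,dt=C\,\Gamma(1+\zeta)\,\lambda^{-(1+\zeta)}|g|_{\infty},
\end{equation*}
where the integral is finite thanks to $\zeta>-1$. Combining with Step~2 and the trivial bound $\|S^{\Delta}_{\lambda}\|\leq 2/\lambda$, we obtain $\|S^{\Delta}_{\lambda}\|\leq C\Gamma(1+\zeta)\lambda^{-(1+\zeta)}\|S^{\Delta}_{\lambda}\|$. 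Choosing $\lambda$ large enough that $C\Gamma(1+\zeta)\lambda^{-(1+\zeta)}<1$ forces $\|S^{\Delta}_{\lambda}\|=0$, hence $S^{1}_{\lambda}=S^{2}_{\lambda}$ for all such $\lambda$. Laplace inversion and the reduction to finite-dimensional distributions recalled at the start complete the proof.

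\textbf{Main obstacle.} The only delicate point is the diagonalisation in Step~2: one must legitimately interchange the $z$-integration, the mollifying limit $\varepsilon\downarrow 0$, the $r\downarrow 0$ limit, the expectations $\mathbb{E}_{i}$, and the $t$-integration arising from $\bar R^{z}_{\lambda}$. This requires carefully invoking \A{H1}(iii)-(iv)-(v) (continuity in $z$ and the uniform dominations $p^{*}_{t}$, $\theta^{*}_{t}$, together with the smoothing exponent $\zeta>-1$ ensuring local integrability at $t=0$) exactly as in the proof of Theorem~\ref{t1}, and using \A{H2} to move $\bar{\mathcal{L}}^{z}$ through $\bar R^{z}_{\lambda}$; once these limit exchanges are justified the algebraic contraction is immediate.
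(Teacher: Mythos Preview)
Your proof is correct and follows essentially the same approach as the paper: apply \eqref{res} to $\bar R^{z}_{\lambda}\bar P^{z}_{r}\delta^{z}_{\varepsilon}g\in\mathcal{D}$, use \eqref{LR1}--\eqref{LR2} to expose the perturbation kernel, integrate over $z$ and pass to the limits $\varepsilon\downarrow0$, $r\downarrow0$ via the domination and continuity assumptions of \A{H1}(iii)--(v), then use the smoothing bound \eqref{hypiv} to make $\mathcal{R}_{\lambda}$ a strict contraction for $\lambda$ large. The only cosmetic difference is that the paper integrates in $z$ \emph{before} applying $S^{\Delta}_{\lambda}$ and then uses Fubini to swap them, whereas you apply \eqref{res} pointwise in $z$ first; these are equivalent once the Fubini justification (via \eqref{t3e1}--\eqref{t3e2}) is in place.
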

\begin{proof}
The first part of the proof is similar to that of \autoref{t1}. We integrate both sides of \eqref{LR1} with respect to $dz$, apply $S^\Delta_\lambda$ and then pass the limit as $\varepsilon, r\rightarrow 0$.
\noindent For $i=1,2$, using estimates \eqref{t3e1} and \eqref{t3e2} we can apply Fubini's theorem. This yields
\begin{align*}
S_\lambda^i \int (\lambda - \mathcal{L})\bar R^{z}_\lambda \bar P^{z}_r  \delta^{z}_\varepsilon g \, dz &= \int^\infty_0 e^{-\lambda t} \mathbb{E}_i[\int (\lambda - \mathcal{L})\bar R^{z}_\lambda \bar P^{z}_r  \delta^{z}_\varepsilon g(Y_t) dz] dt= \int S_{\lambda}^i(\lambda - \mathcal{L})\bar R^{z}_\lambda \bar P^{z}_r  \delta^{z}_\varepsilon g dz.
\end{align*}

Then by using the fact that $\bar R_\lambda \bar P_r  \delta_\varepsilon g \in \mathcal{D}$ and \A{H2}, we deduce
\bde
S^\Delta_\lambda (\int  \bar P_r^{z} \delta_\varepsilon^{z} g \,dz) - S^\Delta_\lambda (\int (\mathcal{L}- \mathcal{\bar L}^{z})  \bar R_\lambda^{z} \bar P_r^{z}  \delta_\varepsilon^{z} g\, dz) = 0.
\ede

Let us consider the first term in the above expression and take the limit as $\varepsilon \rightarrow 0$ and then let $r \rightarrow 0$. For $i=1,2$, under \A{H1}, the limit in $\varepsilon$ can be taken using dominated convergence theorem while the limit as $r\rightarrow 0$ follows using \A{H1} (v) (similar to \eqref{t2e}),
\begin{align*}
\lim_{r\downarrow 0} \lim_{\varepsilon\downarrow 0}\int^\infty_0 e^{-\lambda t}\mathbb{E}_i[\int \bar P^{z}_r \delta^{z}_\varepsilon g(Y_t) dz ] dt 
& = \lim_{r\downarrow 0} \int^\infty_0 e^{-\lambda t}\mathbb{E}_i[\hat P_rg(Y_t)] dt = \int^\infty_0 e^{-\lambda t}\mathbb{E}_i[g(Y_t)] dt.
\end{align*}
This shows that $\lim_{r,\varepsilon \downarrow 0} S^\Delta_\lambda (\int  \bar P_s^{z}\delta_\varepsilon^{z} g \,dz)(x) = S^\Delta_\lambda g(x)$. For the second term, we first rewrite it using \eqref{LR2}. For $i=1,2$, since $r>0$, the Fubini's theorem can be applied due to \eqref{t3e2}. Next, from \A{H1} (v), the dominated convergence theorem can be applied to pass to the limit in $\varepsilon$,
\begin{align*}
& \lim_{\varepsilon\downarrow 0}   \int^\infty_0 e^{-\lambda u}\mathbb{E}_i[\int\int e^{-\lambda t}  (\mathcal{L} - \mathcal{\bar L}^{z}) \bar  P^{z}_{t +r} \delta^{z}_\varepsilon g(Y_u) \,dt dz ] du\\
& =   \int^\infty_0 e^{-\lambda u}\int^\infty_0 e^{-\lambda t}  \lim_{\varepsilon\downarrow 0}  \mathbb{E}_i[ \int (\mathcal{L} - \mathcal{\bar L}^{z}) \bar  P^{z}_{t + r} \delta^{z}_\varepsilon g(Y_u) dz] \,dt du\\
& =\int^\infty_0 e^{-\lambda u} \mathbb{E}_i[\int^\infty_0 e^{-\lambda t}  \mathcal{S}_{t+r} g(Y_u) dt] du.
\end{align*}

\noindent where the last equality follows from similar arguments as those employed in \eqref{t2e2}. One can now let $r$ goes to zero by using estimates in \A{H1} (iv) or \eqref{hypiv} to obtain
\begin{align*}
\lim_{r\downarrow 0}\int^\infty_0 e^{-\lambda u} \mathbb{E}_i[\int^\infty_0 e^{-\lambda t}  \mathcal{S}_{t+r} g(Y_u) dt\,] du  = \int^\infty_0 e^{-\lambda u} \mathbb{E}_i[\int^\infty_0 e^{-\lambda t}  \mathcal{S}_{t} g(Y_u) dt\,] du.
\end{align*}

By putting the two terms together, we obtain
\begin{align*}
S^\Delta_\lambda g = S^\Delta_\lambda (\int^\infty_0 e^{-\lambda t}  \mathcal{S}_{t} g \,dt).
\end{align*}

\noindent and one can pick $\lambda$ such that 
\bde
| \int^\infty_0 e^{-\lambda t}  \mathcal{S}_{t} g dt|  \leq  |g|_{\infty}  \int^\infty_0 e^{-\lambda t}  t^\zeta dt = |g|_{\infty} \frac{\Gamma(\zeta)}{\lambda^{1+\zeta}} < \frac{1}{2}|g|_{\infty},
\ede 

From the above computation and the definition of $\|S^{\Delta}_{\lambda}\|$, we find that 
\bde
|S^\Delta_\lambda g | = |S^\Delta_\lambda (\int^\infty_0 e^{-\lambda t}  \mathcal{S}_{t} g \,dt)| \leq \frac{1}{2} \|S^\Delta_\lambda\| |g|_\infty,
\ede

By an approximation argument, the last inequality remains valid for bounded continuous functions $g$ supported in $\mathcal{J}$ and, by a monotone class argument, it extends to bounded measurable functions. Taking the supremum over $|g|_\infty \leq 1$ yields $\|S^\Delta_\lambda\| \leq \frac{1}{2}\|S^\Delta_\lambda\|$ and, since $\|S^{\Delta}_\lambda \|< \infty$, we conclude that $S^\Delta_\lambda = 0$. Consequently, $\int_0^{\infty} e^{-\lambda t} \E_1[g(Y_t)] dt = \int_0^{\infty} e^{-\lambda t} \E_2[g(Y_t)] dt$. By the uniqueness of the Laplace transform together with continuity w.r.t the variable $t$, $ \E_1[g(Y_t)] =  \E_2[g(Y_t)]$ for all $t\geq0$ if $g$ is bounded measurable. 

Now one can use the standard argument based on regular conditional probabilities to show that the finite dimensional distributions of the process $(Y_t)_{t\geq 0} =(X_t,A_t(X))_{t\geq0}$ agree under $\P_1$ and $\P_2$. This is where we use the assumption on regular conditional probability measure introduced in the first paragraph of Section \ref{perturb:section}. Since this arguments is standard, we omit it. This suffices to prove weak uniqueness, see \cite{stro:vara:79}, Section 5.4.C in \cite{kara:shre:91} or Section VI.2 in \cite{bass:97}.
\end{proof}
\vskip10pt

\section{Two examples}\label{examples}\vskip5pt

\subsection{A diffusion process and its running symmetric local time} $\,$\vskip5pt
In this example, we consider the SDE with dynamics
\begin{equation}
\label{sde:dynamics:local:time}
X_t = x + \int_0^t b(X_s, L^{0}_s(X)) ds + \int_0^t \sigma(X_s,L^{0}_s(X)) dW_s
\end{equation}

\noindent where $L^{0}_s(X)$ is the symmetric local time at $0$ accumulated by $X$ at time $s$. Here $\mathcal{J} = \rr\times \rr_+$, $A_t(X)=L^{0}_t(X)$ and $d=2$. We introduce the following assumptions:
\bigskip
\begin{itemize}
\item[\A{R-$\eta$}] The coefficients $b$ and $a=\sigma^2$ are bounded measurable functions defined on $\rr \times \rr_+$. The diffusion coefficient $a$ is $\eta$-H\"older continuous on $\rr \times \rr_+$.  

\item[\A{UE}] There exists some constant $\underline{a}>0$ such that $\forall (x,\ell) \in \rr \times \rr_+$, $\underline{a} \leq a(x,\ell)$.

\end{itemize}

\bigskip 
Let $\mathcal{D}$ be the class of function $f \in \mathcal{C}^{2,1}_b(\rr\backslash{\left\{0\right\}} \times \rr_+) \cap \mathcal{C}_b(\rr\times \rr_+)$ such that $\partial_1f(0+,\ell)= \lim_{x\downarrow 0} \frac{f(x,\ell)-f(0,\ell)}{x}$ and $\partial_1 f(0-,\ell) = \lim_{x\uparrow 0} \frac{f(x,\ell) - f(0,\ell)}{x}$ exist, are finite and satisfy the following transmission condition:
\begin{equation}
\label{transmission:condition}
\forall \ell \in \rr_+, \quad \frac{\partial_1 f(0+,\ell) - \partial_1 f(0-,\ell)}{2} + \partial_2 f(0,\ell) = 0.
\end{equation}
We define the linear operator $\mathcal{L}$ by: 
\begin{align*}
\mathcal{L}f(x,\ell) & = b(x,\ell) \partial_1 f(x-,\ell) + \frac12 a(x,\ell) \partial^2_1 f(x-,\ell), \quad (x,\ell) \in \rr  \times \rr_+.
\end{align*}


As mentioned in the introduction, we need a chain rule formula for the process $(X_t,A_t(X))_{t\geq0}$ which allows to identify the set of functions $\mathcal{D}$ and the linear operator $\mathcal{L}$. In fact, the set $\mathcal{D}$ described above is precisely the set of functions for which we are able to provide a good characterisation of the martingale problem. Indeed, one has to rely on the following chain rule formula or generalisation of the It\^o formula whose proof closely follows the arguments of Theorem 2.2 in Elworthy \& al. \cite{Elworthy:Truman:Zhao} or Theorem 2.1 in Peskir \cite{Peskir}. Note that here that we are working with the symmetric local time at zero whereas the right local time is considered in \cite{Elworthy:Truman:Zhao}.

\begin{prop}[Generalised It\^o's formula] \label{generalized:ito:lemma}Assume that $f\in \mathcal{C}^{1,2,1}(\rr_+\times \rr\backslash \left\{0\right\} \times \rr_+) \cap \mathcal{C}(\rr_+\times \rr \times \rr_+)$ satisfies: $\partial_2 f(t,0+,\ell) = \lim_{x\downarrow 0} (f(t,x,\ell)-f(t,0,\ell))/x$ and $\partial_2 f(t,0-,\ell) = \lim_{x\uparrow 0} (f(t,x,\ell)-f(t,0,\ell))/x$ exist and are finite. Then, one has
\begin{align*}
f(t,X_t, L^{0}_t(X)) & = f(0,x,0) + \int_0^t \left\{\partial_1 f(s,X_s, L^{0}_s(X)) + \mathcal{L}f(s,.)(X_s,L^{0}_s(X)) \right\} ds \\
& \quad + \int_0^t \left\{\frac{\partial_2 f(s,0+,L^{0}_s(X))-\partial_2f(s,0-,L^{0}_s(X))}{2} + \partial_3 f(s,0,L^{0}_s(X)) \right\} dL^{0}_s(X)  \\
& \quad + \int_0^t \sigma(X_s,L^{0}_s(X)) \partial_2 f(s,X_s -,L^{0}_s(X)) dW_s \quad a.s.
\end{align*}
\end{prop}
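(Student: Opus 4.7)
The plan is to follow closely the classical derivation of generalised It\^o formulas in the presence of local time, as in Theorem~2.2 of \cite{Elworthy:Truman:Zhao} or Theorem~2.1 of \cite{Peskir}, with the additional (mild) ingredient that $L^0_\cdot(X)$ contributes only as a continuous process of bounded variation with $\langle X, L^0\rangle \equiv 0$. The three steps are: (i) smooth the kink of $f$ at $x=0$ in the space variable using a symmetric mollifier $\rho_\varepsilon$ supported in $[-\varepsilon,\varepsilon]$ to obtain a globally $\mathcal{C}^{1,2,1}$ approximation $f_\varepsilon(t,x,\ell) = \int_{\rr} f(t, x-y, \ell)\rho_\varepsilon(y) dy$; (ii) apply the classical multidimensional It\^o formula to $f_\varepsilon(t,X_t, L^0_t(X))$; (iii) pass to the limit $\varepsilon \downarrow 0$, identifying the Dirac-like contribution of $\partial_2^2 f_\varepsilon$ near $x=0$ as a local time term with the prescribed symmetric weighting $\tfrac12(\partial_2 f(s, 0+, \ell) - \partial_2 f(s, 0-, \ell))$.

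For step (ii), the standard It\^o formula yields
\begin{align*}
f_\varepsilon(t, X_t, L^0_t) &= f_\varepsilon(0,x,0) + \int_0^t \bigl\{\partial_1 f_\varepsilon + b\partial_2 f_\varepsilon + \tfrac12 a \partial_2^2 f_\varepsilon\bigr\}(s, X_s, L^0_s) ds \\
&\quad + \int_0^t \partial_3 f_\varepsilon(s, X_s, L^0_s) dL^0_s(X) + \int_0^t \sigma(X_s,L^0_s)\partial_2 f_\varepsilon(s, X_s, L^0_s) dW_s.
\end{align*}
For step (iii), every term except the second-derivative one converges straightforwardly: $f_\varepsilon \to f$ pointwise by continuity; the $b\partial_2 f_\varepsilon$ part of the drift and the stochastic integral converge (in $L^1$ and $L^2$ respectively) by dominated convergence, using that the set $\{s\leq t : X_s = 0\}$ has Lebesgue measure zero almost surely (a consequence of \A{UE} and the occupation times formula); and $\int_0^t \partial_3 f_\varepsilon\,dL^0_s \to \int_0^t \partial_3 f(s, 0, L^0_s) dL^0_s(X)$ since $dL^0$ is supported on $\{X_s = 0\}$ and $f$ is continuous across $x=0$.

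The main obstacle is identifying the limit of the second-derivative piece $\tfrac12 \int_0^t a(X_s,L^0_s) \partial_2^2 f_\varepsilon(s,X_s,L^0_s) ds$. Splitting according to $\{|X_s| > \varepsilon\}$ versus $\{|X_s| \leq \varepsilon\}$, the first part converges to $\tfrac12 \int_0^t a(X_s,L^0_s) \partial_2^2 f(s, X_s-, L^0_s) ds$, which is exactly the $\partial_2^2$ contribution in $\mathcal{L}f$. On $\{|X_s|\leq \varepsilon\}$, the occupation times formula rewrites the piece as $\tfrac12\int_{-\varepsilon}^{\varepsilon}\int_0^t \partial_2^2 f_\varepsilon(s, a, L^0_s)\, dL^a_s(X)\, da$; after an integration by parts in the space variable $a$ together with the symmetry of $\rho_\varepsilon$ and the continuity of $a \mapsto L^a_s$ at $a=0$ in the symmetric sense, this converges to $\tfrac12\int_0^t \bigl(\partial_2 f(s,0+,L^0_s) - \partial_2 f(s,0-,L^0_s)\bigr) dL^0_s(X)$. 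The factor $\tfrac12$ is dictated by the symmetric choice of local time and of the mollifier, in contrast to the right local time variant of \cite{Elworthy:Truman:Zhao}. Assembling all pieces gives the announced formula.
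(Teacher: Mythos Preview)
Your proposal is correct and follows exactly the route the paper indicates: the paper does not give a detailed proof but explicitly defers to Theorem~2.2 of \cite{Elworthy:Truman:Zhao} and Theorem~2.1 of \cite{Peskir}, noting only that here one works with the \emph{symmetric} local time rather than the right local time. Your three-step scheme (mollify in the space variable, apply the classical It\^o formula, pass to the limit via the occupation times formula and identify the jump of $\partial_2 f$ at $0$ as a $dL^0$-term with the symmetric weighting $\tfrac12$) is precisely the argument those references carry out, and you have correctly flagged the one point where the symmetric convention matters.
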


\subsection{Weak Existence} $\,$\vskip5pt
Now that we have identified the set $\mathcal{D}$ and the linear operator $\mathcal{L}$, the weak existence of a solution to \eqref{sde:dynamics:local:time} follows from a standard compactness argument that we present here for sake of completeness. Actually, it is equivalent to the existence of a solution to the following martingale problem. 

We will say that a probability measure $P$ on $\left( \mathcal{C}([0,\infty),\rr\times \rr_+), \mathcal{B}(\mathcal{C}([0,\infty),\rr\times \rr_+ )\right)$ endowed with the canonical filtration $(\mathcal{F}_t)_{t\geq0}$ is a solution to the local martingale problem if $P(y_1(0)= x, y_2(0)=0) = 1$, $t\mapsto y_2(t)$ is a non-decreasing process $P$-$a.s.$ and
\begin{equation}
\label{martingale:characterisation}
f(y(t)) - f(y(0)) - \int_0^t \mathcal{L}f(y(s)) ds 
\end{equation}

\noindent is a continuous local martingale for $	f\in \mathcal{C}^{2,1}(\rr\backslash{\left\{0\right\}} \times \rr_+,\rr) \cap \mathcal{C}(\rr\times \rr_+)$ satisfying the transmission condition \eqref{transmission:condition}. We will say that $P$ is a solution to the martingale problem if \eqref{martingale:characterisation} is a continuous square integrable martingale for every $f\in \mathcal{D}$. Similarly to the standard diffusion case (see e.g. Proposition 5.4.11 in \cite{kara:shre:91}, since the coefficients $b$ and $\sigma$ are bounded, existence of a solution to the local martingale problem is equivalent to the existence of solution to the (non-local) martingale problem. 

We now claim that a probability measure that solves the local martingale problem induces a weak solution to the functional SDE \eqref{sde:dynamics:local:time}. Indeed, for the choices $(x,\ell) \mapsto x $ and $(x,\ell) \mapsto x^2$, following similar lines of proof of Proposition 5.4.6 in \cite{kara:shre:91}, one obtains that there exists a (one-dimensional) standard Brownian motion $W=\{ W_t, \tilde{\mathcal{F}}_t; 0\leq t < \infty \}$ eventually defined on an extension of the original probability space $(\mathcal{C}([0,\infty),\rr\times \rr_+), \mathcal{B}(\mathcal{C}([0,\infty),\rr\times \rr_+), P )$ such that
$$
y_1(t) = y_1(0)  + \int_0^t b(y(s)) ds + \int_0^t \sigma(y(s)) dW_s.
$$

We first consider $(x,\ell) \mapsto |x| - \ell \in \mathcal{C}^{2,1}( \rr\backslash \left\{0\right\} \times \rr_+) \cap \mathcal{C}(\rr \times \rr_+)$ and note that it satisfies the transmission condition \eqref{transmission:condition}. Hence, we derive that there exists a local martingale $M^1$ such that $|y_1(t)|-|x|- y_2(t) = \int_0^t b(y(s)) \mathrm{sign}(y_1(s)) ds + M^1_t$. Moreover, from the Tanaka formula $|y_1(t)|-|x| = \int_0^t \mathrm{sign}(y_1(s)) dy_1(s) + L_t^{0}(y_1)$. Hence, there exists a local martingale $M^2$ such that $L^{0}_t(y_1) - y_2(t) = M^2_t$. However, since $y_2$ is non-decreasing, this means that $M^2$ is a continuous local martingale of bounded variation. It follows that $M^2$ is identically equal to zero and $y_2(t)=L^{0}_t(y_1)$. Finally, we get that $\{y_1, W, \ (\tilde{\mathcal{F}}_t)_{t\geq 0}\}$ is a weak solution to the SDE \eqref{sde:dynamics:local:time}. Moreover, as in the standard diffusion case, see e.g. Chapter 6 in \cite{stro:vara:79} or Lemma 5.4.19 in \cite{kara:shre:91}, the measure $\P_{t, w}= \mathbb{Q}_{t, w} \circ \theta_t^{-1}$, where $\mathbb{Q}_{t,w}$ is a regular conditional probability for $P^x$ given $\mathcal{F}_t$, solves the martingale problem for every $w\notin N$, $N\in \mathcal{F}_t$ being a $P^x$-null event.  

We are now in position to prove the existence of a weak solution to the SDE \eqref{sde:dynamics:local:time}. The lines of reasoning here are standard, see e.g. Theorem 5.4.22 in \cite{kara:shre:91}. We provide them for sake of completeness.

\begin{theorem}\label{weak:existence:thm}Assume that the coefficients $b, \ \sigma: \rr \times \rr_+ \rightarrow \rr$ are bounded and continuous functions. Then, for every $x \in \rr$, there exists a weak solution to the SDE \eqref{sde:dynamics:local:time}. 
\end{theorem}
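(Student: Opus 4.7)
The plan is to follow the classical compactness / martingale-problem route of Karatzas-Shreve (Theorem 5.4.22), adapted to our functional coefficients. By the discussion preceding the statement, it is enough to produce a probability measure $P$ on $\mathcal{C}([0,\infty), \rr\times\rr_+)$ starting at $(x,0)$ which solves the local martingale problem associated with $\mathcal{L}$ and the transmission condition \eqref{transmission:condition}; the Tanaka-based argument recalled there then automatically furnishes a weak solution of \eqref{sde:dynamics:local:time}.

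I would first mollify the data, choosing $b_n, \sigma_n \in \mathcal{C}^\infty_b(\rr\times \rr_+)$ that converge locally uniformly to $(b,\sigma)$ with $\sup_n(|b_n|_\infty + |\sigma_n|_\infty) < \infty$, and construct, for each $n$, a weak solution $(X^n, W^n)$ of the SDE driven by $(b_n,\sigma_n)$. Existence at this regularised level is obtained by an Euler-type scheme: on a time grid of mesh $1/m$, freeze the coefficients at the values of $(\widetilde X^{n,m}_{k/m}, L^{0}_{k/m}(\widetilde X^{n,m}))$ on $[k/m, (k+1)/m)$. The sequence $(\widetilde X^{n,m})_m$ satisfies the same moment bounds as derived below, Prokhorov's theorem produces a weakly convergent subsequence, and the smoothness of $(b_n,\sigma_n)$ lets one identify the limit as a solution through a passage to the limit in the defining recursion.

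Next, I establish tightness of the laws $\mathbb{P}^n$ of $Y^n := (X^n, L^{0}(X^n))$ on $\mathcal{C}([0,\infty), \rr\times\rr_+)$. The uniform $L^\infty$-bounds on $(b_n,\sigma_n)$ combined with Burkholder-Davis-Gundy yield $\E[|X^n_t - X^n_s|^4] \leq C(t-s)^2$, while Tanaka's formula
\[
L^{0}_t(X^n) = |X^n_t| - |x| - \int_0^t \mathrm{sign}(X^n_s)\, dX^n_s
\]
gives the same estimate on $L^{0}(X^n)$. Kolmogorov's criterion together with $Y^n_0 = (x, 0)$ then yields tightness, and after extracting a subsequence $\mathbb{P}^n \Rightarrow P$, with $P(y_1(0) = x,\, y_2(0) = 0) = 1$ and $y_2$ non-decreasing $P$-a.s.

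Finally, identify $P$ as a solution to the local martingale problem. For any $f \in \mathcal{C}^{2,1}(\rr\setminus\{0\}\times\rr_+)\cap\mathcal{C}(\rr\times\rr_+)$ satisfying the transmission condition \eqref{transmission:condition}, Proposition \ref{generalized:ito:lemma} applied to $X^n$ shows that $N^{n,f}_t := f(Y^n_t) - f(x, 0) - \int_0^t \mathcal{L}_n f(Y^n_s)\, ds$ is a continuous local martingale under $\mathbb{P}^n$, where $\mathcal{L}_n$ is defined from $(b_n, a_n)$. Testing against bounded continuous functionals of $Y_{\cdot \wedge s}$, the locally uniform convergence $\mathcal{L}_n f \to \mathcal{L} f$ on $\rr\setminus\{0\}\times\rr_+$ together with $\mathbb{P}^n \Rightarrow P$ let one pass to the limit and conclude that the analogous process is a continuous local martingale under $P$. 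The main obstacle is that the local time is not a supremum-norm continuous functional of the path, which is why the identification $y_2 = L^{0}(y_1)$ cannot be done directly but must rely on the Tanaka / Doob-Meyer uniqueness argument recalled in the paragraph preceding the statement.
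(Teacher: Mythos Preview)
Your overall strategy---Euler-type approximation, tightness via Kolmogorov/BDG plus Tanaka for the local-time component, then identification of the limit through the martingale problem---is exactly the route the paper takes. However, the paper does it in a single step: it runs the Euler scheme directly with the original continuous bounded coefficients $(b,\sigma)$, defining $b^{(n)}(t,y):=b(y_{\psi_n(t)})$ with $\psi_n$ the dyadic floor, and then observes that on any compact subset of path space (where paths are uniformly bounded and equicontinuous) one has $b^{(n)}(t,y)\to b(y(t))$ uniformly, because $b$ is uniformly continuous on compacts of $\rr\times\rr_+$. This is precisely the convergence you obtain from mollification, so your outer layer is redundant.

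More to the point, your inner step---``the smoothness of $(b_n,\sigma_n)$ lets one identify the limit as a solution through a passage to the limit in the defining recursion''---is not justified as written and does not actually use smoothness. The obstruction to identifying the Euler limit is the discontinuity of the local-time functional, not any lack of regularity of the coefficients; smooth $(b_n,\sigma_n)$ do not give you a Lipschitz dependence of $L^0$ on the path, nor a strong solution. So what you would really do at this inner level is the same martingale-problem argument (Proposition \ref{generalized:ito:lemma} on the scheme, tightness, weak limit, pass $\mathcal{L}^{(n,m)}f\to\mathcal{L}_n f$ on compacts), which is exactly the paper's proof applied to $(b_n,\sigma_n)$ instead of $(b,\sigma)$. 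You then repeat the same argument a second time for the outer limit. Dropping the mollification and running the Euler scheme once, as the paper does, gives the result directly.
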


\begin{proof}
 Let us consider on some probability space $(\Omega, \mathcal{F}, \mathbb{P})$ a Brownian motion $W=\left\{W_t, \mathcal{F}^{W}_t, \ 0\leq t < \infty \right\}$ and let $\left\{ \mathcal{F}_t\right\}$ be the augmented filtration satisfying the usual conditions. For integers $j\geq0$, $n\geq1$ we consider the dyadic rationals $t^{(n)}_j = j 2^{-n}$, $j=0, \cdots, 2^{n}$ and introduce the functions $\psi_n(t) = t^{(n)}_j$, for $t \in [t^{(n)}_j, t^{(n)}_{j+1})$. For each integer $n\geq1$, we define the continuous process $y^{(n)}=\{(y^{(n)}_1(t),y^{(n)}_2(t)), \mathcal{F}_t; 0\leq t < \infty \}$ by setting $y^{(n)}(0)=(x,0)$ and then recursively for $t\in (t^{(n)}_{j}, t^{(n)}_{j+1}] $:
\begin{align*}
y^{(n)}_1(t) & = y^{(n)}_1(t^{(n)}_j) + b(y^{(n)}(t^{(n)}_j)) (t-t^{(n)}_j) + \sigma(y^{(n)}(t^{(n)}_j)) (W_t - W_{t^{(n)}_j}),   \\
y^{(n)}_2(t) & = L^{0}_t(y^{(n)}_1)
\end{align*}

\noindent for $j=0, \cdots, 2^{n}$. Defining the new coefficients for $y\in \mathcal{C}([0,\infty),\rr\times \rr_+)$
$$
b^{(n)}(t, y) := b(y_{\psi_n(t)}), \quad \sigma^{(n)}(t,y) := \sigma(y_{\psi_n(t)}), \quad t\geq0
$$

\noindent we remark that $y^{(n)}$ solves the functional SDE 
\begin{align*}
y^{(n)}_1(t)   & = x + \int_0^t b^{(n)}(s,y^{(n)}) ds + \int_0^t \sigma^{(n)}(s,y^{(n)}) dW_s \\
y^{(n)}_2(t) & = L^{0}_t(y^{(n)}_1)
\end{align*}

\noindent which clearly satisfies
$$
\sup_{n\geq1} \mathbb{E}[|y^{(n)}(t)-y^{(n)}(s)|^{2p}] \leq C(t-s)^{p}, \quad 0\leq s \leq t \leq T
$$

\noindent for some positive constant $C$ independent of $n$, $t$ and $s$. Thus, the sequence of probability measures $\mathbb P^{(n)} = \mathbb P \circ (y^{(n)})^{-1}$, $n\geq1$, is tight. Relabelling the indices if necessary, we may assert that $(\mathbb P^{(n)})_{n\geq1}$ converges weakly to a probability measure $\mathbb P^{*}$. 

Let $h\in \mathcal{D}$. We denote by $\left\{ \mathcal{B}_t \right\}$ the canonical filtration. It remains to prove that for every bounded, continuous function $f: \mathcal{C}([0,\infty),\rr\times \rr_+) \rightarrow \rr$ which are $\mathcal{B}_s$-measurable, one has
\begin{equation}
\mathbb{E}_{\mathbb P^*}\Big[\left\{ h(y(t))-h(y(s)) - \int_s^t \mathcal{L}h(y(v)) dv \right\} f(y)\Big] = 0.
\end{equation}

From Proposition \ref{generalized:ito:lemma}, one has
$$
\mathbb{E}_{\mathbb P^{(n)}}\Big[ \left\{h(y(t))-h(y(s)) - \int_s^t \mathcal{L}^{(n)}_v h(y^{(n)}) dv \right\} f(y)\Big] = 0,
$$

\noindent with
$$
\mathcal{L}^{(n)}_v h(y) = b^{(n)}(v, y) \partial_1 h(y(v)) + \frac12 a^{(n)}(v,y) \partial^2_1 h(y(v)) + \partial_3 h(y(v)) \I_\seq{y_1(v)\geq0}.
$$

Thus it remains to prove that $b^{(n)}$ and $a^{(n)}$ converges to $b$ and $a$ uniformly on compact subset of $\mathcal{C}([0,\infty),\rr\times \rr_+ \times \rr_+)$. Let $K $ be a compact subset of $\mathcal{C}([0,\infty),\rr\times \rr_+)$ so that
$$
M:= \sup_{w\in K, v \in [0,T]}|w(u)|< \infty, \quad \lim_{n\rightarrow +\infty} \max_{|t-s|\leq 2^{-n}, (s,t)\in [0,T]^2, w \in K} |w(t)-w(s)| =0
$$

 Since $b$ and $\sigma$ are uniformly continuous on $[-M,M] \times [0,M]$, for every positive $\varepsilon$ we can find an integer $n(\varepsilon)$ such that 
$$
\sup_{s \in [0,T], w\in K}\left\{ |b^{(n)}(s, w) - b(w)| + |\sigma^{(n)}(s, w) - \sigma(w)|\right\} \leq  \varepsilon, \quad n\geq n(\varepsilon)
$$

\noindent and the uniform convergence on $K$ of $(b^{(n)},a^{(n)})$ to $(b,a)$ follows. This completes the proof of Theorem \ref{weak:existence:thm}.
\end{proof}

\begin{remark}\label{weak:existence:bounded:mes:drift}We proved that weak existence for the SDE \eqref{sde:dynamics:local:time} holds under the assumption that $b$ and $\sigma$ are continuous and bounded. Using a transformation of the drift via the Girsanov theorem, one easily obtain weak existence under the assumption that $b$ is bounded measurable and $a=\sigma^2$ is continuous and uniformly elliptic. We omit technical details and refer the interested reader to Proposition 5.3.6 in \cite{kara:shre:91} for a similar argument in the standard diffusion setting. Hence, we conclude that under \A{R-$\eta$}, for some $\eta\in (0,1]$, and \A{UE}, weak existence holds for the SDE \eqref{sde:dynamics:local:time}. 
\end{remark}

\subsection{Weak uniqueness and representation of the transition density}\label{uniqueness:density:local:time}$\,$\vskip5pt
We now introduce the proxy process $\bar{X}_t:=x_0 + \sigma(z_1)W_t$, $t\geq0$, obtained from the original process $X$ by removing the drift part and by freezing the diffusion coefficient at $z_1=(x_1,\ell_1)\in \rr \times \rr_+$. For $f\in \mathcal{C}_b(\rr \times \rr_+)$, we define 
\begin{align*}
 \bar{P}_t f(x_0,\ell_0) & = \E[f(\bar{X}_t, \ell_0 + L^{0}_t(\bar{X}))] \\
 &  = \E[f(x_0 + \bar \sigma W_t, \ell_0 + L^{0}_t(\bar{X}))],
\end{align*}

\noindent where $\bar \sigma=\sigma(z_1)$, $\bar a = \bar \sigma^2$, and for $f\in \mathcal{D}$, the operator
\begin{align*}
\bar{\mathcal{L}}f(x,\ell) & =  \frac12 \bar{a} \, \partial^2_1 f(x-,\ell), \quad (x,\ell) \in \rr  \times \rr_+.
\end{align*}


We now compute the bivariate transition density of the approximation process $(\bar{X}_t, L^{0}_t(\bar{X}))_{t\geq0}$ from the joint density of $(W_t,L^{0}_t(W))$ which is readily available from Karatzas \& Shreve \cite{kararzas:shreeve:local}. 

We denote by $T_0=\inf\left\{ t\geq0: x_0 + \sigma(z_1) W_t = 0\right\}$ the first hitting time of $0$ by the process $(\bar{X}_t)_{t\geq0}$. Let $f\in \mathcal{C}_b(\rr \times \rr_+)$. We compute each term of the following decomposition:
\begin{align*}
\bar{P}_t f(x_0,\ell_0) & := \mathbb{E}[f(x_0+ \bar \sigma W_t, \ell_0) \I_\seq{T_0\geq t}] + \mathbb{E}[f(x_0+\bar  \sigma W_t, \ell_0+ L^{0}_t(\bar{X})) \I_\seq{T_0 < t}]\\
& =: I + II 
\end{align*}

In the first term, the Brownian motion does not accumulate local time at zero. The bivariate density of $(W_t, \max_{0\leq s \leq t}W_s)$, see e.g. \cite{kara:shre:91}, gives
$$
I = \int_{\rr \times \rr_+} f(x,\ell) \left\{ H_0(\bar{a} t, x-x_0) - H_0(\bar{a}t, x+x_0) \right\}  \I_\seq{x_0  x\geq0}  dx\delta_{\ell_0}(d\ell).
$$

To compute $II$ we make use of the bivariate density of $(W_t,L^{0}_t(W))_{t\geq0}$ established in \cite{kararzas:shreeve:local}. Conditioning with respect to $T_0$ and using the strong Markov property of $W$ yield
\begin{align*}
II & =  \int_0^t \P(T_0 \in ds) \mathbb{E}[f(x_0 + \bar \sigma W_t, \ell_0 + L^{0}_t(\bar{X}))| T_0=s] \\
 & =  \int_0^t  \P(T_0 \in ds) \mathbb{E}[f( \bar\sigma W_{t-s}, \ell_0 + \bar \sigma L^{0}_{t-s}(W))] \\
 & =  \int_0^t  ds\,  (-H_1)(s, \frac{|x_0|}{\bar \sigma}) \int_{\rr\times \rr_+} f(\bar \sigma x, \ell_0 + \bar \sigma \ell ) (-H_1)(t-s, |x| + \ell)  dx d\ell
\end{align*}

\noindent where we used the exact expression for the density of the passage time $T_0$, namely $\P(T_0\in ds) = (-H_1)(s,|x_0|/ \bar \sigma) ds$, $s>0$ and $x_0 \in \rr$. Since the sum of independent passage times is again a passage time, see e.g. page 824 of Karatzas \& Shreve \cite{kararzas:shreeve:local}, one has
\begin{equation}
\label{convolution:hitting:times}
(-H_1)(t, |x| + |y|) = \int_0^t ds \, (-H_1)(t-s,|x|) (-H_1)(s,|y|); \quad x,y \neq 0 , \quad t>0.
\end{equation}

Combining these observations with Fubini's theorem and a change of variable yield
\begin{align*}
II & = \int_{\rr\times \rr_+} f(x, \ell) \frac{1}{\bar a} (-H_1)(t, \frac{|x|+|x_0|+ \ell-\ell_0}{\bar \sigma}) \I_\seq{\ell_0 \leq \ell }dx d\ell .
\end{align*}

Combining $I$ and $II$, we see that the couple $(\bar{X}_t, \ell_0+ L^{0}_t(\bar X))$ admits a density $(x,\ell) \mapsto \bar{p}_t(x_0,\ell_0, x, \ell)$, that is
\begin{gather*}
\bar{p}_t(x_0,\ell_0, dx, d\ell)  = \bar{p}_t(x_0,\ell_0, x, \ell) \nu(x_0,\ell_0, dx, d\ell),  \quad t>0, 
\end{gather*}

\noindent with $\bar{p}_t(x_0,\ell_0, x, \ell) := \bar{f}_t(x_0,x) \I_\seq{\ell= \ell_0} + \bar{q}_t(x_0, \ell_0,  x, \ell)  \I_\seq{\ell_0 < \ell}$ and 
\begin{align*}
\bar{f}_t(x_0,x)&  := H_0(\bar a t, x-x_0) - H_0(\bar{a}t, x+x_0), \\
\bar{q}_t(x_0,\ell_0, x, \ell) &:= - \frac{1}{\bar{a}} H_1( t,  (|x|+|x_0| + \ell-\ell_0)/\bar \sigma), \\
\nu(x_0,\ell_0, dx, d\ell) &:=  \I_\seq{\ell_0 < \ell} dx d\ell + \I_\seq{x_0x \geq 0}  dx\delta_{\ell_0}(d\ell).
\end{align*}

Moreover, as already mentioned in assumption \A{H1} in Section \ref{perturb:section}, we let $\hat{p}_t(x_0,\ell_0, x,\ell) = \hat{f}_t(x_0,x) \I_\seq{\ell= \ell_0} + \hat{q}_t(x_0, \ell_0,  x, \ell)  \I_\seq{\ell_0 < \ell}$ with
\begin{align*}
\hat{f}_t(x_0,x)&  := H_0(a(x,\ell_0) t, x-x_0) - H_0(a(x,\ell_0)t, x+x_0), \\
\hat{q}_t(x_0,\ell_0, x, \ell) &:= - \frac{1}{a(x,\ell)} H_1( t,  (|x|+|x_0| + \ell-\ell_0)/\sigma(x,\ell)).
\end{align*}

 Hence, we see that both measures $\bar{p}_t$ and $\hat{p}_t$ consist of two parts, the first part is absolutely continuous with respect to the $\sigma$-finite measure $\I_\seq{x_0 x \geq 0}   dx  \delta_{\ell_0}(d\ell)$. Here the approximation process consists in freezing the diffusion coefficient at $(x,\ell_0)$. This is a natural idea since in this part the process $\bar{X}$ does not accumulate local time at zero and $\ell_0$ is the both the initial and terminal point of the density. The second part is absolutely continuous with respect to the Lebesgue measure on $\rr \times \rr_+$. Here the approximation process is obtained by freezing the diffusion coefficient at the terminal point of the density as in the standard diffusion case.
 
The first main result of this section establishes weak uniqueness for the SDE \eqref{sde:dynamics:local:time} by proving that assumptions \A{H1} and \A{H2} of Section \ref{perturb:section} are satisfied. Its proof is given in the Appendix. 

\begin{theorem} \label{weak:existence:local:time} For $\eta \in (0,1]$, under \A{R-$\eta$} and \A{UE}, weak uniqueness holds for the SDE \eqref{sde:dynamics:local:time}. 
\end{theorem}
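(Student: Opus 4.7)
The plan is to invoke Theorem \ref{weak:uniqueness:result} by checking assumptions \A{H1} and \A{H2} for the proxy $\bar X_t = x_0 + \sigma(z_1)W_t$ and the decomposition $\bar p^z_t = \bar f_t\,\I_\seq{\ell=\ell_0} + \bar q_t\,\I_\seq{\ell_0<\ell}$ already computed above, where $z=(z_1,\zeta_1)\in\rr\times\rr_+$ is the freezing point. Weak existence is provided by Theorem \ref{weak:existence:thm} together with the Girsanov argument of Remark \ref{weak:existence:bounded:mes:drift}, so only uniqueness requires genuine work.

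The bulk of \A{H1} is mechanical from the explicit formulas for $\bar f_t$ and $\bar q_t$. Parts (i) and (iii) are direct inspection; (ii)(b) is the generalised It\^o formula of Proposition \ref{generalized:ito:lemma} applied to functions in $\mathcal{D}$, the transmission condition \eqref{transmission:condition} being exactly what makes the local-time integral vanish and leaves a square-integrable martingale; (ii)(a) holds because $\bar P^z_t$ preserves boundedness, smoothness away from $0$, and the transmission condition at $0$, as can be checked directly on $\bar f_t$ and $\bar q_t$; (v) follows by splitting into $\ell=\ell_0$, where $\hat f_t(x_0,\cdot)$ is an approximate identity on the half-line through $x_0$, and $\ell>\ell_0$, where the total mass of $\hat q_t$ tends to $0$ by Gaussian dominated convergence using the Hermite estimate $|H_1(t,y)|\leq C t^{-1/2}g(C't,y)$.

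The crux of the proof, and the main obstacle, is the smoothing estimate \eqref{hypiv} for the diagonal kernel $\hat\theta_t(x_0,\ell_0,x,\ell) = \bar\theta^{(x,\ell)}_t(x_0,\ell_0,x,\ell)$. Starting from
\[
\bar\theta^z_t(x_0,\ell_0,x,\ell) = b(x_0,\ell_0)\,\partial_1\bar p^z_t(x_0,\ell_0,x,\ell) + \tfrac{1}{2}\bigl(a(x_0,\ell_0)-a(z)\bigr)\,\partial_1^2 \bar p^z_t(x_0,\ell_0,x,\ell),
\]
the freezing $z=(x,\ell)$ turns the coefficient increment into a H\"older remainder of order $|x-x_0|^\eta + |\ell-\ell_0|^\eta$. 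On the absolutely continuous piece $\{\ell>\ell_0\}$, differentiating $\bar q_t$ yields Hermite factors $t^{-1}H_k(t,(|x|+|x_0|+\ell-\ell_0)/\bar\sigma)$; the H\"older prefactor absorbs $t^{\eta/2}$ via the space-time inequality, and the $dxd\ell$ integration is handled through the convolution identity \eqref{convolution:hitting:times}, which recombines the product of hitting-time densities into a single density of total mass one, producing the exponent $\zeta_{\mathrm{ac}} = -1+\eta/2 > -1$. On the singular piece $\{\ell=\ell_0\}$, the second derivative $\partial_1^2\bar f_t$ contributes $H_2(\bar a t, x-x_0) - H_2(\bar a t, x+x_0)$; on the support $\{x x_0\geq 0\}$ one has $|x-x_0| \leq |x+x_0|$, so the reflected Gaussian also absorbs the H\"older factor and integration in $dx$ yields the same exponent $-1+\eta/2$. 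In the iterated expansion of Corollary \ref{corolllary:semigroup}, cross terms mixing the singular and absolutely continuous parts threaten to produce a non-integrable time singularity; the key device, as announced in the introduction, is the symmetry $\bar f_t(x_0,x) = \bar f_t(x,x_0)$ of the killed-Brownian density in its two endpoints, which lets one interchange the roles of backward and forward variables and recover the missing power of $t$.

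Once \A{H1} is in place, \A{H2} is a direct Fubini argument on $\bar R_\lambda f(x) = \int_0^\infty e^{-\lambda t}\bar P^z_t f(x)\,dt$: $\bar R_\lambda$ commutes with $\lambda-\bar{\mathcal{L}}^z$ by the Markov property of $\bar Y^z$, and the uniform-in-$t$ dominations of \A{H1}(iv) justify interchanging $(\mathcal{L}-\bar{\mathcal{L}}^z)$ with the $dt$ integral. Theorem \ref{weak:uniqueness:result} then delivers weak uniqueness for \eqref{sde:dynamics:local:time}.
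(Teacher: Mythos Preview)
Your overall strategy---verify \A{H1} and \A{H2} for the explicit proxy and invoke Theorem \ref{weak:uniqueness:result}---is the paper's strategy, and your handling of (i), (ii)(a)--(c), (iii) is fine. But two points need correction.

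First, the paragraph about ``cross terms mixing the singular and absolutely continuous parts'' and the symmetry $\bar f_t(x_0,x)=\bar f_t(x,x_0)$ does not belong in this proof. That argument is the key device for Theorem \ref{existence:bivariate:density:local:time} (the pointwise density expansion), where one needs to control the iterated space--time convolutions $\hat p \ast \hat\theta \ast \cdots \ast \hat\theta$ \emph{pointwise} in the terminal variable. For weak uniqueness one only needs the \emph{integrated} one-step bound \eqref{hypiv}, i.e.\ $\int|\hat\theta_t(x_0,\ell_0,x,\ell)|\,\nu(x_0,\ell_0,dx,d\ell)\leq Ct^{-1+\eta/2}$, and Theorem \ref{weak:uniqueness:result} does the rest; no iteration and no symmetry trick are required. (Incidentally, the identity \eqref{convolution:hitting:times} is a \emph{time} convolution of passage-time densities and is not the tool for the $dx\,d\ell$ integration; the paper obtains the bound on $\{\ell>\ell_0\}$ by integrating by parts in $\ell$, see \eqref{localtime_estimate_2}.)

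Second, your verification of \A{H2} has a genuine gap. The bounds of \A{H1}(iv) are \emph{not} uniform in $t$: the non-diagonal kernel blows up like $t^{-1}$ and the diagonal one like $t^{-1+\eta/2}$ near $0$, so they do not by themselves justify interchanging $(\mathcal{L}-\bar{\mathcal{L}}^z)$ with $\int_0^\infty e^{-\lambda t}\,dt$, nor do they show $\bar R_\lambda\mathcal{D}\subset\mathcal{D}$. What is needed is that for $g\in\mathcal{D}$ (so $g$ already has bounded $\partial_1 g$, $\partial_1^2 g$, $\partial_2 g$ and satisfies the transmission condition), one can integrate by parts in the explicit formulas for $\bar P_t g$ to transfer the $x_0$-derivatives onto $g$; this yields bounds on $\partial_1\bar P_t g$, $\partial_1^2\bar P_t g$, $\partial_2\bar P_t g$ that are uniform in $t>0$. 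One must also check directly that $\bar P_t g$ itself satisfies the transmission condition \eqref{transmission:condition} at $x_0=0$, which is a short computation from the explicit $\bar f_t$ and $\bar q_t$. Only then does $\bar R_\lambda g\in\mathcal{D}$ and the Fubini for $(\mathcal{L}-\bar{\mathcal{L}})\bar R_\lambda g$ go through.
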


In the following, we show that, given $(x_0, \ell_0) \in \rr \times \rr_+$, the transition density of the process $ (X^{x_0}_t, \ell_0 + L^0_t(X))_{t\geq0}$ is absolutely continuous with respect to the sigma finite measure $\nu(x_0, \ell_0, dx, d\ell)$. Our strategy consists in establishing a representation in infinite series of $P_t g$ from which stems an explicit representation of the density of the couple $(X^{x_0}_t, \ell_0 + L^0_t(X))$, see Theorem \ref{existence:bivariate:density:local:time}. Though we will not proceed in that direction, we point out that this representation may be useful in order to study the regularity properties of the density, to obtain integration by parts formulas or to derive an unbiased Monte Carlo simulation method. We refer the interested reader to \cite{frikha:li:kohatsu} for some results in that direction related to the first hitting times of one-dimensional elliptic diffusions. 

To this end, we need to iterate the first step expansion obtained in Theorem \ref{t1}. We recall that 
\begin{align}
\mathcal{S}_t g(x_0,\ell_0) &:= \int g(x,\ell) \left\{ - \frac12 \frac{(a(x_0,\ell_0)-a(x,\ell))}{a^2(x,\ell)}  H_3(t, \frac{|x|+|x_0|+ \ell-\ell_0}{\sigma(x,\ell)}) \right. \nonumber \\
& \quad \left. +  \frac{b(x_0,\ell_0) \mathrm{sign}(x_0)}{a^{\frac32}(x,\ell)} H_2(t, \frac{|x|+|x_0|+ \ell-\ell_0}{\sigma(x,\ell)}) \right\} \I_\seq{\ell> \ell_0} dx d\ell \label{def:S:local:time:drift} \\
& \quad + \int g(x,\ell_0) \left\{\frac12 (a(x_0,\ell_0) - a(x,\ell_0))  \left\{H_2(a(x,\ell_0) t, x-x_0) - H_2(a(x,\ell_0) t, x+x_0)\right\} \right. \nonumber \\
& \quad \quad \left. - b(x_0,\ell_0) \left\{H_1(a(x,\ell_0) t, x-x_0) - H_1(a(x,\ell_0) t, x+x_0)\right\}  \right\} \I_\seq{x x_0 \geq0} dx \nonumber
\end{align}

\noindent with $\mathrm{sign}(x_0)=-\I_\seq{x_0\leq 0} + \I_\seq{x_0>0}$. From this expression, we see that $(x_0,\ell_0) \mapsto \mathcal{S}_t g(x_0,\ell_0)$ is not continuous at zero, unless $b(0,\ell_0)=0$, $\ell_0 \in \rr_+$. Hence, we cannot use directly Corollary \ref{corolllary:semigroup}. We proceed as follows. We first consider the drift-less SDE. We then briefly indicate how to proceed in the presence of a bounded measurable drift by means of the Girsanov theorem. From now on, we let $b\equiv 0$. 

We will use the notation $\mathcal{S}_t g(x_0,\ell_0) = \int g(x,\ell)\hat \theta_t(x_0,\ell_0, x, \ell) \nu(x_0,\ell_0, dx,d\ell)$ with 
\begin{align}
\label{thetahat:local:time}
 \hat \theta_t(x_0,\ell_0, x, \ell)& := 
\begin{cases}
    - \frac12 \frac{(a(x_0,\ell_0)-a(x,\ell))}{a^2(x,\ell)}  H_3(t, \frac{|x|+|x_0|+ \ell-\ell_0}{\sigma(x,\ell)}),  & \ell> \ell_0,\\
      \frac12 (a(x_0,\ell_0) - a(x,\ell_0))  \left\{H_2(a(x,\ell_0) t, x-x_0) - H_2(a(x,\ell_0) t, x+x_0)\right\}, &     \ell=\ell_0.
\end{cases}
\end{align}





Since the function $(x_0,\ell_0) \mapsto \mathcal{S}_t g(x_0,\ell_0)$ is continuous on $\rr \times \rr_+$, applying Corollary \ref{corolllary:semigroup}, we get
\begin{align}
P_T g(x_0,\ell_0) & = \hat{P}_T g(x_0,\ell_0) + \sum_{n \geq 1}  \int_{\Delta_n(T)} \hat{P}_{s_n} \mathcal{S}_{s_n - s_{n-1}} \cdots \mathcal{S}_{T-s_1} g(x_0,\ell_0) \, d\s_{n} \label{semigroup:series:local:time}
\end{align}

\noindent with the convention $s_0=T$.

 In order to retrieve the transition density associated to $(P_t)_{t\geq 0}$, we are aiming to prove an integral representation for the above series. More precisely, our aim is to prove that the right-hand side of \eqref{semigroup:series:local:time} can be written as $\int g(x,\ell) p_T(x_0,\ell_0,x,\ell)\nu(x_0,\ell_0, dx, d\ell)$, with an explicit representation for $p_T(x_0,\ell_0,x,\ell)$. We start by an examination of the $n$-th term of the series expansion. 

Before proceeding, we observe that the measure $\nu(x_0,\ell_0, dx, d\ell)$ satisfies a useful convolution type property in the sense that 
\begin{gather}
\nu(x_0, \ell_0, dx',d\ell') \nu(x',\ell', dx, d\ell) =  u(x_0, \ell_0, x, \ell, dx',d\ell' ) \nu(x_0, \ell_0, dx, d\ell) \label{nulocal1}
\end{gather}

\noindent where we set 
\begin{align*}
u(x_0, \ell_0,x, \ell,dx',d\ell')
& := \begin{cases}
\I_{\{\ell_0<\ell'<\ell\}}dx'd\ell' + \I_{\{x'x_0> 0\}}dx' \delta_{\ell_0}(d\ell') + \I_{\{xx'>0\}}dx' \delta_{\ell}(d\ell'), & \,\, \ell_0 < \ell, \\
 \I_{\{ x_0 x'>0\}} dx' \delta_{\ell_0}(d\ell'),  &  \,\, \ell_0 = \ell,\,  x_0 x>0.
\end{cases}
\end{align*}

Applying repeatedly \eqref{nulocal1} and using Fubini's theorem, we get
\begin{align*}
& \int_{\Delta_n(T)} d\s_n\hat{P}_{s_n} \mathcal{S}_{s_{n-1}-s_n} \dots \mathcal{S}_{T-s_1}g(x_0,\ell_0) \\
& = \int_{\rr \times \rr_+} g(x,\ell) \left\{ \int_{\Delta_n(T)} d\s_n \int_{(\rr\times \rr_+)^{n}}\hat p_{s_n}(x_0,\ell_0, x_1, \ell_1) \right.\\
& \quad 	\left. \times \left[\prod_{i=1}^{n} \hat{\theta}_{s_{n-i}-s_{n-i+1}}(x_{i}, \ell_{i}, x_{i+1}, \ell_{i+1}) u(x_{0},\ell_{0}, x_{i+1},\ell_{i+1}, dx_{i},d\ell_{i})\right]\right\} \nu(x_0,\ell_0, dx, d\ell)\\
& =  \int_{\rr \times \rr_+} g(x,\ell) p^{n}_T(x_0,\ell_0,x,\ell)\nu(x_0,\ell_0, dx, d\ell)
\end{align*}

\noindent where we set  
\begin{align}\label{pn:local:time}
p^n_T(x_0,\ell_0, x, \ell) & := 
 \begin{cases}
 \int_{\Delta_n(T)} d\s_n \int_{(\rr\times \rr_+)^{n}}\hat p_{s_n}(x_0,\ell_0, x_1, \ell_1)\times \\
\left[\prod_{i=1}^{n} \hat{\theta}_{s_{n-i}-s_{n-i+1}}(x_{i}, \ell_{i}, x_{i+1}, \ell_{i+1}) u(x_{0},\ell_{0}, x_{i+1},\ell_{i+1}, dx_{i},d\ell_{i})\right]  & n \geq 1, \\
\hat p_{T}(x_0,\ell_0, x,\ell) & n = 0.
\end{cases}
\end{align}

From \eqref{thetahat:local:time} and the space-time inequality, it is easy to see that obtain the following estimate
\begin{align}
\label{bound:thetahat:local:time}
 |\hat \theta_t(x_0,\ell_0, x, \ell)|& \leq 
\begin{cases}
     \frac{C}{t^{\frac{3-\eta}{2}}}  H_0(ct, |x|+|x_0|+ \ell-\ell_0),  & \ell> \ell_0,\\
      \frac{C}{t^{1-\frac{\eta}{2}}}  H_0(c t, x-x_0), &     \ell=\ell_0,
\end{cases}
\end{align}

\noindent for some constants $C,\, c>1$.

We are ready to give a representation for the density of the couple $(X^{x_0}_t, \ell_0+ L^{0}_t(X^{x_0}
))$. As already mentioned in the introduction, we point out that the proof of the convergence of the asymptotic expansion for the transition
density is not standard in the current setting. Indeed, in the classical diffusion setting, the parametrix expansion of the transition density converges since the order of the singularity in time induced by the parametrix kernel $\hat{\theta}_t$ is of order $t^{-1+\frac{\eta}{2}}$, which is still integrable near $0$. The situation here is much more delicate. At first glance, the order of the time singularity in $\hat{\theta}_t$ consists in two parts. The first part corresponds to the non-singular part of the law of $(\bar{X}^{x_0}_t,\ell_0+L^{0}_t(\bar{X}^{x_0}))$. From \eqref{bound:thetahat:local:time}, it induces a singularity in time of order $t^{\frac{-3+\eta}{2}}$ which is integrable in time after integrating the kernel $H_0(ct, |x|+|x_0|+ \ell-\ell_0)$ on the domain $\rr \times (\ell_0,\infty)$. 

The second part corresponds to the singular case where the law of $(\bar{X}^{x_0}_t,\ell_0+L^{0}_t(\bar{X}^{x_0}))$, is the one of the proxy process killed when it reaches zero, and is absolutely continuous with respect to the singular measure $dx \delta_{\ell_0}(d\ell)$. Here the situation is standard and the singularity in time appearing in \eqref{bound:thetahat:local:time} is integrable. 

The main difficulty appears when one wants to control the whole convolution appearing in the right-hand side of \eqref{pn:local:time}. More precisely, it lies in the cross-terms which are of a different nature, for instance when one convolutes the non-singular part in the convolution kernel $\hat{\theta}_{T-s_1}$ and the singular part in the convolution kernel $\hat{\theta}_{s_1-s_2}$. Standard arguments such as the one used in \cite{frie:64} or \cite{Menozzi} do not guarantee the convergence of the integral defining \eqref{pn:local:time}. To overcome this difficulty and show that the parametrix expansion for the transition density converges, one has to make use of the key estimate obtained in Lemma \ref{estimate:kernel:proxy} which relies on the symmetry in the initial and terminal point of the density of the killed proxy process, in order to retrieve the integrability in time of the underlying convolution kernel.

As our second main result, we prove that the transition density of $(X_t, \ell_0+ L^{0}_t(X))_{t\geq0}$ exists and satisfies a Gaussian upper bound. Its proof is given in the Appendix.

\begin{theorem}\label{existence:bivariate:density:local:time} Assume that \A{R-$\eta$} and \A{UE} hold for some $\eta \in (0,1]$. For $(x_0,\ell_0) \in \rr\times \rr_+$, define the measure
\begin{align*}
p_T(x_0,\ell_0, dx, d\ell) & : = p_T(x_0,\ell_0, x, \ell)\nu(x_0, \ell_0, dx, d\ell)\\
					 & = p^{1}_T(x_0,\ell_0, x, \ell) \, \I_{\{ \ell_0 < \ell \}}  dx d\ell +  p^{2}_T(x_0,\ell_0, x) \, \I_\seq{x x_0 \geq 0} dx \delta_{\ell_0}(d\ell)
\end{align*} 
\noindent with $p_T(x_0,\ell_0, x, \ell) := \sum_{n\geq 0} p^{n}_T(x_0,\ell_0, x,\ell)$ and 
\begin{align*}
p^{1}_T(x_0, \ell_0, x, \ell)  :=\sum_{n\geq 0} p^{n}_T(x_0,\ell_0, x, \ell), \quad p^{2}_T(x_0, \ell_0, x) :=  \sum_{n\geq 0} p^{n}_T(x_0,\ell_0, x, \ell_0).
\end{align*}

Then, both series defining $p^{1}_T(x_0,\ell_0, x, \ell)$ and $p^{2}_T(x_0,\ell_0, x)$ converge absolutely and uniformly for $(x_0,\ell_0), \,(x, \ell)\in (\rr \times \rr_+)^2$. Moreover for $h \in \mathcal{C}_b(\rr \times \rr_+)$ the following representation for the semigroup holds,
\begin{equation*}
P_Th(x_0,\ell_0) = \int_{\rr \times \rr_+}  \,h(x,\ell) \, p_T(x_0,\ell_0,x,\ell)\nu(x_0, \ell_0, dx, d\ell).
\end{equation*}

Therefore, for all $(x_0,\ell_0)\in \rr\times \rr_+$, the function $(x,\ell) \mapsto p_T(x_0,\ell_0,x,\ell)$ is the probability density function of the random vector $(X^{x_0}_{T}, \ell_0+ L^{0}_T(X^{x_0}))$ with respect to the $\sigma$-finite measure $\nu(x_0,\ell_0, dx,d\ell)$, where $X^{x_0}_T$ is the solution taken at time $T$ of the SDE \eqref{sde:dynamics:local:time} starting from $x_0$ at time $0$, $L^{0}_T(X^{x_0})$ being its running symmetric local time at time $T$. 

Finally, there exists some constants $C,c>1$ such that for all $(x_0,\ell_0),\, (x,\ell)\in  \rr \times \rr_+$, the following Gaussian upper-bounds hold
\begin{equation}
\label{gaussian:upper:bound:local:time}
 p^{1}_T(x_0,\ell_0,x,\ell)   \leq C T^{-1/2} H_0(c T, |x|+|x_0|+\ell-\ell_0)\ \ \mbox{ and } \ \ p^{2}_T(x_0,\ell_0,x)   \leq  C H_0(c T, x-x_0).
\end{equation}

\end{theorem}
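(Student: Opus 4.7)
The starting point is the series representation \eqref{semigroup:series:local:time} of Corollary \ref{corolllary:semigroup}, which applies in the drift-less case $b\equiv 0$ since $(x_0,\ell_0)\mapsto \mathcal{S}_t g(x_0,\ell_0)$ is then continuous on $\rr\times\rr_+$. Applying iteratively the convolution-type identity \eqref{nulocal1} for $\nu$ together with Fubini's theorem, each $n$-fold iterate $\hat P_{s_n}\mathcal{S}_{s_{n-1}-s_n}\cdots\mathcal{S}_{T-s_1} g(x_0,\ell_0)$ rewrites as $\int g(x,\ell)\,p^n_T(x_0,\ell_0,x,\ell)\,\nu(x_0,\ell_0,dx,d\ell)$ with $p^n_T$ given by \eqref{pn:local:time}. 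Once it is shown that $\sum_{n\geq 0} p^n_T$ converges absolutely and uniformly in $(x_0,\ell_0,x,\ell)\in(\rr\times\rr_+)^2$ and is dominated by the Gaussian profiles appearing in \eqref{gaussian:upper:bound:local:time}, dominated convergence yields the claimed representation of $P_T h$, and weak uniqueness (Theorem \ref{weak:existence:local:time}) identifies $p_T$ as the density of $(X_T^{x_0},\ell_0+L^0_T(X^{x_0}))$ with respect to $\nu$.

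The central task is therefore to establish an $n$-th step bound together with the Gaussian domination. From \eqref{bound:thetahat:local:time} each factor $\hat\theta_{s_{n-i}-s_{n-i+1}}$ in \eqref{pn:local:time} carries a Gaussian spatial kernel and a time singularity of order $t^{-(3-\eta)/2}$ on the jump set $\{\ell_i<\ell_{i+1}\}$ or $t^{-(2-\eta)/2}$ on the non-jump set $\{\ell_i=\ell_{i+1}\}$; only the latter is integrable in time near $0$. When two consecutive factors are of the same type (jump--jump or non-jump--non-jump) the standard parametrix argument—space-time inequality combined with the stability of $H_0(ct,\cdot)$ under spatial convolution—recovers a time weight of order $t^{-1+\eta/2}$, which is integrable on $\Delta_n(T)$ by the Beta-type estimate already invoked in Corollary \ref{corolllary:semigroup}.

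The genuinely new difficulty, as emphasised in the discussion preceding the statement, lies in the cross steps where a jump factor is adjacent to a non-jump factor: at such a step the intermediate space variable is pinned to the singular hypersurface $\{\ell=\ell_0\}$, so the missing $t^{1/2}$ cannot be recovered by a direct Gaussian convolution in the space variable and the naive bound diverges. The resolution, formalised in the forthcoming Lemma \ref{estimate:kernel:proxy}, is to exploit the symmetry of the killed proxy density $\hat f_t(x_0,x)=H_0(\bar a t,x-x_0)-H_0(\bar a t,x+x_0)$ in its space arguments; this allows one to reorganise the cross convolution so that the surviving Gaussian decay is transferred onto the space variable that actually integrates, restoring a time-integrable singularity of order $t^{-1+\eta/2}$ at the cross step. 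Combining this with the preceding estimates yields, by induction on $n$, a bound of the form
\begin{equation*}
|p^n_T(x_0,\ell_0,x,\ell)| \leq C^{n+1}\,\frac{\Gamma(\eta/2)^n}{\Gamma(1+n\eta/2)}\,T^{n\eta/2}\,\mathcal{G}_T(x_0,\ell_0,x,\ell),
\end{equation*}
where $\mathcal{G}_T$ denotes the right-hand side of \eqref{gaussian:upper:bound:local:time} in the regime $\ell>\ell_0$ or $\ell=\ell_0$. The Gamma-function asymptotics then deliver absolute and uniform convergence of the series and the announced Gaussian upper bounds. Finally, the case of a general bounded measurable drift $b$ is handled by a Girsanov transformation: removing the drift produces a drift-less SDE with the same diffusion coefficient $\sigma$ to which the above representation applies, and the Radon--Nikodym density, bounded in all relevant $L^p$ norms thanks to \A{R-$\eta$} and \A{UE}, preserves both the existence of the density and the Gaussian upper bound after taking the conditional expectation given the terminal value.
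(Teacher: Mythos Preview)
Your proposal is correct and follows essentially the same approach as the paper: reduction to the drift-less case via Corollary \ref{corolllary:semigroup} and \eqref{nulocal1}, an inductive Gaussian bound on $p^n_T$ in which the cross (jump/non-jump) convolutions are controlled through the $\beta$-weighted estimate of Lemma \ref{estimate:kernel:proxy} (the symmetry of the killed proxy) to recover an integrable $t^{-1+\eta/2}$ singularity, and finally the drift reinstated by Girsanov as in Remark \ref{bounded:measurable:drift}. The paper's proof makes the induction explicit by splitting each one-step convolution against $u(x_0,\ell_0,x_{i+1},\ell_{i+1},dx_i,d\ell_i)$ into four pieces $A_1,\ldots,A_4$ and separating the cases $s_{n-j}\in(0,s_{n-(j+1)}/2)$ and $s_{n-j}\in(s_{n-(j+1)}/2,s_{n-(j+1)})$, but the logical structure is exactly what you outline.
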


\begin{remark}\label{bounded:measurable:drift}The above Theorem proves the existence of a transition density for the Markov process $(X_t,\ell_0+L^{0}_t(X))_{t\geq0}$ where the dynamics is given by \eqref{sde:dynamics:local:time} without drift. In order to add a drift, one can use the Girsanov Theorem as follows. Let $\rr \times \rr_+ \ni (x,\ell) \mapsto b(x,\ell)$ be a real-valued bounded measurable function. We consider the unique weak solution $\left\{(X,W), (\Omega,\mathcal{F}, \P), ( \mathcal{F}_t)_{t\geq 0}\right\}$ of \eqref{sde:dynamics:local:time}. Let $\widetilde{W}_t := W_t + \int_0^t \tilde{b}(X_s,L^{0}_s(X)) ds$, with $\tilde{b}(x,\ell) := b(x,\ell)/\sigma(x,\ell)$. Then, defining the new probability measure on $\F_T$ by 
\begin{gather*}
\frac{d\mathbb{Q}}{d\P}:=\exp\left\{-\int_0^T \tilde{b}(X_s,L^{0}_s(X)) dW_s - \frac{1}{2}\int_0^T \tilde{b}^2(X_s,L^{0}_s(X)) ds\right\},
\end{gather*}

\noindent from Girsanov's Theorem, we know that $\big\{(\widetilde{X},\widetilde{W}), (\Omega,\mathcal{F},\mathbb{Q}), (\mathcal{F}_t )_{t\geq 0}\big\}$, with $\widetilde{X}_t = x + \int_0^t \sigma(\widetilde{X}_s,A_s(\widetilde{X})) d\widetilde{W}_s$ is a weak solution to \eqref{sde:dynamics:local:time} with $b\equiv 0$. We also know from Theorem \ref{weak:existence:local:time} that weak uniqueness holds for this equation and from Theorem \ref{existence:bivariate:density:local:time} it admits a transition density with respect to the measure $\nu(x_0,\ell_0, dx, d\ell)$. Therefore, for any bounded measurable function $h: \rr \times \rr_+ \rightarrow \rr$, we can write for $t\leq T$,
\begin{align*}
P_t h(x_0,\ell_0) & :=\E_{\P}[h(X_t,\ell_0+L^{0}_t(X))] \\
& = \int_{\rr \times \rr_+} h(x,\ell) \E_{\mathbb{Q}}\Big[\frac{d\mathbb{P}}{d\Q} \,\Big|\,(X_t,\ell_0+L^{0}_t(X))=(x,\ell)\Big] \,\bar{p}_t(x_0,\ell_0,x,\ell) \nu(x_0, \ell_0, dx,d\ell).
\end{align*}

From the above we deduce that the random vector $(X_t,\ell_0+L^{0}_t(X))$, $t>0$, admits a density with respect to the measure $\nu(x_0,\ell_0, dx,d\ell)$. Then, by an approximation argument that we omit, one may extend Theorem \ref{t1} to include functions $g$ that are bounded measurable with respect to the space variable $x$. Hence, one may iterate the first step formula and obtain the semigroup expansion \eqref{param:series} where the function $\mathcal{S}_t g$ is given by \eqref{def:S:local:time:drift} and $\hat{\theta}_t$ is defined accordingly. Finally, one has to repeat the arguments employed in the proof of Theorem \ref{existence:bivariate:density:local:time}. We omit the remaining technical details.
\end{remark}

\begin{remark}
We again point out that the representation in infinite series obtained previously are of great interest. For instance, one may be interested in studying regularity properties of $(t,x_0,\ell_0, x,\ell) \mapsto p^{1}_t(x_0,\ell_0,x,\ell), p^{2}_t(x_0,\ell_0,x,\ell)$, to derive a probabilistic interpretation of $P_t g$ and $p_t(x_0,\ell_0,x,\ell)$ and to obtain some integration by parts formulas or an unbiased Monte Carlo simulation scheme. We refer e.g. to \cite{frikha:li:kohatsu}, \cite{garroni:menaldi} and the references therein for some results in that direction concerning to the first hitting times of one-dimensional elliptic diffusions.
\end{remark}
\subsection{A diffusion process with coefficients depending on its running maximum} $\,$\vskip5pt
We now turn our attention to the following SDE with dynamics
\begin{equation}
\label{sde:dynamics:maximum}
X_t = x_0 + \int_0^t b(X_s, M_s) ds + \int_0^t \sigma(X_s, M_s) dW_s
\end{equation}

\noindent where $M_t = A_t(X) := m_0\vee \max_{0\leq s\leq t} X_s$, $m_0\geq x_0$, is the running maximum of the process $X$ at time $t$. The state space of the process $(X_t,M_t)_{t\geq0}$ is denoted by the closed set $\mathcal{J}=\left\{(x,m) \in \rr^2: x\leq m \right\}$. We define accordingly the collection of linear maps $P_t f(x_0, m_0) = \mathbb{E}[f(X_t,M_t)]$ for $f\in \mathcal{B}_b(\mathcal{J})$ and introduce the following assumptions:
\smallskip

\begin{itemize}

\item[\A{R-$\eta$}] The coefficients $b$ and $a=\sigma^2$ are bounded measurable functions defined on $\mathcal{J}$. The diffusion coefficient $a$ is $\eta$-H\"older continuous on $\mathcal{J}$.  

\item[\A{UE}] There exists some constant $\underline{a}>0$ such that $\forall (x,m)\in \mathcal{J}$, $\underline{a} \leq a(x,m)$.

\end{itemize}
\smallskip

Although the lines of reasoning used in the proof of Theorem \ref{bivariate:density:maximum} and Theorem \ref{existence:bivariate:density:max} are rather similar to those employed in the case of the SDE with its running symmetric local time, we decided to include this example in order to illustrate the generality of our framework. Indeed, unlike the local time and also the examples considered so far in the literature by means of the parametrix technique, see e.g. \cite{Menozzi}, the running maximum is not a continuous additive functional. This difference is reflected in the definition of the collection of linear maps $(P_t)_{t\geq0}$, and also in the definition of the approximation process $\bar Y$ and the maps $(\bar{P}_t)_{t\geq0}$, see below in Section \ref{weakuniqueness:maximum}. This allows to define a Markov semigroup for $\bar{Y}$ and later on for $Y$.

\smallskip 
The weak existence of a solution to the SDE \eqref{sde:dynamics:maximum} follows from standard results, see e.g. Chapter 6 in \cite{stro:vara:79} or Theorem 5.4.22 and Remark 5.4.23 in \cite{kara:shre:91}, see also Forde \cite{Forde20112802} for another approach under the assumption that $a$ and $b$ are continuous bounded functions. We also refer to the previous Remark \ref{weak:existence:bounded:mes:drift} for the case of bounded measurable drift.  

In the same spirit as in the previous section, we can characterise solutions of the SDE \eqref{sde:dynamics:maximum} in terms of the associated (local) martingale problem. We let $\mathcal{D}$ be the class of functions $f:\mathcal{J} \rightarrow \rr$ such that $f \in \mathcal{C}^{2,1}_b(\mathcal{J})$ which satisfies the condition $\partial_2 f(m, m)= 0$, $m\in \rr$. For $f\in \mathcal{D}$, we define the operator 
$$\mathcal{L} f(x,m)= \frac12 a(x,m) \partial^2_1 f(x, m)+ b(x,m) \partial_1f(x,m).$$ 
Observing that the process $t \mapsto M_t$ increases only on the set $\left\{t: X_t = M_t \right\}$ and by applying It\^o's lemma, we get
\begin{equation}
\label{ito:formula:maximum}
f(t,X_t,M_t) = f(0,x_0,m_0) + \int_0^t \left\{ (\partial_1 f(s,.) + \mathcal{L}f(s,.))(X_s, M_s) \right\} ds + \int_0^t \sigma(X_s,M_s) \partial_2 f(s,X_s,M_s) dW_s 
\end{equation}

\noindent for $f\in \mathcal{C}^1(\rr_+, \mathcal{D})$. 

\subsection{Weak uniqueness and representation of the transition density}\label{weakuniqueness:maximum}$\,$\vskip5pt

We now introduce the proxy process $\bar{X}_t:=x_0 + \sigma(z_1)W_t$, $t\geq0$, obtained from the original process $X$ by removing the drift part and by freezing the diffusion coefficient at $z_1=(x_1,m_1)\in \mathcal{J}$ in the dynamics \eqref{sde:dynamics:maximum}. As already done for the original process $Y$, for $f\in \mathcal{C}_b(\mathcal{J})$, we define accodingly 
\begin{align*}
 \bar{P}_t f(x_0,m_0) & = \E[f(\bar{X}_t, \bar{M}_t)] \\
 &  = \E[f(x_0 + \sigma(z_1) W_t, m_0 \vee \max_{0\leq s\leq t}(x_0 + \sigma(z_1) W_s) )],
\end{align*}

\noindent and, with $\bar \sigma = \sigma(z_1)$, $\bar a = \bar \sigma ^2$, for $f\in \mathcal{D}$, the operator
\begin{align*}
\bar{\mathcal{L}}f(x,m) & =  \frac12 \bar{a} \partial^2_1 f(x, m), \quad (x,m) \in \mathcal{J}.
\end{align*}

We now compute the law of the couple $(\bar{X}_t, \bar{M}_t)$. We denote by $T_{m_0}=\inf\left\{ t\geq0: x_0 + \bar{\sigma} W_t = m_0\right\}$ the first hitting time of $0$ by the process $(\bar{X}_t)_{t\geq0}$. Let $f\in \mathcal{C}_b(\mathcal{J})$. We decompose $\bar{P}_t f(x_0,m_0)$ as follows
\begin{align*}
\bar{P}_t f(x_0,m_0) & := \mathbb{E}[f(x_0+ \bar{\sigma} W_t, m_0) \I_\seq{T_{m_0}\geq t}] \\
& \quad + \mathbb{E}[f(x_0+ \bar{\sigma} W_t, m_0 \vee \max_{0\leq s \leq t}(x_0 + \bar{\sigma} W_s)) \I_\seq{T_{m_0} < t}]\\
& = I + II.
\end{align*}

From the reflection principle of Brownian motion, see e.g. \cite{kara:shre:91}, one derives the law of the proxy process killed when it exits $(-\infty, m_0)$, namely
$$
 I = \E[f(x_0+ \bar{\sigma} W_t, m_0) \I_\seq{T_{m_0}\geq t}]  = \int_{\rr^2} f(x,m) \left\{H_0(\bar{a} t, x-x_0) - H_0(\bar{a} t, 2m_0-x-x_0) \right\} \I_\seq{x\vee x_0 \leq m_0} \, dx \delta_{m_0}(dm)
$$
From the bivariate density of $(W_t, \max_{0\leq s \leq t}W_s)$, see e.g. Proposition 2.8.1 in  \cite{kara:shre:91}, we obtain
\begin{align*}
II & = \E[f(x_0+ \bar{\sigma} W_t, m_0 \vee \max_{0\leq s \leq t}(x_0 + \bar{\sigma} W_s)) \I_\seq{T_{m_0} < t}] \\
       & =  \E[f(x_0+ \bar{\sigma} W_t, \max_{0\leq s \leq t}(x_0 + \bar{\sigma} W_s)) \I_\seq{\max_{0\leq s \leq t}(x_0 + \bar{\sigma} W_s) \geq m_0}] \\
       & = \int_{\rr^2} f(x,m) (-2H_1)(\bar{a} t, 2m-x-x_0)\I_\seq{x \vee x_0\vee m_0 \leq m} dx dm.
\end{align*}

Combining $I$ and $II$, we see that the couple $(\bar{X}_t, \bar{M}_t)$ admits a density $(x,m) \mapsto \bar{p}_t(x_0,m_0, x, m)$ that is
\begin{gather*}
\bar{p}_t(x_0, m_0, dx, dm)  = \bar{p}_t(x_0, m_0, x, m) \nu(x_0, m_0, dx, dm),  \quad t>0, 
\end{gather*}

\noindent with $\bar{p}_t(x_0, m_0, x, m) := \bar{f}_t(x_0,x) \I_\seq{m= m_0} + \bar{q}_t(x_0, m_0,  x, m)  \I_\seq{m_0 < m}$ and 
\begin{align*}
\bar{f}_t(x_0,x)&  := H_0(\bar{a} t, x-  x_0) - H_0(\bar{a} t, 2m_0-x-x_0), \\
\bar{q}_t(x_0, m_0, x, m) &:= -2H_1(\bar{a} t, 2m-x-x_0), \\
\nu(x_0,m_0,dx, dm) & :=   \I_\seq{x\leq m} \I_\seq{m_0 < m} dx dm + \I_\seq{x_0\leq m_0} \I_\seq{x\leq m_0}  dx\delta_{m_0}(dm).
\end{align*}	
Moreover, as already mentioned in assumption \A{H1} in Section \ref{perturb:section}, we let $\hat{p}_t(x_0,m_0, x,m) = \hat{f}_t(x_0,x) \I_\seq{m= m_0} + \hat{q}_t(x_0, \ell_0,  x, \ell)  \I_\seq{m_0 < m}$ with
\begin{align*}
\hat{f}_t(x_0,x)&  := H_0(a(x, m_0) t, x-x_0) - H_0(a(x, m_0)t, 2m_0- x -x_0), \\
\hat{q}_t(x_0,m_0, x, m) &:= -2H_1(a(x,m) t, 2m-x-x_0).
\end{align*}

We also observe that $(\bar{P}_t)_{t\geq0}$ defines a Markov semigroup. The first main result of this section establishes weak uniqueness for the SDE \eqref{sde:dynamics:maximum} by proving that assumptions \A{H1} and \A{H2} of Section \ref{perturb:section} are satisfied. Its proof is given in the Appendix.
\begin{theorem} \label{bivariate:density:maximum} For $\eta \in (0,1]$, under \A{R-$\eta$} and \A{UE}, weak uniqueness holds for the SDE \eqref{sde:dynamics:maximum}.
\end{theorem}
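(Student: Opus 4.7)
The plan is to apply the abstract weak uniqueness Theorem \ref{weak:uniqueness:result}: it suffices to verify the hypotheses \textbf{(H1)} and \textbf{(H2)} in the present setting, with state space $\mathcal{J}=\seq{(x,m)\in\rr^2:x\leq m}$, proxy process $\bar{X}_t=x_0+\bar\sigma W_t$ and $\bar{Y}^z_t=(\bar{X}_t,\bar{M}_t)$, reference $\sigma$-finite measure $\nu(x_0,m_0,dx,dm)$, two-piece density $\bar{p}^z_t=\bar{f}_t\,\I_\seq{m=m_0}+\bar{q}_t\,\I_\seq{m_0<m}$, class $\mathcal{D}$ of $\mathcal{C}^{2,1}_b(\mathcal{J})$-functions satisfying the Neumann-type boundary condition $\partial_2 f(m,m)=0$, and operator $\mathcal{L}f(x,m)=\tfrac12 a(x,m)\partial_1^2 f(x,m)+b(x,m)\partial_1 f(x,m)$.

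The Markov property of $\bar Y^z$ and the existence of $\bar p^z_t$ as a density with respect to $\nu$ required in \textbf{(H1)(i)} are immediate from the explicit formulas established just above the theorem. For \textbf{(H1)(ii)(b)}, the It\^o formula \eqref{ito:formula:maximum} together with the boundary condition $\partial_3 f(s,m,m)=0$ (which makes the $dM_s$-integral vanish since $M$ increases only on $\seq{X=M}$) yields a continuous square-integrable martingale. To check \textbf{(H1)(ii)(a)}, I would write
\begin{align*}
\bar P_t^z g(x_0,m_0) = \E\!\left[g\!\left(x_0+\bar\sigma W_t,\, m_0 \vee (x_0+\bar\sigma \bar{W}_t)\right)\right],\quad \bar{W}_t:=\max_{0\leq s\leq t} W_s \geq 0,
\end{align*}
differentiate under the expectation to get $\partial_{m_0}\bar P_t^z g(x_0,m_0)=\E[\partial_2 g(x_0+\bar\sigma W_t,m_0)\I_\seq{x_0+\bar\sigma \bar{W}_t<m_0}]$, and observe that evaluating at $x_0=m_0$ forces $\seq{\bar{W}_t<0}=\emptyset$, so that $\partial_{m_0}\bar P_t^z g(m_0,m_0)=0$; the $\mathcal{C}^{2,1}_b$-smoothness in the interior then follows by differentiating under the integral in the explicit Gaussian form of $\bar p^z_t$.

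For \textbf{(H1)(ii)(c)}, I would compute
\begin{align*}
(\mathcal{L}-\bar{\mathcal{L}}^z)\bar P_t^z g(x_0,m_0) = \tfrac12(a(x_0,m_0)-\bar{a})\partial_{x_0}^2 \bar P_t^z g(x_0,m_0) + b(x_0,m_0)\partial_{x_0}\bar P_t^z g(x_0,m_0)
\end{align*}
and move the derivatives onto the density to obtain
\begin{align*}
\bar\theta_t^z(x_0,m_0,x,m) = \tfrac12(a(x_0,m_0)-\bar{a})\partial_{x_0}^2 \bar p^z_t(x_0,m_0,x,m) + b(x_0,m_0)\partial_{x_0}\bar p^z_t(x_0,m_0,x,m),
\end{align*}
which is explicitly expressible in terms of $H_1,H_2,H_3$ on each of the two pieces of $\nu$. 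The continuity in \textbf{(H1)(iii)} is then immediate, and for the bounds in \textbf{(H1)(iv)} I would use the space-time inequality to absorb the Hermite polynomial factors into the Gaussian kernels. With the crucial diagonal freezing choice $z=y=(x,m)$, the $\eta$-H\"older regularity of $a$ gives $|a(x_0,m_0)-a(x,m)|\leq C(2m-x-x_0)^\eta$ on $\seq{m_0<m}$ (using that $|x-x_0|,|m-m_0|\leq 2m-x-x_0$ in the domain) and $|a(x_0,m_0)-a(x,m_0)|\leq C|x-x_0|^\eta$ on $\seq{m=m_0}$. Integrating against $\nu(x_0,m_0,dx,dm)$ on the absolutely continuous piece via the change of variables $(u,v)=(2m-x-x_0,m-m_0)$, and invoking the standard parametrix bounds for the killed-Brownian-motion piece on $\seq{m=m_0}$, one obtains the smoothing exponent $\zeta=-1+\eta/2>-1$. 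Property \textbf{(H1)(v)} is an elementary consequence of the concentration of $\bar{f}_t(x_0,\cdot)$ at $x_0$ and the vanishing of $\bar{q}_t$ off the diagonal as $t\downarrow 0$, while \textbf{(H2)} follows from Fubini applied to $\bar{R}_\lambda^z=\int_0^\infty e^{-\lambda t}\bar P_t^z\,dt$ together with the integrability bounds of \textbf{(H1)(iv)}.

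The main obstacle is the smoothing estimate on the absolutely continuous piece: $\partial_{x_0}^2 \bar q^z_t$ produces an a priori time-singularity of order $t^{-2}$ that must be reduced to $t^{-1+\eta/2}$ by exploiting the diagonal H\"older regularity of $a$ together with the two-dimensional integration in $(x,m)$ via the change of variables above. Once this bound is secured with $\zeta>-1$, Theorem \ref{weak:uniqueness:result} delivers weak uniqueness for \eqref{sde:dynamics:maximum}.
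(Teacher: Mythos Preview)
Your proposal is correct and follows essentially the same route as the paper: verify \textbf{(H1)} and \textbf{(H2)} for the proxy $(\bar X_t,\bar M_t)$ with the two-piece density $\bar p^z_t$ relative to $\nu(x_0,m_0,dx,dm)$, compute $\bar\theta^z_t$ by pushing $\partial_{x_0},\partial_{x_0}^2$ onto $\bar p^z_t$, and obtain the smoothing exponent $\zeta=-1+\eta/2$ from the diagonal H\"older gain $|a(x_0,m_0)-a(x,m)|\le C(2m-x-x_0)^\eta$ together with the extra $\sqrt{t}$ coming from the double integration over $\{m_0<m,\,x\le m\}$. Your probabilistic check of the boundary condition $\partial_{m_0}\bar P_t g(m_0,m_0)=0$ via $\{\bar W_t<0\}=\emptyset$ is a nice alternative to the paper's direct computation; your treatments of \textbf{(H1)(v)} and \textbf{(H2)} are terser than the paper's (which, as in the local-time case, compares $\hat p_\varepsilon$ with a fixed-point $\bar p_\varepsilon$ via the mean value theorem and checks $\bar R_\lambda\mathcal{D}\subset\mathcal{D}$ by uniform-in-$t$ bounds on derivatives of $\bar P_t g$), but the ideas are the same.
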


Next we show that, given $(x_0, m_0) \in \mathcal{J}$, the law of $ (X^{x_0}_T, M_T)$ is absolutely continuous with respect to the measure $\nu(x_0,m_0,dx, dm)$. Our strategy is similar to the one used in the case of the SDE with its running local time, that is we establish a representation in infinite series of $P_t g$ from which stems an explicit representation of the density of the couple $(X^{x_0}_t, \ell_0 + L^0_t(X))$, see Theorem \ref{existence:bivariate:density:max} below. We first set
\begin{align}
\mathcal{S}_t g(x_0,m_0) &:= \int g(x,m) \left\{\frac12 (a(x_0,m_0) - a(x,m))(-2H_3)(a(x,m)t,2m-x-x_0)\right.\nonumber \\
& \quad  + b(x_0,m_0) (-2H_2)(a(x,m)t,2m-x-x_0)\bigg\}\I_{\{x \leq m\}}\I_{\{m_0 < m\}}dx dm \nonumber \\
& \quad + \int g(x,m_0) \left\{\frac12 (a(x_0,m_0) - a(x,m_0))( H_2(a(x,m_0)t, x-x_0) - H_2(a(x,m_0)t,2m_0-x-x_0) )\right.\label{semigroup:max:s} \\\nonumber 
& \quad + b(x_0,m_0)  ( H_1(a(x,m_0)t,2m_0-x-x_0) - H_1(a(x,m_0)t, x-x_0) )\bigg\}\I_{\{x< m_0\}} dx \\
& = \int g(x, m) \hat{\theta}_t(x_0,m_0, x, m) \nu(x_0,m_0, dx, dm) \nonumber
\end{align}

\noindent with 
\begin{align*}
&\hat{\theta}_t(x_0, m_0, x, m)  :=\\
& 
\begin{cases}
     \frac12 (a(x_0,m_0) - a(x, m))(-2H_3)(a(x,m)t,2m-x-x_0) + b(x_0,m_0) (-2H_2)(a(x,m) t,2m-x-x_0),  & x \leq m,\,  m_0 < m,\\
      \frac12 (a(x_0,m_0) - a(x,m_0))( H_2(a(x,m_0)t, x-x_0) - H_2(a(x,m_0)t,2m_0-x-x_0) ) & \, x< m_0, \, m=m_0. \\
         + b(x_0,m_0)  ( H_1(a(x, m_0) t, 2m_0-x-x_0) - H_1(a(x, m_0)t, x-x_0) ),
\end{cases}
\end{align*}

 We remark that the function $(x_0, m_0) \mapsto \mathcal{S}_t g(x_0, m_0)$ is continuous on $\mathcal{J}$. This is different from the case of the local time, where due to the presence of the sign function, we required that $b=0$ in order to ensure that the continuity of $(x_0, m_0) \mapsto \mathcal{S}_t g(x_0, m_0)$ on $\mathcal{J}$. 

One may now iterate the first step of the expansion obtained in Theorem \ref{t1}. More precisely, by applying Corollary \ref{corolllary:semigroup} and setting by convention $s_0=T$, we get
\begin{align}
P_T g(x_0, m_0) & = \hat{P}_T g(x_0, m_0) + \sum_{n \geq 1}  \int_{\Delta_n(T)} \hat{P}_{s_n} \mathcal{S}_{s_n - s_{n-1}} \cdots \mathcal{S}_{T-s_1} g(x_0, m_0) \, d\s_{n}. \label{semigroup:series:max}
\end{align}

We again observe the following convolution type property of the singular measure, namely
\begin{gather}
\nu(x_0,m_0, dx',dm') \nu(x',m', dx,dm) =  u(x_0,m_0,x,m,dx',dm' )\nu(x_0,m_0,dx,dm) \label{numax1}
\end{gather}
where we set 
\begin{align*}
& u(x_0,m_0,x,m,dx',dm')\\
&:= \begin{cases}
\I_{\{x'\vee x_0\vee m_0<m'\}}\I_{\{m'<m\}} dx'dm' + \I_{\{x'<m_0\}}dx'\delta_{m_0}(dm') + \I_{\{x'<m\}}dx'\delta_{m}(dm') & \,\, m_0 < m, x\leq m, \\
\I_{\{x' < m_0\}}dx'\delta_{m_0}(dm') &  \,\, m = m_0, x\leq m_0.
\end{cases}
\end{align*}
Then we examine the $n$-th term of the series expansion, and by using Fubini's theorem and recursively applying \eqref{numax1}, it can be expressed as 
\begin{align*}
& \int_{\Delta_n(T)} d\s_n\hat{P}_{s_n} \mathcal{S}_{s_{n-1}-s_n} \dots \mathcal{S}_{T-s_1}g(x_0, m_0) \\
& = \int_{\rr^2} g(x, m) \left\{ \int_{\Delta_n(T)} d\s_n \int_{(\rr^2)^{n}}\hat p_{s_n}(x_0, m_0, x_1, m_1) \right.\\
& \quad \left.\times \left[\prod_{i=1}^{n} \hat{\theta}_{s_{n-i}-s_{n-i+1}}(x_{i}, m_{i}, x_{i+1}, m_{i+1}) u(x_{0}, m_{0}, x_{i+1}, m_{i+1}, dx_{i},dm_{i})\right]\right\} \nu(x_0, m_0, dx, dm)\\
& =  \int_{\rr^2} g(x,m) p^{n}_T(x_0, m_0,x, m)\nu(x_0, m_0, dx, dm)
\end{align*}

\noindent where we set  
\begin{align}\label{pn:max}
p^n_T(x_0, m_0, x, m) := & 
 \begin{cases}
 \int_{\Delta_n(T)} d\s_n \int_{(\rr^2)^{n}}\hat p_{s_n}(x_0, m_0, x_1, m_1)\times \\
\left[\prod_{i=1}^{n} \hat{\theta}_{s_{n-i}-s_{n-i+1}}(x_{i}, m_{i}, x_{i+1}, m_{i+1}) u(x_{0}, m_{0}, x_{i}, m_{i}, dx_{i+1},dm_{i+1})\right] & n \geq 1, \\
\hat p_{T}(x_0,m_0, x,m) & n = 0.
\end{cases}
\end{align}

We are now ready to give a representation for the density of the couple $(X^{x_0}_t, M_t(X^{x_0}))$. As already mentioned in the case of the SDE with its running local time, the proof of the convergence of the asymptotic expansion for the transition density is not standard in the current setting. To overcome the main difficulty which again comes from the different nature of the two kernels in $\hat{\theta}_t$ one has to make use of the key estimate obtained in Lemma \ref{estimate:kernel:proxy}. This allows to obtain the convergence of the parametrix expansion for the transition density. Similar Gaussian upper bounds for this density are also established. The proof is given in the Appendix.

\begin{theorem}\label{existence:bivariate:density:max} Assume that \A{R-$\eta$} and \A{UE} hold for some $\eta \in (0,1]$. For $(x_0,m_0) \in  \mathcal{J}$, define the measure
\begin{align*}
p_T(x_0,m_0, dx, dm) & : = p_T(x_0,m_0, x, m)\nu(x_0,m_0,dx,dm)\\
					 & = p^{1}_T(x_0,m_0, x, m) \,  \I_\seq{x\leq m} \I_\seq{m_0 < m} dx dm +  p^{2}_T(x_0,m_0, x)\I_\seq{x\leq m_0}  \, dx \,\delta_{m_0}(dm)
\end{align*} 
\noindent with $p_T(x_0,m_0, x, m) := \sum_{n\geq 0} p^{n}_T(x_0, m_0, x, m)$ and 
\begin{align*}
p^{1}_T(x_0,m_0, x, m)  := \sum_{n\geq 0} p^{n}_T(x_0, m_0, x, m), \quad p^{2}_T(x_0,m_0, x) := \sum_{n\geq 0} p^{n}_T(x_0, m_0, x, m_0).
\end{align*}

Then, both series $p^{1}_T(x_0,m_0, x, m)$ and $p^{2}_T(x_0,m_0, x)$ converge absolutely and uniformly for $(x_0,m_0), (x, m)\in \mathcal{J}$. Moreover for $h \in \mathcal{C}_b(\mathcal{J})$, the following representation for the semigroup holds,
\begin{equation*}
P_Th(x_0,m_0) = \int_{\rr \times \rr_+}  \,h(x,m) \, p_T(x_0, m_0, dx, dm).
\end{equation*}

Therefore, for all $(x_0,m_0)\in \mathcal{J}$, the function $\mathcal{J} \ni (x,m) \mapsto p_T(x_0,m_0, x,m)$ is the probability density function with respect to the measure $\nu(x_0,m_0, dx, dm)$ of the random vector $(X^{x_0}_{T},m_0\vee M_T(X^{x_0}))$, where $X^{x_0}_T$ is the solution taken at time $T$ of the SDE \eqref{sde:dynamics:maximum} starting from $x_0$ at time $0$ and $M_T(X^{x_0})$ is its running maximum at time $T$ starting from $m_0$ at time $0$. 

Finally, for some positive $C,c>1$, for all $(x_0,m_0), (x,m)\in  \mathcal{J}$, the following Gaussian upper-bounds hold
\begin{equation}
\label{gaussian:upper:bound:max}
 p^{1}_T(x_0,m_0,x,m)   \leq C T^{-1/2} H_0(c T, 2m-x-x_0)\ \ \mbox{ and } \ \ p^{2}_T(x_0,\ell_0, x)   \leq  C H_0(c T, x-x_0).
\end{equation}

\end{theorem}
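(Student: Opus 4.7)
The plan is to follow the strategy already used in the proof of Theorem \ref{existence:bivariate:density:local:time}, to which the present situation is structurally identical once the correct Gaussian bounds on the relevant kernels are established. First I would derive pointwise estimates on the parametrix kernel $\hat{\theta}_t$. Combining the $\eta$-H\"older regularity of $a$, the boundedness of $b$, the identity $H_i(ct,y) = \partial_y^i g(ct,y)$ and the space-time inequality, I obtain on the non-singular branch $\{m > m_0,\, x\leq m\}$,
\begin{align*}
|\hat{\theta}_t(x_0, m_0, x, m)| \leq C\, t^{-\frac{3-\eta}{2}}\, H_0(ct, 2m-x-x_0),
\end{align*}
and on the singular branch $\{m = m_0,\, x\leq m_0\}$,
\begin{align*}
|\hat{\theta}_t(x_0, m_0, x, m_0)| \leq C\, t^{-1+\frac{\eta}{2}}\,\bigl\{H_0(ct, x-x_0) + H_0(ct, 2m_0 - x - x_0)\bigr\},
\end{align*}
for suitable $C, c > 1$; analogous bounds hold for $\hat{p}_t$. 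Integrating the first estimate against $\nu(x_0, m_0, dx, dm)$ shows that the smoothing property \eqref{hypiv} holds with exponent $\zeta = (\eta - 1)/2 > -1$, so the abstract framework of Section \ref{weak:uniqueness:framework} applies.

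Next, I would observe that $(x_0, m_0) \mapsto \mathcal{S}_t g(x_0, m_0)$ is continuous on $\mathcal{J}$, because the drift in \eqref{semigroup:max:s} appears only through differences of Hermite-type kernels that vanish at the discontinuities; this is the reason why, unlike in the local time example, no extra assumption on $b$ is required. Corollary \ref{corolllary:semigroup} then legitimizes the formal expansion \eqref{semigroup:series:max}, and the main work is the control of the iterated integrals $p^n_T$. I would proceed by induction on $n$ to establish a Gaussian estimate of the form
\begin{align*}
|p^n_T(x_0, m_0, x, m)| \leq C^{n+1}\,\frac{T^{n\eta/2}\,\Gamma(\eta/2)^n}{\Gamma(1 + n\eta/2)}\, K_T(x_0, m_0, x, m),
\end{align*}
where $K_T$ equals $T^{-1/2}H_0(cT, 2m-x-x_0)$ on the non-singular branch and $H_0(cT, x-x_0)$ on the singular branch. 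Summing over $n$ using the asymptotics of the Gamma function yields simultaneously the absolute and uniform convergence of $p^1_T$ and $p^2_T$ and the Gaussian upper-bounds \eqref{gaussian:upper:bound:max}. The semigroup representation then follows by Fubini and dominated convergence, and the density statement is immediate.

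The delicate step in the induction is the treatment of the cross terms in the convolution \eqref{pn:max} in which one factor of $\hat{\theta}$ lives in the non-singular branch and the adjacent factor lives in the singular branch; via the convolution identity \eqref{numax1}, these are precisely the contributions where an intermediate point $(x_{i+1}, m_{i+1})$ lies on one of the diagonals $m_{i+1} = m_0$ or $m_{i+1} = m$. A naive application of the pointwise bounds above produces a non-integrable time singularity of order $t^{-1}$ or worse, and this is the main obstacle to the convergence of the series. To overcome it I would invoke Lemma \ref{estimate:kernel:proxy}, which exploits the symmetry of the killed proxy density $\hat{f}_t(x_0, x) = H_0(a(x,m_0)t, x-x_0) - H_0(a(x,m_0)t, 2m_0 - x - x_0)$ under reflection about $m_0$ to gain an additional factor $t^{\eta/2}$ in the offending convolution, thereby restoring time-integrability and closing the inductive step. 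This is the only point at which the argument departs from the classical parametrix construction of \cite{frie:64,Menozzi}, and where the geometry of the running maximum enters in an essential way.
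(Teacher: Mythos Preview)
Your proposal is correct and follows essentially the same route as the paper: pointwise bounds on $\hat p_t$ and $\hat\theta_t$, continuity of $\mathcal S_tg$ so that Corollary \ref{corolllary:semigroup} applies, an inductive Gaussian bound on $p^n_T$ of the stated form, and summation via Lemma \ref{beta:type:integral}. The identification of the cross terms (singular branch meeting non-singular branch via \eqref{numax1}) as the only genuinely delicate step, and of Lemma \ref{estimate:kernel:proxy} as the tool that resolves it, matches the paper exactly.

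Two small imprecisions are worth flagging. First, the smoothing exponent is $\zeta=-1+\eta/2$, not $(\eta-1)/2$; this does not affect the argument since both exceed $-1$. Second, and more substantively, Lemma \ref{estimate:kernel:proxy} does not yield an extra factor $t^{\eta/2}$; the $\eta/2$ improvement is already present in $\hat\theta_t$ through the H\"older regularity of $a$. What Lemma \ref{estimate:kernel:proxy} actually provides is the factor $\bigl\{|m_0-x'|^\beta/t^{\beta/2}\wedge|m_0-x|^\beta/t^{\beta/2}\wedge 1\bigr\}$ for any $\beta\in[0,1]$. In the cross terms the paper takes $\beta=1$, feeds the spatial factor $|m_0-x'|$ (or $|m-x'|$) into the adjacent non-singular Gaussian via the space-time inequality, and thereby trades it for a factor $(t-s)^{1/2}$; combined with a case split $s\in(0,t/2)$ versus $s\in(t/2,t)$ this converts the naive $(t-s)^{-(3-\eta)/2}$ singularity into the required $(t-s)^{-1+\eta/2}t^{-1/2}$. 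You should also invoke Lemma \ref{convolution:local:time} for the purely non-singular convolution (the term the paper calls $A_1$), which you do not mention explicitly.
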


\vskip15pt
\section{Conclusion}
In this paper, we obtained weak existence and uniqueness for some SDEs with coefficients depending on some path-dependent functionals $(A_t(X))_{t\geq0}$ under mild assumptions on the coefficients, namely bounded measurable drift and uniformly elliptic H\"older-continuous diffusion coefficient. 
We illustrated our approach on two examples: an SDE with coefficients depending on its running local time and an SDE with coefficients depending on its running maximum. We also established the existence as well as a representation in infinite series of the density for the couple $(X_t,A_t(X))$ in both examples. Some Gaussian upper-bounds are also obtained. 

Obviously, a wide variety of Brownian functionals can be investigated. Simple extensions include for instance the case $A_t(X)=(\tau_L \wedge t, X_{\tau_L \wedge t})$, where $\tau_L=\inf\left\{ t\geq 0:  X_{t}\geq L \right\}$ is the first hitting time of the barrier $L$ by $X$ or the bivariate functional $A_t(W)=(\min_{0\leq s\leq t}X_s, \max_{0\leq s \leq t} X_s)$. More challenging extensions could include other type of processes. One notably may consider the case of a skew diffusion with path-dependent coefficients involving its local and occupation times, see Appuhamillage \& al. \cite{appuhamillage2011} for an expression of the trivariate density $(B^{(\alpha)}_t, L^{0}_t(B^{(\alpha)}), \Gamma^0_t(B^{(\alpha)}))$, $t\geq0$, where $(B^{(\alpha)}_t)_{t\geq0}$ is an $\alpha$-skew Brownian motion or reflected SDEs. This will be developed in future works.

\subsection*{Acknowledgement}
The authors wish to thank Professor Arturo Kohatsu-Higa for his careful readings and valuable comments on the writing of this paper.

\vskip15pt
\section{Appendix}

\subsection{Proof of Theorem \ref{weak:existence:local:time}}\label{first:part:appendix}\rule[-10pt]{0pt}{10pt}\\

From the Markov property of the Brownian motion $W$, one can deduce that $(\bar X_t, \ell_0+L^{0}_t(\bar X))_{t\geq 0}$ is a Markov process so that
$$
\mathbb{E}[f(\bar{X}_s+ \bar \sigma (W_{t+s}-W_s), \ell_0 + L^{0}_{s}(\bar X) + L^{0}_{t+s}(\bar{X}) - L^{0}_s(\bar{X}))|\mathcal{F}_s]  = \bar{P}_t f(\bar{X}_s, \ell_0 + L^{0}_{s}(\bar{X})).
$$
From the expression of $\bar{p}_t$, direct computations show that if $f\in \mathcal{C}^{\infty}_b(\rr \times \rr_+)$, then one has $\bar{P}_t f \in \mathcal{D}$, $t>0$ and $\partial_t \bar{P}_t f = \bar{\mathcal{L}} \bar{P}_t f = \bar{P}_t \bar{\mathcal{L}} f$, for $f\in \mathcal{D}$. Indeed, the latter formula holds since $\bar{\mathcal{L}}$ is the infinitesimal generator of $(\bar{P}_t)_{t\geq0}$ acting on $\mathcal{D}\subset \mathrm{Dom}(\mathcal{\bar L})$. One can also obtain the same result by using the generalised It\^o's formula of Proposition \ref{generalized:ito:lemma}. 

\smallskip
Now, in order to obtain $\bar{\theta}_t(x_0,\ell_0,x,\ell)$, we first remark that 
\begin{align*}
\forall t>0, \quad (\mathcal{L} - \bar{\mathcal{L}}) \bar{P}_t f(x_0,\ell_0) & = \frac12 (a(x_0,\ell_0)-\bar{a})\partial^2_1 \bar{P}_t f(x_0,\ell_0)  + b(x_0,\ell_0)\partial_1 \bar{P}_t f(x_0,\ell_0) \\
& = \int f(x,\ell) \left\{ \frac12 (a(z_0)- a(z_1)) \partial^2_{x_0} \bar{p}_t(x_0-,\ell_0, dx, d\ell)  + b(z_0) \partial_{x_0} \bar{p}_t(x_0-,\ell_0, dx, d\ell) \right\} 
\end{align*}

\noindent for $(x_0,\ell_0) \in \rr \times \rr_+$, where we used the Lebesgue differentiation theorem for the last equality. Simple computations yield
\begin{align*}
\partial_{x_0} \bar{p}_t(x_0-,\ell_0, dx, d\ell) & = \left\{\partial_{x_0}\bar{f}_t(x_0,x)  \I_{\{\ell=\ell_0\}}  + \partial_{x_0} \bar{q}_t (x_0-,\ell_0, x, \ell)  \I_\seq{\ell_0  < \ell} \right\}\nu(x_0,\ell_0, dx, d\ell)  \, , \\
\partial^2_{x_0} \bar{p}_t(x_0-,\ell_0, dx, d\ell) & =   \left\{\partial^2_{x_0} \bar{f}_t(x_0,x) \I_{\{\ell=\ell_0\}}   + \partial^2_{x_0} \bar{q}_t (x_0-,\ell_0, x, \ell)  \I_\seq{\ell_0 < \ell}\right\} \nu(x_0,\ell_0, dx, d\ell)
\end{align*}

\noindent with,
\begin{align*}
\partial_{x_0} \bar{f}_t(x_0,x) & = - (H_1(\bar{a}t, x-x_0) + H_1(\bar{a}t, x+x_0) ), \\ 
\partial^2_{x_0} \bar{f}_t(x_0,x) & = (H_2(\bar{a}t, x-x_0) - H_2(\bar{a}t, x+x_0) ), \\ 
\partial_{x_0} \bar{q}_t(x_0-,\ell_0, x, \ell) & =  -\frac{\mathrm{sign}(x_0)}{\bar a^{\frac 3 2}} H_2(t, \frac{|x|+|x_0|+ \ell-\ell_0}{\bar \sigma}), \\
\partial^2_{x_0} \bar{q}_t(x_0-,\ell_0, x, \ell) & =  -\frac{1}{\bar a^{2}} H_3(t, \frac{|x|+|x_0|+ \ell-\ell_0}{\bar \sigma}).
\end{align*}


Hence, one has $(\mathcal{L} - \bar{\mathcal{L}}) \bar{P}_t f(x_0,\ell_0) := \int f(x,\ell) \bar \theta^{z_1}_t(x_0,\ell_0, x, \ell) \nu(x_0,\ell_0, dx, d\ell)$ with 
\begin{align*}
& \bar \theta^{z_1}_t(x_0,\ell_0, x, \ell)  \\
& :=
\begin{cases}
    - \frac12 \frac{(a(z_0)-a(z_1))}{\bar a ^2}  H_3(t, \frac{|x|+|x_0|+ \ell-\ell_0}{\bar \sigma}) - b(z_0) \frac{\mathrm{sign}(x_0)}{\bar a^{\frac3 2}} H_2(t, \frac{|x|+|x_0|+ \ell-\ell_0}{\bar \sigma}),  &  \ell> \ell_0,\\
      \frac12 (a(z_0)-a(z_1))  \left\{H_2(\bar a t, x-x_0) - H_2(\bar{a}t, x+x_0)\right\} - b(z_0) \left\{H_1(\bar a t, x-x_0) + H_1(\bar{a}t,x+x_0)\right\},\, 
&       \ell=\ell_0.
\end{cases}
\end{align*}

We now prove that \A{H1} (iv) and (v) are satisfied. Before that, we point out that the inequality 
$$(x+x_0)^2=(x-x_0)^2+4x x_0\geq (x-x_0)^2$$
is valid on the set $\left\{ x x_0 \geq 0\right\}$ which implies $H_0(ct, x+x_0) \leq H_0(ct,x-x_0)$. We first prove \A{H1} (iv). Using the fact that $a$ is uniformly elliptic and bounded together with the space-time inequality 
%
%
we can bound $\bar\theta^{z_1}_t(x_0,\ell_0, x, \ell)$ as follows
\begin{align*}
|\bar\theta^{z_1}_t(x_0,\ell_0, x, \ell)|
& \leq 
\begin{cases}
C \left(\frac{1}{t^\frac{3}{2}}+ \frac{|b|_\infty}{t} \right) H_0(ct, |x_0| + |x|+ \ell-\ell_0) ,& \ell>\ell_0,  \\
C \left(\frac{1}{t}+  \frac{|b|_\infty}{t^{\frac{1}{2}}} \right) H_0(c t, x_0-x) ,  &  x_0x>0, \, \ell= \ell_0. 
\end{cases}
\end{align*}
Similar computation shows that $\hat\theta_t(x_0,\ell_0, x, \ell)$ can be bounded as follows
\begin{align*}
|\hat\theta_t(x_0,\ell_0, x, \ell)|
& \leq 
\begin{cases}
C \left( \frac{1}{t^{\frac{3-\eta}{2}}} + \frac{|b|_\infty}{t} \right)H_0(c t, |x_0| + |x|+ \ell-\ell_0) ,& \ell>\ell_0  \\
C \left( \frac{1}{t^{1-\frac{\eta}{2}}} +  \frac{|b|_\infty}{t^{\frac{1}{2}}} \right) H_0(c t, x_0-x),  &  x_0x>0, \, \ell= \ell_0. 
\end{cases}
\end{align*}

\noindent On the set $\{\ell > \ell_0\}$, the integral of $\hat\theta_t(x_0,\ell_0, x, \ell)$ against $\nu(x_0,\ell_0, dx,d\ell)$ can be estimated 
through integration by parts with respect to $\ell$ and using the fact that $t\leq T$, that is
\begin{align}
\left( \frac{1}{t^{\frac{3-\eta}{2}}} + \frac{|b|_\infty}{t} \right)\int_\rr  \int_{\ell_0}^\infty H_0(ct, |x_0| + |x|+ \ell-\ell_0)dxdl  & = C\frac{1}{t^\frac{3-\eta}{2}}\int_\rr  \int_{0}^\infty H_0(c t, |x_0| + |x| + \ell)d\ell dx\label{localtime_estimate_2}\\
																					  & = \frac{1}{t^\frac{3-\eta}{2}} \int_\rr  \int_{0}^\infty \ell (-H_1)(c t, |x_0| + |x| + \ell)d\ell dx \nonumber \\
  																					  & \leq C \frac{1}{t^\frac{3-\eta}{2}} \int_{0}^\infty\int^\infty_0   (\ell+|x_0|) (-H_1)(ct, |x_0| + x + \ell)dxd\ell \nonumber \\
  			    															          & = C\frac{1}{t^\frac{3-\eta}{2}} \int_{0}^\infty(\ell+|x_0|) H_0(ct, |x_0|  + \ell)d\ell \nonumber \\
  			    															          & \leq C\frac{|b|_\infty}{t^{1-\frac{\eta}{2}}}. \nonumber 
\end{align}
On the set $\left\{ x_0x>0 \right\}\cap \left\{\ell= \ell_0\right\}$, straightforward integration gives
\begin{align}
\int_\rr  \left(\frac{1}{t^{1-\frac{\eta}{2}}} + \frac{|b|_\infty}{t^{\frac{1}{2}}} \right)H_0(ct,x_0-x)dx
& \leq \frac{C(1+|b|_{\infty}t^{\frac{1-\eta}{2}})}{t^{1-\frac{\eta}{2}}}. \label{localtime_estimate_3}
\end{align}

From the above computations, we conclude that \A{H1} (iv) is satisfied. We now prove \A{H1} (v) that is,
\bde
\lim_{\varepsilon\rightarrow 0}\int_{\rr^2} f(x,\ell)\hat p_\varepsilon(x_0,\ell_0, dx,d\ell)  \rightarrow f(x_0,\ell_0), \quad f\in \mathcal{C}_b(\rr \times \rr_+).
\ede 
We consider the change of variable $\ell' = \ell-\ell_0$ and $x = x'$, and decompose $\int_{\rr^2} f(x,\ell)\hat p_\varepsilon(x_0,\ell_0, dx,d\ell)$ as follows
\begin{align*}
\int_{\rr\times [0,\infty)} f(x',\ell_0+ \ell')[\hat p_\varepsilon(x_0,0, dx',d\ell')- \bar p_\varepsilon(x_0,0, dx',d\ell')] + \int_{\rr\times [0,\infty)} f(x',\ell_0+ \ell')\bar p_\varepsilon(x_0,0,dx',d\ell') 
\end{align*}

\noindent where the frozen point in $\bar p_\varepsilon$ is given by $(x_1,\ell_1) = (x_0,0)$. It is clear from the continuity of $f$ that the second term converges to $f(x_0,\ell_0)$ as $\varepsilon \downarrow 0$. To show that the first term vanishes as $\varepsilon \downarrow 0$, we apply the mean value theorem
\begin{align*}
&|\int_{\rr\times [0,\infty)} f(x',\ell_0+ \ell')[\hat p_\varepsilon(x_0,0,dx',dl')- \bar p_\varepsilon(x_0,0,dx',d\ell')]| \\
&\leq C|f|_\infty \int_{\rr\times [0,\infty)}\varepsilon (|x_0-x'|^\eta + |\ell'|^\eta) \left\{|H_3(c \varepsilon, |x_0|+|x|+\ell')| + |H_2(c\varepsilon, |x_0| + |x|+ \ell')| \right\}dx'd\ell' \\
& \quad + C|f|_\infty \int_\rr \varepsilon|x_0-x'|^\eta [|H_2(c \varepsilon, x_0-x')| + |H_2(c \varepsilon, x_0 + x')|]\I_\seq{x_0x'>0}dx'\\
& \leq C\varepsilon^{\frac \eta 2}
\end{align*}

\noindent where the last inequality follows from the space-time inequality and computations similar to \eqref{localtime_estimate_2} and \eqref{localtime_estimate_3}.

\vskip 10pt 
We now prove that \A{H2} holds. Let $g\in \mathcal{D}$. Using the expression of the measure $(x_0,\ell_0) \mapsto \bar{p}_t(x_0,\ell_0, dx, d\ell)$, we obtain that $(x_0,\ell_0) \mapsto \bar{P}_t g(x_0,\ell_0) \in \mathcal{C}^{2,1}_b(\rr \backslash{\left\{0\right\}}, \rr_+, \rr)$ and some simple computations (that we omit) shows that 
$$
\frac{\partial_1 \bar P_t g(0+,\ell_0) - \partial_1 \bar P_t g(0-,\ell_0)}{2} = \int g(x,\ell_0) (-H_1)(\bar{a}t,|x|) dx - \int \frac{g(x,\ell) }{\bar a^{\frac 3 2}} H_2(t, \frac{|x|+\ell-\ell_0}{\bar \sigma}) \I_\seq{\ell \geq \ell_0}dx d\ell = - \partial_2 \bar P_t g(0,\ell_0).
$$

Moreover, using integration by parts formula, one shows that $(x_0,\ell_0) \mapsto \partial_1 \bar P_t g(x_0,\ell_0), \, \partial^2_1\bar P_t g(x_0,\ell_0), \, \partial_2 \bar P_t g(x_0,\ell_0)$ are bounded by a constant depending of $|\partial_1 g|_\infty,\, |\partial^2_1 g|_\infty, \, |\partial_2 g|_\infty$ which is uniform in $t$. Hence, one has $\bar R_\lambda \mathcal{D} \subset \mathcal{D}$ and $(\mathcal{L} - \bar{\mathcal{L}}) \bar R_\lambda g = \int_0^{\infty} e^{-\lambda t} (\mathcal{L} - \bar{\mathcal{L}}) \bar{P}_t g dt$ for $g \in \mathcal{D}$. The relation $(\lambda - \bar{\mathcal{L}}) \bar R_\lambda g = \bar{R}_\lambda (\lambda - \bar{\mathcal{L}}) g$ follows from the fact that $ \bar{P}_t \bar{\mathcal{L}} g = \partial_t \bar{P}_t g$ which in turn is a consequence of the generalised It\^o lemma obtained in Proposition \ref{generalized:ito:lemma} and the semigroup property satisfied by $(\bar{P}_t)_{t\geq0}$. We conclude that \A{H1} and \A{H2} are satisfied. The proof is now complete.

\subsection{Proof of Theorem \ref{existence:bivariate:density:local:time}}$\,$\\\label{second:part:appendix}

As already mentioned, we prove the result for $b\equiv0$. In order to include a drift, we refer to Remark \ref{bounded:measurable:drift}. We examine the $n$-th term of the series \eqref{semigroup:series:local:time} and prove an important smoothing property of the kernel. More precisely, let $x = x_{n+1}$, $\ell = \ell_{n+1}$ and $s_0 = T$, we claim the following key inequality
\begin{align}
 | \int_{(\rr\times \rr_+)^{n}}  & \hat p_{s_n}(x_0,\ell_0, x_1, \ell_1) \left\{\prod_{i=1}^{n} \hat{\theta}_{s_{n-i}-s_{n-i+1}}(x_{i}, \ell_{i}, x_{i+1}, \ell_{i+1}) u(x_{0},\ell_{0}, x_{i+1},\ell_{i+1}, dx_{i},d\ell_{i})\right\} | \label{recursive:bound:local:time}\\
&  \leq  \prod_{i=1}^{n} C (s_{i-1}-s_{i})^{-1+\frac\eta 2}\times  \left\{ \frac{1}{T^{\frac12}} H_0(c T,  |x| + |x_0| + \ell - \ell_0) \I_\seq{\ell_0< \ell }\right. \nonumber \\
& \qquad \qquad \left. + \left\{\frac{|x|^{\beta}}{T^{\frac{\beta}{2}} } \wedge \frac{|x_0|^{\beta}}{T^{\frac{\beta}{2}} }  \wedge 1  \right\} H_0(c T, x - x_0) \I_\seq{xx_0 \geq 0} \I_\seq{\ell=\ell_0} \right\}\nonumber 
\end{align}

\noindent for any $\beta \in [0,1]$. From the previous bound and Lemma \ref{beta:type:integral} we deduce that 
\begin{align*}
& |p^n_T(x_0,\ell_0,x,\ell)|\\
& \leq |\int_{\Delta_n(T)}  d\s_n  \int_{(\rr\times \rr_+)^{n}}  \hat p_{s_n}(x_0,\ell_0, x_1, \ell_1) \left\{\prod_{i=1}^{n} \hat{\theta}_{s_{n-i}-s_{n-i+1}}(x_{i}, \ell_{i}, x_{i+1}, \ell_{i+1}) u(x_{0},\ell_{0}, x_{i+1},\ell_{i+1}, dx_{i},d\ell_{i})\right\} | \nonumber \\
&  \leq  \int_{\Delta_n(T)} d\s_n \prod_{i=1}^{n} C (s_{i-1}-s_{i})^{-1+\frac\eta 2} \times \left\{ \frac{1}{T^{\frac12}} H_0(c T,  |x| + |x_0| + \ell - \ell_0) \I_\seq{\ell_0< \ell } \right.\\
&\left.\qquad \qquad  \qquad  \qquad   \qquad    +  \left\{\frac{|x|^{\beta}}{T^{\frac{\beta}{2}} } \wedge \frac{|x_0|^{\beta}}{T^{\frac{\beta}{2}} }  \wedge 1 \right\} H_0(c T, x - x_0) \I_\seq{xx_0 \geq 0} \I_\seq{\ell=\ell_0}  \right\} \\
& = \frac{(C T^{\eta/2} \Gamma(\eta/2))^N}{\Gamma(1+N \eta/2)}\left\{ \frac{1}{T^{\frac12}} H_0(c T,  |x| + |x_0| + \ell - \ell_0)  \I_\seq{\ell_0< \ell } +  \left\{\frac{|x|^{\beta}}{T^{\frac{\beta}{2}} } \wedge \frac{|x_0|^{\beta}}{T^{\frac{\beta}{2}} }  \wedge 1  \right\} H_0(c T, x - x_0)  \I_\seq{xx_0 \geq 0} \I_\seq{\ell=\ell_0}  \right\} 
\end{align*}

\noindent with the convention $s_0=T$.
Hence, from Fubini's theorem, the semigroup series obtained from Corollary \ref{corolllary:semigroup} admits the following integral representation
$$
P_T g(x_0,\ell_0)  = \int_{\rr \times \rr_+} g(x,\ell) \left(\sum_{n\geq 0} p^{n}_T(x_0,\ell_0, x,\ell) \right) \, \nu(x_0,\ell_0, dx, d\ell)
$$
\noindent where $p^{n}_T(x_0,\ell_0, x, \ell)$ is given by \eqref{pn:local:time}.
Moreover, from the above inequality, for any $(x_0,\ell_0)$, $(x,\ell) \in \rr \times \rr_+$, one gets the following Gaussian upper bounds
\begin{align*}
&\Big| \sum_{n\geq0} p^{n}_T(x_0,\ell_0,x,\ell) \Big|  \\
& \leq C_T \left\{ \frac{1}{T^{\frac12}} H_0(c T,  |x| + |x_0| + \ell - \ell_0)  \I_\seq{\ell_0< \ell } +  \left\{\frac{|x|^{\beta}}{T^{\frac{\beta}{2}} } \wedge \frac{|x_0|^{\beta}}{T^{\frac{\beta}{2}} }  \wedge 1  \right\} H_0(c T, x - x_0)  \I_\seq{xx_0 \geq 0} \I_\seq{\ell=\ell_0}  \right\}
\end{align*}
\noindent where $C_T :=\sum_{N\geq1} (CT^{\eta/2}\Gamma(\eta/2))^N/\Gamma(1+N \eta/2) < \infty$, for some constants $C, c>1$. 

The proof will be complete once we prove \eqref{recursive:bound:local:time}. We proceed by induction and show that for $j = 1,\dots, n$, the following estimate holds
\begin{align}
| \int_{(\rr\times \rr_+)^{j}} & \hat p_{s_{n}}(x_0,\ell_0, x_1, \ell_1) \left\{\prod_{i=1}^{j} \hat{\theta}_{s_{n-i}-s_{n-i+1}}(x_{i}, \ell_{i}, x_{i+1}, \ell_{i+1}) u(x_{0},\ell_{0}, x_{i+1},\ell_{i+1}, dx_{i},d\ell_{i})\right\} | \nonumber \\
&  \leq  \prod_{i=1}^{j} C (s_{n-i}-s_{n-i+1})^{-1+\frac\eta 2} \left\{( \frac{1}{s_{n-j}^{\frac12}} H_0(c s_{n-j},  |x_{j+1}| + |x_0| + \ell_{j+1} - \ell_0) \I_\seq{\ell_0< \ell_{j+1} } \right. \nonumber \\
& \quad  \left.+  \left\{\frac{|x_{j+1}|^{\beta}}{s_{n-j}^{\frac{\beta}{2}} } \wedge \frac{|x_0|^{\beta}}{s_{n-j}^{\frac{\beta}{2}} }  \wedge 1  \right\} H_0(c s_{n-j}, x_{j+1} - x_0) \I_\seq{x_{j+1}x_0 \geq 0} \I_\seq{\ell_{j+1}=\ell_0} \right\} .\label{induction:hypothesis:local:time:prev}
\end{align}

We start by proving a one step estimate, namely we compute an upper bound for
\begin{align}
\label{first:step:local:time}
\int_{\rr \times \rr_+} & \hat{p}_{s_n}(x_0,\ell_0, x_1, \ell_1)  \hat{\theta}_{s_{n-1}-s_{n}}(x_1,\ell_1, x_{2}, \ell_{2}) u(x_{0},\ell_{0}, x_2,\ell_2,dx_1, d\ell_1).
\end{align}

This term can be decomposed as follows $A_1 \I_\seq{\ell_2=\ell_0} \I_\seq{x_2 x_0 \geq 0} + (A_2+A_3+A_4)\I_\seq{\ell_0 < \ell_2}$. More precisely, on the set $\seq{\ell_2=\ell_0} \cap \seq{x_2 x_0 \geq 0}$, equation \eqref{first:step:local:time} is equal to
$$
\int_{\rr } \hat{p}_{s_n}(x_0,\ell_0, x_1, \ell_0)  \hat{\theta}_{s_{n-1}-s_{n}}(x_1,\ell_1, x_{2}, \ell_{1}) \I_\seq{x_0 x_1 \geq 0} dx_1 := A_1.
$$

On the set $\seq{\ell_0 < \ell_2}$, equation \eqref{first:step:local:time} is equal to
\begin{align*}
&\int_{\rr \times \rr_+}  \hat{p}_{s_n}(x_0,\ell_0, x_1, \ell_1)  \hat{\theta}_{s_{n-1}-s_{n}}(x_1,\ell_1, x_{2}, \ell_{2}) \I_\seq{x_1 x_0 \geq0} dx_1 \delta_{\ell_0}(d\ell_1) \\
& + \int_{\rr \times \rr_+}  \hat{p}_{s_n}(x_0,\ell_0, x_1, \ell_1)  \hat{\theta}_{s_{n-1}-s_{n}}(x_1,\ell_1, x_{2}, \ell_{2}) \I_\seq{x_2 x_1 \geq0} dx_1 \delta_{\ell_2}(d\ell_1) \\
& + \int_{\rr \times \rr_+}  \hat{p}_{s_n}(x_0,\ell_0, x_1, \ell_1)  \hat{\theta}_{s_{n-1}-s_{n}}(x_1,\ell_1, x_{2}, \ell_{2}) \I_\seq{\ell_0 \leq \ell_1 \leq \ell_2} dx_1 d\ell_1 \\
& := A_2 + A_3 + A_4
\end{align*}

From the space-time inequality and Lemma \ref{estimate:kernel:proxy}, for all $\beta\in [0,1]$, one has
\begin{align*}
|A_1| & \leq C \int_{\rr} \left\{\frac{|x_{1}|^{\beta}}{s_n^{\frac\beta 2}}\wedge \frac{|x_{0}|^{\beta}}{s_n^{\frac\beta 2}} \wedge 1 \right\} \I_\seq{x_{1} x_0 \geq 0} H_0(cs_n, x_1-x_0) \left\{\frac{|x_{2}|^{\beta}}{(s_{n-1}-s_n)^{\frac{2  - \eta + \beta}{2}}} \wedge \frac{|x_{1}|^{\beta}}{(s_{n-1}-s_n)^{\frac{2  - \eta + \beta}{2}}} \wedge \frac{1}{(s_{n-1}-s_n)^{1-\frac\eta 2}}   \right\}  \\
& \quad \times H_0(c(s_{n-1}-s_n), x_{2}-x_1) dx_1 \\
& \leq \frac{C}{(s_{n-1}-s_{n})^{1-\frac{\eta}{2}}}  \left\{\frac{|x_{2}|^{\beta}}{s^{\frac{\beta}{2}}_{n-1}} \wedge \frac{|x_{0}|^{\beta}}{s^{\frac{\beta}{2}}_{n-1} } \wedge 1  \right\} H_{0}(cs_{n-1}, x_{2}-x_0)
\end{align*}
\noindent where we separated the two cases $s_{n} \in (0,s_{n-1}/2)$ and $s_{n} \in (s_{n-1}/2,s_{n-1})$ for the last inequality. Indeed, if $s_{n} \in (0,s_{n-1}/2)$, one has $(s_{n-1}-s_{n}) \asymp s_{n-1}$ so that using the inequality 
$$\frac{|x_{2}|^{\beta}}{(s_{n-1}-s_n)^{\frac{2  - \eta + \beta}{2}}} \wedge \frac{|x_{1}|^{\beta}}{(s_{n-1}-s_n)^{\frac{2  - \eta + \beta}{2}}} \wedge \frac{1}{(s_{n-1}-s_n)^{1-\frac\eta 2}}    \leq \frac{C}{(s_{n-1}-s_{n})^{1-\frac{\eta}{2}}} \left\{\frac{|x_{2}|^{\beta}}{s_{n-1}^{\frac{\beta}{2}}} \wedge 1\right\}$$
 and the semigroup property of the Gaussian kernel, one gets 
$$
|A_1| \leq  \frac{C}{(s_{n-1}-s_{n})^{1-\frac{\eta}{2}}}  \left\{\frac{|x_{2}|^{\beta}}{s^{\frac{\beta}{2}}_{n-1}} \wedge 1  \right\} H_{0}(cs_{n-1}, x_{2}-x_0).
$$ 

To obtain the bound with $|x_0|^{\beta}/s^{\frac{\beta}{2}}_{n-1}$, we notice that $|x_1|^{\beta} \leq C(|x_1-x_0|^{\beta}+|x_0|^{\beta})$ and use the following bound 
\begin{align*}
|A_1| & \leq C \frac{|x_{0}|^{\beta}}{s^{\frac{\beta}{2}}_{n-1}} \int_{\rr} \ \frac{1}{s_n^{\frac\beta 2}}  \I_\seq{x_{1} x_0 \geq 0} H_0(cs_n, x_1-x_0) 
 \frac{|x_{1}-x_0|^{\beta}}{(s_{n-1}-s_n)^{1-\frac{ \eta }{2}}}  H_0(c(s_{n-1}-s_n), x_{2}-x_1) dx_1 \\
 & \quad  +C  \frac{|x_0|^{\beta}}{s^{\frac{\beta}{2}}_{n-1}} \int_{\rr} \  \I_\seq{x_{1} x_0 \geq 0} H_0(cs_n, x_1-x_0) 
 \frac{1}{(s_{n-1}-s_n)^{1-\frac{ \eta }{2}}}  H_0(c(s_{n-1}-s_n), x_{2}-x_1) dx_1 \\
 & \leq \frac{C}{(s_{n-1}-s_{n})^{1-\frac{\eta}{2}}}   \frac{|x_{0}|^{\beta}}{s^{\frac{\beta}{2}}_{n-1} }  H_{0}(cs_{n-1}, x_{2}-x_0)
\end{align*}

\noindent where we used the space-time inequality for the last inequality. This proves the desired bound for $s_{n}\in(0,s_{n-1}/2)$ and the second case $s_{n}\in (s_{n-1}/2,s_{n-1})$ follows from similar arguments.

Again from Lemma \ref{estimate:kernel:proxy}, with $\beta = 1$, one has 
\begin{align*}
|A_2| & \leq C \int_{\rr } \left\{\frac{|x_{1}|^{\beta}}{|s_n|^{\frac\beta 2}} \wedge \frac{|x_{0}|^{\beta}}{|s_n|^{\frac\beta 2}} \wedge  1 \right\}  H_0(c s_{n}, x_1-x_0) \I_\seq{x_1 x_0 \geq0} \frac{1}{(s_{n-1}-s_{n})^{\frac{3-\eta}{2}}} H_0(c(s_{n-1}-s_n),|x_{2}|+|x_1|+ \ell_{2}-\ell_0) dx_1 \\
& \leq C \left\{\frac{1}{s^{\frac{\beta}{2}}_{1}(s_{n-1}-s_{n})^{\frac{3-\eta-\beta}{2}}} \wedge \frac{1}{(s_{n-1}-s_{n})^{\frac{3-\eta}{2}}}\right\} H_0(cs_{n-1},|x_{2}|+x_0+\ell_{2}-\ell_0) \\
& \leq \frac{C}{(s_{n-1}-s_n)^{1-\frac{\eta}{2}}} \frac{1}{s^{\frac12}_{n-1}} H_0(c s_{n-1},|x_{2}|+x_0+ \ell_{2}-\ell_0)
\end{align*}

\noindent where we used the inequality $\frac{1}{s^{\frac{\beta}{2}}_{n}(s_{n-1}-s_{n})^{\frac{3-\eta-\beta}{2}}} \wedge \frac{1}{(s_{n-1}-s_{n})^{\frac{3-\eta}{2}}} \leq  \frac{1}{(s_{n-1}-s_{n})^{\frac{3-\eta}{2}}} \leq C\frac{1}{(s_{n-1}-s_{n})^{\frac{1-\eta}{2}}}  \frac{1}{s^{\frac12}_{n-1}}$ for $s_{n} \in (0,s_{n-1}/2)$ and $\frac{1}{s^{\frac{\beta}{2}}_{n}(s_{n-1}-s_{n})^{\frac{3-\eta-\beta}{2}}} \wedge \frac{1}{(s_{n-1}-s_{n})^{\frac{3-\eta}{2}}} \leq  \frac{1}{s^{\frac{\beta}{2}}_{n}(s_{n-1}-s_{n})^{\frac{3-\eta-\beta}{2}}} \leq \frac{C}{(s_{n-1}-s_{n})^{1-\frac{\eta}{2}}}  \frac{1}{s^{\frac12}_{n}} \leq \frac{C}{(s_{n-1}-s_{n})^{1-\frac{\eta}{2}}}  \frac{1}{s^{\frac12}_{n-1}}$ for $s_{n} \in (s_{n-1}/2,s_{n-1})$.  \\

Similarly, from Lemma \ref{estimate:kernel:proxy} with $\beta=1$, one has
\begin{align*}
| A_3 | &  \leq  \int_{\rr} \frac{C}{s^{\frac12}_{n}} H_0(c s_n,|x_1|+|x_0|+\ell_{1}-\ell_0) \frac{1}{(s_{n-1}-s_{n})^{1-\frac{\eta}{2}}} \left\{\frac{|x_{1}|^{\beta}}{(s_{n-1}-s_n)^{\frac\beta 2}} \wedge 1\right\} H_0(c(s_{n-1}-s_n),x_{2}-x_1) \I_\seq{x_{2}x_1 \geq0} dx_{1} \\
& \leq  \frac{C}{(s_{n-1}-s_{n})^{1-\frac{\eta}{2}}}\frac{1}{s^{\frac12}_{n-1}} H_0(c s_{n-1}, |x_{2}|+|x_0| + \ell_{2}-\ell_0)
\end{align*}

\noindent where we separated the computations into the two cases $s_{n}\in (0,s_{n-1}/2)$ and $s_{n}\in (s_{n-1}/2, s_{n-1})$ and followed similar arguments to the previously cases.  
Finally, from Lemma \ref{convolution:local:time}, one has
\begin{align*}
|A_4| & \leq C \int_{\rr \times (\ell_0,\ell_{2}) }  \frac{1}{s^{\frac12}_{n}} H_0(c s_n,|x_1|+|x_0|+\ell_{1}-\ell_0)   \frac{1}{(s_{n-1}-s_{n})^{\frac{3-\eta}{2}}} H_0(c (s_{n-1}-s_{n}), |x_{2}| + |x_{1}|+\ell_{2}-\ell_1)  dx_{1} d\ell_1 \\
& \leq \frac{C}{(s_{n-1}-s_{n})^{1-\frac{\eta}{2}}} \frac{1}{s^{\frac12}_{n-1}}H_0(c s_{n-1}, |x_{2}| + |x_0| +\ell_{2}-\ell_0) .
\end{align*}

Combining all the previous computations, one gets
\begin{align}
|\int_{\rr \times \rr_+} & \hat{p}_{s_n}(x_0,\ell_0, x_1, \ell_1)  \hat{\theta}_{s_{n-1}-s_{n}}(x_1,\ell_1, x_{2}, \ell_{2}) u(x_{0},\ell_{0}, x_2,\ell_2,dx_1, d\ell_1)|  \nonumber \\
&  \leq \frac{C}{(s_{n-1}-s_{n})^{1-\frac\eta 2}}  \frac{1}{s^{\frac12}_{n-1}} H_0(c s_{n-1}, |x_2| + |x_0| + \ell_{2}-\ell_0) \, \I_\seq{\ell_0 < \ell_{2}} \nonumber \\
& \quad  + \frac{C}{(s_{n-1}-s_n)^{1-\frac\eta 2}}  \left\{\frac{|x_{2}|^{\beta}}{s^{\frac{\beta}{2}}_{n-1} } \wedge \frac{|x_{0}|^{\beta}}{s^{\frac{\beta}{2}}_{n-1} } \wedge 1  \right\} H_0(c s_{n-1}, x_{2}-x_0) \, \I_\seq{x_{2} x_0 \geq0} \I_\seq{\ell_{2}=\ell_0} \label{first:iteration}
\end{align}

\noindent for any $\beta\in [0,1]$ and for some positive constant $C$ depending only on the coefficient $\sigma$. 

 Now, we assume that the bound given in \eqref{induction:hypothesis:local:time:prev} is valid at step $j$ and we prove that a similar bound holds at step $j+1$, namely
\begin{align}
| \int_{(\rr\times \rr_+)^{j+1}} & \hat p_{s_n}(x_0,\ell_0, x_1, \ell_1) \left\{\prod_{i=1}^{j+1} \hat{\theta}_{s_{n-i}-s_{n-i+1}}(x_{i}, \ell_{i}, x_{i+1}, \ell_{i+1}) u(x_{0},\ell_{0}, x_{i+1},\ell_{i+1}, dx_{i},d\ell_{i})\right\} | \nonumber \\
&  \leq  \prod_{i=1}^{j+1} C (s_{n-i}-s_{n-i+1})^{-1+\frac\eta 2} \left\{ \frac{1}{s_{n-(j+1)}^{\frac12}} H_0(c s_{n-(j+1)},  |x_{j+2}| + |x_0| + \ell_{j+2} - \ell_0) \I_\seq{\ell_0< \ell_{j+2} }\right. \nonumber \\
& \quad  \left.+  \left\{\frac{|x_{j+2}|^{\beta}}{s_{n-(j+1)}^{\frac{\beta}{2}} } \wedge \frac{|x_0|^{\beta}}{s_{n-(j+1)}^{\frac{\beta}{2}} }  \wedge 1  \right\} H_0(c s_{n-(j+1)}, x_{j+2} - x_0) \I_\seq{x_{j+2}x_0 \geq 0} \I_\seq{\ell_{j+2}=\ell_0} \right\} .\label{induction:hypothesis:local:time:next}
\end{align}
From \eqref{induction:hypothesis:local:time:prev}, the left-hand side of \eqref{induction:hypothesis:local:time:next} is bounded by
\begin{align*}
 &  \prod_{i=1}^{j} C (s_{n-i}-s_{n-i+1})^{-1+\frac\eta 2} \int_{\rr \times \rr_+} \left\{ \frac{1}{s_{n-j}^{\frac12}} H_0(c s_{n-j},  |x_{j+1}| + |x_0| + \ell_{j+1} - \ell_0) \I_\seq{\ell_0< \ell_{j+1} }  +  \left\{\frac{|x_{j+1}|^{\beta}}{s_{n-j}^{\frac{\beta}{2}} } \wedge \frac{|x_0|^{\beta}}{s_{n-j}^{\frac{\beta}{2}} }  \wedge 1  \right\}\right. \\ 
 &  \times H_0(c s_{n-j}, x_{j+1} - x_0) \I_\seq{x_{j+1} x_0 \geq 0} \I_\seq{\ell_{j+1}=\ell_0} \Bigg\} \times \hat{\theta}_{s_{n-(j+1)}-s_{n-j}}(x_{j+1}, \ell_{j+1}, x_{j+2}, \ell_{j+2}) u(x_{0},\ell_{0}, x_{j+2},\ell_{j+2}, dx_{j+1},d\ell_{j+1})
\end{align*}

\noindent which is in turn equal to $\prod_{i=1}^{j+1} C (s_{n-i}-s_{n-i+1})^{-1+\frac\eta 2} \left\{A_1\I_\seq{x_{j} x_0\geq 0}\I_\seq{\ell_{j+2}=\ell_0} + (A_2+ A_3 + A_4) \I_\seq{\ell_0 < \ell_{j+2}} \right\}$ with
\begin{align*}
A_1 & := \int_{\rr} \left\{\frac{|x_{j+1}|^{\beta}}{s_{n-j}^{\frac\beta 2}} \wedge \frac{|x_{0}|^{\beta}}{s_{n-j}^{\frac\beta 2}} \wedge 1 \right\} H_0(cs_{n-j}, x_{j+1}-x_0) \I_\seq{x_{j+1}x_0 \geq 0} \frac{1}{(s_{n-(j+1)}-s_{n-j})^{1-\frac\eta 2}}  \\
& \quad \times \left\{  \frac{|x_{j+2}|^{\beta}}{(s_{n-(j+1)}-s_{n-j})^{\frac{\beta}{2}}}  \wedge \frac{|x_{j+1}|^{\beta}}{(s_{n-(j+1)}-s_{n-j})^{\frac{\beta}{2}}}  \wedge  1\right\} H_0(c(s_{n-(j+1)}-s_{n-j}), x_{j+2}-x_{j+1}) \I_\seq{x_{j+2} x_{j+1} \geq0} dx_{j+1}  \\
& \leq \frac{C}{(s_{n-(j+1)}-s_{n-j})^{1-\frac{\eta}{2}}}  \left\{\frac{|x_{j+2}|^{\beta}}{s^{\frac{\beta}{2}}_{n-(j+1)}} \wedge  \frac{|x_{0}|^{\beta}}{s^{\frac{\beta}{2}}_{n-(j+1)}} \wedge 1 \right\} H_{0}(cs_{n-(j+1)}, x_{j+2}-x_0) \I_\seq{x_{j+2}x_0 \geq0}
\end{align*}
\noindent where the last inequality follows from considering the two cases $s_{n-j}\in (0,s_{n-(j+1)}/2)$ and $s_{n-j} \in (s_{n-(j+1)}/2,s_{n-(j+1)})$ and following similar arguments to the one used in the first step. 

 Again from Lemma \ref{estimate:kernel:proxy}, for any $\beta\in [0,1]$, one has 
\begin{align*}
A_2 & :=  \int_{\rr } \left\{\frac{|x_{j+1}|^{\beta}}{s_{n-j}^{\frac\beta 2}} \wedge \frac{|x_{0}|^{\beta}}{s_{n-j}^{\frac\beta 2}} \wedge 1 \right\} H_0(cs_{n-j}, x_{j+1}-x_0) \I_\seq{x_{j+1}x_0 \geq 0} \frac{1}{(s_{n-(j+1)}-s_{n-j})^{\frac{3-\eta}{2}}} \\
& \quad \times H_0(c(s_{n-(j+1)}-s_{n-j}),|x_{j+2}|+|x_{j+1}|+ \ell_{j+2}-\ell_0) dx_{j+1}  \\
& \leq   \left\{\frac{1}{s^{\frac{\beta}{2}}_{n-j}(s_{n-(j+1)}-s_{n-j})^{\frac{3-\eta-\beta}{2}}} \wedge \frac{1}{(s_{n-(j+1)}-s_{n-j})^{\frac{3-\eta}{2}}}\right\}  H_0(cs_{n-(j+1)},|x_{j+2}|+ |x_0| +\ell_{j+2}-\ell_0) \\
& \leq \frac{C}{(s_{n-(j+1)}-s_{n-j})^{1-\frac\eta 2}}  \frac{1}{s^{\frac12}_{n-(j+1)}} H_0(c s_{n-(j+1)},|x_{j+2}|+|x_0|+ \ell_{j+2}-\ell_0) 
\end{align*}

\noindent where we again separated the computations into the two cases $s_{n-j}\in (0,s_{n-(j+1)}/2)$ and $s_{n-j}\in (s_{n-(j+1)}/2, s_{n-(j+1)})$. Similarly, from Lemma \ref{estimate:kernel:proxy} with $\beta=1$, one has
\begin{align*}
A_3&  := C \int_{\rr} \frac{1}{s^{\frac12}_{n-j}} H_0(c s_{n-j},|x_{j+1}|+|x_0|+\ell_{j+1}-\ell_0) \frac{1}{(s_{n-(j+1)}-s_{n-j})^{1-\frac{\eta}{2}}} \left\{\frac{|x_{j}|^{\beta}}{(s_{n-(j+1)}-s_{n-j})^{\frac\beta 2}} \wedge 1\right\} \\
& \quad \times H_0(c(s_{n-(j+1)}-s_{n-j}),x_{j+2}-x_{j+1}) \I_\seq{x_{j+1}x_{j+2} \geq 0} dx_{j+1} \\
& \leq  \frac{C}{(s_{n-(j+1)}-s_{n-j})^{1-\frac{\eta}{2}}}\frac{1}{s^{\frac12}_{n-(j+1)}} H_0(c s_{n-(j+1)}, |x_{j+2}|+|x_0| + \ell_{j+2}-\ell_0)
\end{align*}

\noindent where we followed similar arguments as done for the first step. Finally, from Lemma \ref{convolution:local:time}, one has
\begin{align*}
A_4 & := C \int_{\rr \times (\ell_0,\ell_{j}) }  \frac{1}{s^{\frac12}_{j+1}} H_0(c s_{n-j},|x_{j+1}|+|x_0|+\ell_{j+1}-\ell_0)   \frac{1}{(s_{n-(j+1)}-s_{n-j})^{\frac{3-\eta}{2}}}\\
&\quad  \times  H_0(c (s_{n-(j+1)}-s_{n-j}), |x_{j+2}| + |x_{j+1}|+\ell_{j+2}-\ell_{j+1})  dx_{j+1} d\ell_{j+1} \\
& \leq \frac{C}{(s_{n-(j+1)}-s_{n-j})^{1-\frac{\eta}{2}}} \frac{1}{s^{\frac12}_{n-(j+1)}}H_0(c s_{n-(j+1)}, |x_{j+2}| + |x_0| +\ell_{j+2}-\ell_0) .
\end{align*}
Hence \eqref{induction:hypothesis:local:time:next} is valid and therefore by induction, the estimate 	\eqref{induction:hypothesis:local:time:prev} holds for $j=1,\dots, n$. Now, the Gaussian bound \eqref{recursive:bound:local:time} follows from \eqref{induction:hypothesis:local:time:prev} by taking $j=n$ and applying the change of variable $k = n-i$.

\subsection{Proof of Theorem \ref{bivariate:density:maximum}}\rule[-10pt]{0pt}{10pt}\\

From the expression of $\bar{P}_t f$, we remark that $(x_0,m_0) \mapsto \bar{P}_t f(x_0,m_0) \in \mathcal{C}^{2,1}_b(\mathcal{J})$ if $f\in \mathcal{C}^{\infty}_b(\mathcal{J})$ and satisfies the condition $\partial_2 \bar{P_t}f(m_0,m_0)= \lim_{x_0 \uparrow m_0} \partial_2 \bar{P}_tf(x_0,m_0)=0$, $m_0 \in \rr$. Moreover, simple computations (that we omit here) yield $\partial_t \bar{P}_t f(x_0,m_0) = \bar{\mathcal{L}}\bar{P}_t f(x_0,m_0)$, $t>0$, $(x_0,m_0) \in \mathcal{J}$. Then, for $(x_0,m_0) \in \mathcal{J}$, we write
\begin{align*}
(\mathcal{L} - \bar{\mathcal{L}}) \bar{P}_t f(x_0,m_0) & =  \frac12(a(x_0,m_0)-a(x_1,m_1)) \partial^2_1 \bar{P}_t f(x_0,m_0) + b(x_0,m_0) \partial_1 \bar{P}_t f(x_0,m_0)\\
										& = \int_{\rr^2} f(x, m) \bar \theta^{(x_1,m_1)}_t(x_0, m_0, x, m) \nu(x_0,m_0, dx, dm)
\end{align*}

\noindent where $\nu(x_0,m_0, dx, dm)  =   \I_\seq{x\leq m} \I_\seq{m_0 < m} dx dm + \I_\seq{x\leq m_0}  dx\delta_{m_0}(dm)$ and 
\begin{align*}
&\bar{\theta}^{(x_1,m_1)}_t(x_0, m_0, x, m)  \\
& :=
\begin{cases}
     \frac12 (a(x_0,m_0) - a(x_1,m_1))(-2H_3)(\bar{a}t,2m-x-x_0) + b(x_0,m_0) (-2H_2)(\bar{a}t,2m-x-x_0),  & x \leq m,\,  m_0 < m,\\
      \frac12 (a(x_0,m_0) - a(x_1,m_1))( H_2(\bar{a}t, x-x_0) - H_2(\bar{a}t,2m_0-x-x_0) ) & \, x< m_0, \, m=m_0\\
         + b(x_0,m_0)  ( H_1(\bar{a}t,2m_0-x-x_0) - H_1(\bar{a}t, x-x_0) ).
\end{cases}
\end{align*}
\smallskip
Hence, we see that \A{H1} (i), (ii), (iii) hold. We now verify \A{H1} (iv) for $\bar{\zeta}=-1$. We proceed as in subsection \ref{first:part:appendix}. From \A{UE}, there exists positive constants $C,\, c>0$ such that for any $t>0$ and $(x_0,m_0) \in \mathcal{J}$, one has
$$
|\bar{p}_t(x_0,m_0, x, m)| \leq C  \left\{ (-H_1)(c t, 2m-x-x_0)\I_\seq{ x \leq m} \I_\seq{ m < m_0} + H_0(c t, x-  x_0)  \I_\seq{m = m_0} \right\}
$$

\noindent and the right-hand side of the above inequality is $\nu(x_0,m_0, dx, dm)$ integrable. In the same spirit, from \A{UE}, \A{HR} and the space-time inequality, we bound $\bar{\theta}^{(x_1,m_1)}_t(x_0, m_0, x, m)$ as follows
\begin{align*}
|\bar{\theta}^{(x_1,m_1)}_t(x_0, m_0, x, m)|  
& \leq  \left\{
    \begin{array}{ll}
       C\left(\frac{1}{t^{\frac32}}+ \frac{|b|_\infty}{t}\right) H_0(ct, 2m-x-x_0), \quad x \leq m,\,  m_0 < m,\\
      C \left(\frac{1}{t}+ \frac{|b|_\infty}{t^{\frac12}}\right) H_0(c t, x-x_0), \quad x< m_0, \, m=m_0.
    \end{array}
\right.
\end{align*}
\smallskip

Integrating with respect to $\nu(x_0,m_0, dx, dm)$ yields
\begin{align*}
\int_{\rr^2} |\bar{\theta}^{(x_1,m_1)}_t(x_0, m_0, x, m)| \nu(x_0,m_0, dx, dm) & \leq C\left(\frac{1}{t^{\frac32}}+ \frac{|b|_\infty}{t}\right) \int_{m_0}^{\infty} \int_{-\infty}^{m} H_0(ct, 2m-x-x_0) dx dm \\
& \quad  + C \left(\frac{1}{t}+ \frac{|b|_\infty}{t^{\frac12}}\right) \int_{-\infty}^{m_0} H_0(c t, x-x_0) dx \\
& \leq C\left(\frac{1}{t^{\frac32}}+ \frac{|b|_\infty}{t}\right) \int^{\infty}_{m_0} (m-m_0) H_0(ct, m-m_0) dm  + \frac{C(1+|b|_{\infty}t^{\frac12})}{t} \\
& \leq  \frac{C(1+|b|_{\infty}t^{\frac12})}{t}.
\end{align*}

Similarly, when we let the freezing point be the end point of the transition density, that is $(x_1,m_1)=(x, m)$, noting that $m-m_0 \leq 2m-x-x_0$, for $m_0\leq m$, $x_0\leq x$, from \A{HR}, \A{UE} and the space-time inequality we get
\begin{align*}
|\bar{\theta}^{(x,m)}_t(x_0, m_0, x, m)| 
& \leq  \left\{
    \begin{array}{ll}
       C\left(\frac{1}{t^{\frac{3-\eta}{2}}}+ \frac{|b|_\infty}{t}\right) H_0(ct, 2m-x-x_0), \quad x \leq m,\,  m_0 < m,\\
      C \left(\frac{1}{t^{1-\frac\eta 2}}+ \frac{|b|_\infty}{t^{\frac12}}\right) H_0(c t, x-x_0), \quad x< m_0, \, m=m_0.
    \end{array}
\right.
\end{align*}
\smallskip
The above estimate in turn implies
\begin{align*}
\int_{\rr^2} |\bar{\theta}^{(x,m)}_t(x_0, m_0, x, m)| \nu(x_0,m_0, dx,dm)&  \leq C \left(\frac{1}{t^{\frac{3-\eta}{2}}}+ \frac{|b|_\infty}{t}\right) \int_{m_0}^{\infty} \int_{-\infty}^{m} H_0(ct, 2m-x-x_0) dx dm  \\
& \quad + C \left(\frac{1}{t^{1-\frac\eta 2}}+ \frac{|b|_\infty}{t^{\frac12}}\right) \int_{-\infty}^{m_0} H_0(c t, x-x_0) dx \\
& \leq  C \left(\frac{1}{t^{\frac{3-\eta}{2}}}+ \frac{|b|_\infty}{t}\right) \int_{m_0}^{\infty} (m-m_0) H_0(ct, m-m_0) dm +  \frac{C_t}{t^{1- \frac{\eta}{2}}}\\
& \leq  \frac{C_t}{t^{1- \frac{\eta}{2}}}
\end{align*}
\smallskip
\noindent where in the second last inequality, we have applied integration by parts with respect to $x$ and taken $C_t= C(1+|b|_\infty t^{\frac{1-\eta}{2}})$. Hence, we conclude that \A{H1} (iv) is satisfied with $\bar \zeta=-1$ and $\zeta=-1+\eta/2$. Assumption \A{H1} (v), \A{H1} (vi), \A{H2} are obtained following the same arguments as in the case of the diffusion process and its running local time, therefore details are omitted.

\smallskip

\subsection{Proof of Theorem \ref{existence:bivariate:density:max}}\rule[-10pt]{0pt}{10pt}\\

In the model of the SDE with its running local time, we examine the $n$-th term of the series \eqref{semigroup:series:max} and we take the convention $x = x_{n+1}$, $m= m_{n+1}$ and $s_0=T$. We prove the following key inequality
\begin{align}
| \int_{(\rr^2)^{n}} & \hat p_{s_n}(x_0, m_0, x_1, m_1)
\left\{\prod_{i=1}^{n} \hat{\theta}_{s_{n-i}-s_{n-i+1}}(x_{i}, m_{i}, x_{i+1}, m_{i+1}) u(x_{0}, m_{0}, x_{i+1}, m_{i+1}, dx_{i},dm_{i})\right\} | \nonumber \\
&  \leq  \prod_{k=1}^{n} C (s_{k-1}-s_{k})^{-1+\frac\eta 2} \left\{ \frac{1}{T^{\frac12}} H_0(c T,  2m - x - x_0)  \I_\seq{x < m} \I_\seq{\ell_0< m } \right.  \nonumber\\
& \left. \quad +  \left\{\frac{|m-x|^{\beta}}{T^{\frac{\beta}{2}} } \wedge \frac{|m-x_0|^{\beta}}{T^{\frac{\beta}{2}} }  \wedge 1  \right\} H_0(c T, x - x_0) \I_\seq{x < m_0} \I_\seq{m= m_0} \right\} \label{recursive:bound:max}
\end{align}

\noindent for any $\beta \in [0,1]$. From the previous bound, we deduce that 
\begin{align*}
& |\int_{\Delta_n(T)}  d\s_n \int_{(\rr^2)^{n}}  \hat p_{s_n}(x_0, m_0, x_1, m_1) \times \left\{\prod_{i=1}^{n} \hat{\theta}_{s_{n-i}-s_{n-i+1}}(x_{i}, m_{i}, x_{i+1}, m_{i+1}) u(x_{0}, m_{0}, x_{i+1}, m_{i+1}, dx_{i},dm_{i})\right\} | \nonumber \\
&  \leq  \int_{\Delta_n(T)} d\s_n \prod_{k=1}^{n} C (s_{k-1}-s_{k})^{-1+\frac\eta 2} \left\{ \frac{1}{T^{\frac12}} H_0(c T,  2m - x - x_0)  \I_\seq{x < m} \I_\seq{m_0< m } \right.  \nonumber\\
& \left. \quad +  \left\{\frac{|m-x|^{\beta}}{T^{\frac{\beta}{2}} } \wedge \frac{|m-x_0|^{\beta}}{T^{\frac{\beta}{2}} }  \wedge 1  \right\} H_0(c T, x - x_0) \I_\seq{x < m_0} \I_\seq{m= m_0} \right\} \\
& = C(N,T)\left\{ \frac{1}{T^{\frac12}} H_0(c T,  2m - x - x_0)  \I_\seq{x < m} \I_\seq{m_0< m } +  \left\{\frac{|m-x|^{\beta}}{T^{\frac{\beta}{2}} } \wedge \frac{|m-x_0|^{\beta}}{T^{\frac{\beta}{2}} }  \wedge 1  \right\} H_0(c T, x - x_0) \I_\seq{x < m_0} \I_\seq{m= m_0} \right\}
\end{align*}

\noindent where, from Lemma \ref{beta:type:integral}, $C(N,T) := \frac{( CT^{\eta/2}\Gamma(\eta/2))^N}{\Gamma(1+N \eta/2)}$.
Hence, from Fubini's theorem, the semigroup series obtained from Corollary \ref{corolllary:semigroup} admits the following integral representation
$$
P_T g(x_0, m_0)  = \int_{\rr^2} g(x, m) \left(\sum_{n\geq 0} p^{n}_T(x_0, m_0, x, m) \right) \, \nu(x_0, m_0, dx, dm)
$$

\noindent where $p^{n}_T(x_0, m_0, x, m)$ is given by \eqref{pn:max}. Moreover, from the above inequality, for any $(x_0,m_0), (x,m) \in \mathcal{J}^2$, one gets the following Gaussian upper bounds
\begin{align}
|p_{T}(x_0,m_0, x,m)|&  : = |\sum_{n\geq 0} p^{n}_T(x_0, m_0, x, m)   | \nonumber \\
& \leq  C_T \left\{\frac{1}{\sqrt{T}}H_0(cT, 2m-x-x_0)  \I_\seq{ x \leq m} \I_\seq{m_0 < m}  + H_0(cT, x-x_0) \I_\seq{x< m_0} \I_\seq{m=m_0}\right\} \label{gaussian:bound:maximum}
\end{align}

\noindent where $C_T :=\sum_{N\geq1} (CT^{\eta/2}\Gamma(\eta/2))^N/\Gamma(1+N \eta/2) < \infty$, for some constants $C, c>1$. Hence it remains to prove \eqref{recursive:bound:max}. Since its proof is similar to the proof of \eqref{recursive:bound:local:time} in the case of local time, we briefly present the guidelines and omit technical details. First we note that from Lemma \ref{estimate:kernel:proxy} and the space-time inequality, the following estimates hold
\begin{align*}
|\hat {p}_t(x_0, m_0, x, m)|  & \leq  
\begin{cases}
       \frac{C}{t^{\frac{1}{2}}}H_0(ct, 2m-x-x_0), 					& \quad  x \leq m,\,  m_0 < m,\\
      C \left\{\frac{|m-x_0|^\beta}{t^{\frac{\beta}{2}}} \wedge \frac{|m-x|^\beta}{t^{\frac{\beta}{2}}} \wedge 1\right\} H_0(c t, x-x_0), & \quad x< m_0, \, m=m_0.\\
\end{cases}
\end{align*}

\noindent  and similarly 
\begin{align*}
|\hat {\theta}_t(x_0, m_0, x, m)|  & \leq  
\begin{cases}
       \frac{C}{t^{\frac{3-\eta}{2}}}H_0(ct, 2m-x-x_0), 					& \quad  x \leq m,\,  m_0 < m,\\
       \frac{C}{t^{1-\frac{\eta}{2}}} \left\{\frac{|m-x_0|^\beta}{t^{\frac{\beta}{2}}} \wedge \frac{|m-x|^\beta}{t^{\frac{\beta}{2}}} \wedge 1 \right\}H_0(c t, x-x_0), & \quad x< m_0, \, m=m_0\\
\end{cases}
\end{align*}
where $\beta \in [0,1]$ can be freely chosen. 
We proceed in a similar fashion to the case of the local time and first compute an upper bound for
\begin{gather}
\int_{\rr^2} \hat{p}_{s}(x_0, m_0, x', m')\hat{\theta}_{t-s}(x', m', x, m)u(x_0,m_0,x,m,dx',dm' ). \label{base:equation}
\end{gather}
In the current case of the maximum there are also four terms to consider. On the set $\left\{m_{0} < m, x\leq m\right\}$, we note that equation \eqref{base:equation} is equal to 
\begin{align*}	
 & \int_{\rr^2}\hat{p}_{s}(x_0, m_0, x', m')\hat{\theta}_{t-s}(x', m', x, m)\I_{\{x'\vee x_0\vee m_{0}<m'\}}  \I_\seq{x\vee x'\vee m' < m} \I_{\{m'<m\}} dx'dm'\\
 & + \int_{\rr^2}\hat{p}_{s}(x_{0}, m_{0}, x', m')\I_\seq{m_0 = m'}\hat{\theta}_{t-s}(x', m', x, m)\I_{\{x\vee x'\vee m'<m\}} \I_{\{x'<m_{0}\}}\delta_{m_0}(dm')dx'\\
 & + \int_{\rr^2}\hat{p}_{s}(x_{0}, m_{0}, x', m')\I_{\{x'\vee x_0\vee m_{0}<m'\}}\hat{\theta}_{t-s}(x', m', x, m) \I_\seq{m' = m} \I_{\{x'<m\}}\delta_{m}(dm')dx' \\
 & =: (A_1+ A_2 + A_3)(x_0,m_0,x,m)
\end{align*}
and, on the set $\left\{m = m_0 \right\}$, \eqref{base:equation} is equal to
\begin{align*}
 \int_{\rr^2} &\hat{p}_s(x_{0}, m_{0}, x', m')\hat{\theta}_{t-s}(x', m', x, m)\I_{\{x' < m_{0}\}}  \I_\seq{x<m'} \I_\seq{m = m'} \delta_{m_0}(dm')dx' \\
& = \I_\seq{x<m_0} \I_\seq{m_0 = m} \int_{\rr} \hat{p}_s(x_{0}, m_{0}, x', m_0)\hat{\theta}_{t-s}(x', m_0, x, m)dx'\\
& =: A_4 (x, m_0, x, m).
\end{align*}
From Lemma \ref{convolution:local:time}, one directly gets 
$$
|A_1| \leq \frac{1}{(t-s)^{1-\frac{\eta}{2}}}\frac{C}{\sqrt{t}} H_0(ct, 2m-x-x_0)\I_{\{x\vee x_0\vee m_0<m\}}.
$$
For the term $A_2$, we notice that $m_0 = m' < m$ and for $\beta= 1$, we obtain the following bound \begin{align*}
|A_2| & \leq \frac{C}{(t-s)^{\frac{3-\eta}{2}}} \int_\rr \left\{\frac{|m_0-x'|^\beta}{s^{\frac{\beta}{2}}}\wedge \frac{|m_0-x_0|}{s^{\frac{\beta}{2}}} \wedge 1 \right\}H_0(c s, x'-x_0)H_0(c(t-s), 2m-x-x') \I_{\{x\vee x'\vee m_0<m\}} \I_{\{x'<m_{0}\}}dx'\\
& \leq \frac{C}{(t-s)^{1-\frac{\eta}{2}+\frac{1}{2}}} \left\{ \frac{|m'-x' + m-x|^\beta}{s^{\frac{\beta}{2}}} \I_\seq{s\in (\frac{t}{2},t]}+ \I_\seq{s\in (0,\frac{t}{2})}  \right\} \\
& \quad \times \int_{\rr} H_0(c s, x'-x_0)  H_0(c(t-s), 2m-x-x')    \I_{\{x\vee x'\vee m_0<m\}} \I_{\{x'<m_{0}\}}dx'\\
& \leq \frac{C}{(t-s)^{1-\frac{\eta}{2}}}\frac{1}{t^{\frac12}}H_0(ct, 2m-x-x_0)\I_{\{x\vee x_0\vee m_0<m\}}
\end{align*}

\noindent where the last inequality follows from the fact that $m'-x' + m-x \leq 2m-x-x'$ and the space-time inequality in the case $s\in (\frac{t}{2},t)$ and Gaussian convolution together with $(t-s)\asymp t$ otherwise.

For the term $A_3$, we note that $m_0<m$ and $|m-x| \leq |m-x + m_0-x_0| \leq |2m-x-x_0|$. Hence, one has
\begin{align*}
|A_3|& \leq \frac{C}{(t-s)^{1-\frac{\eta}{2}}}    \int_{\rr}     \frac{1}{s^{\frac{1}{2}}}H_0(cs, 2m-x'-x_0) \left\{\frac{|m-x|^\beta}{(t-s)^\frac{\beta}{2}}\wedge \frac{|m-x'|^\beta}{(t-s)^\frac{\beta}{2}} \wedge 1 \right\}  H_0(c(t-s), x-x') \I_{\{x'\vee x_0\vee m_0<m\}} \I_{\{x'<m\}}dx'.
\end{align*}
On the set $s\in (\frac{t}{2}, t)$, by the Gaussian convolution, we obtain
\begin{align*}
|A_3| & \leq  \frac{C}{s^{\frac{1}{2}}}\frac{1}{(t-s)^{1-\frac{\eta}{2}}}H_0(ct, 2m-x-x_0)\I_{\{x\vee x_0\vee m_0<m\}} \leq  \frac{C}{(t-s)^{1-\frac{\eta}{2}}} \frac{1}{t^{\frac12}}H_0(ct, 2m-x-x_0)\I_{\{x\vee x_0\vee m_0<m\}}.
\end{align*}
On the set $s\in (0, \frac{t}{2})$, take $\beta = 1$ and by the space-time inequality, one gets
\begin{align*}
|A_3| & \leq     C\frac{1}{s^{\frac{1}{2}}}\frac{1}{(t-s)^{1-\frac{\eta}{2}}}\frac{|m-x|}{(t-s)^\frac{1}{2}} \int_{\rr} H_0(cs, 2m-x'-x_0)H_0(c(t-s), x-x') \I_{\{x'\vee x_0\vee m_0<m\}} \I_{\{x'<m\}}dx'\\
& \leq     C\frac{1}{s^{\frac{1}{2}}}\frac{1}{(t-s)^{1-\frac{\eta}{2}}}\frac{|2m-x-x_0|}{\sqrt{t}} \int_\rr H_0(cs, 2m-x'-x_0)H_0(c(t-s), x-x') \I_{\{x'\vee x_0\vee m_0<m\}} \I_{\{x'<m\}}dx'.\\
& \leq     C\frac{1}{(t-s)^{1-\frac{\eta}{2}}}\frac{1}{t^{\frac12}} H_0(ct, 2m-x-x_0)\I_{\{x\vee x_0\vee m_0<m\}} .
\end{align*}

\noindent For the term $A_4$, we note that $m_0 = m$ so that from Lemma \ref{estimate:kernel:proxy} one gets
\begin{align*}
|A_4| &\leq \frac{C}{(t-s)^{1-\frac{\eta}{2}}} \int_{\rr^2}  \left\{\frac{|m_0-x_0|^\beta}{s^\frac{\beta}{2}}\wedge \frac{|m_0-x'|^{\beta}}{s^{\frac{\beta}{2}}} \wedge 1\right\}H_0(cs, x'-x_0)  \\
&\quad \times \left\{\frac{|m-x|^{\beta}}{(t-s)^\frac{\beta}{2}} \wedge \frac{|m-x'|^{\beta}}{(t-s)^{\frac{\beta}{2}}} \wedge 1 \right\} H_0(c(t-s), x-x')  \I_{\{x' < m_{0}\}}  \I_\seq{x<m'} \I_\seq{m_0 = m} \delta_{m_0}(dm')dx' \\
&\leq \frac{C}{(t-s)^{1-\frac{\eta}{2}}} \int_{\rr^2}\left\{ \left\{ \frac{|m-x_0|^\beta}{s^\frac{\beta}{2}} \wedge ( \frac{|m-x|^{\beta}+|x-x'|^{\beta}}{s^\frac{\beta}{2}} \times ( \frac{|m-x|^{\beta}}{(t-s)^{\frac{\beta}{2}}} \wedge 1) ) \wedge 1\right\} \I_\seq{s\in (\frac{t}{2},t)} \right.  \\
& \quad \left. + \left\{ \frac{|m-x|^\beta}{(t-s)^\frac{\beta}{2}} \wedge (\frac{|m-x_0|^{\beta}+|x_0-x'|^{\beta}}{(t-s)^\frac{\beta}{2}} \times ( \frac{|m-x_0|^{\beta}}{s^{\frac{\beta}{2}}} \wedge 1) ) \wedge 1\right\} \I_\seq{s\in (0,\frac{t}{2})} \right\} \\
& \quad  \quad \times H_0(cs, x'-x_0) H_0(c(t-s),x-x')\I_{\{x' < m_{0}\}}  \I_\seq{x<m'} \I_\seq{m_0 = m} \delta_{m_0}(dm')dx'  \\
& \leq \frac{C}{(t-s)^{1-\frac{\eta}{2}}} \left\{ \frac{|m-x_0|^{\beta}}{t^{\frac{\beta}{2}}} \wedge \frac{|m-x|^{\beta}}{t^{\frac{\beta}{2}}} \wedge 1 \right\}H_0(ct, x-x_0)\I_\seq{x<m_0} \I_\seq{m_0 = m}.
\end{align*}

Therefore summarising the above, the following Gaussian upper bound holds
\begin{align*}
| \int_{\rr^2} \hat{p}_{s}(x_0, m_0, x', m') & \hat{\theta}_{t-s}(x', m', x, m)u(x_0,m_0, x, m, dx',dm' ) | \\ 
 & \leq  
	\begin{cases}
       \frac{C}{(t-s)^{1-\frac{\eta}{2}}}\frac{1}{t^{\frac12}}H_0(ct, 2m-x-x_0),&  \quad x \leq m,\,  m_0 < m,\\
      \frac{C}{(t-s)^{1-\frac{\eta}{2}}}  \left\{ \frac{|m-x|^{\beta}}{t^{\frac{\beta}{2}}} \wedge \frac{|m-x_0|^\beta}{t^\frac{\beta}{2}} \wedge 1 \right\} H_0(ct, x-x_0), & \quad x< m_0, \, m=m_0.
\end{cases}
\end{align*}

\noindent for any $\beta \in [0,1]$.
By an induction argument which is similar to the case of local time, one gets 
\begin{align*}
& |p^n_{T}(x_0,m_0, x,m)| & \leq  
\begin{cases}
       C^n (\int_{\Delta_{n}(T)} d\s_n \,\, \prod_{k=1}^{n} (s_{k-1}-s_{k})^{-1 + \frac{\eta}{2}})    \times \frac{1}{\sqrt{T}}H_0(cT, 2m-x-x_0),&  \quad x\leq m,\,  m_0 < m,\\
      C^n  (\int_{\Delta_{n}(T)} d\s_n \,\, \prod_{k=1}^{n} (s_{k-1}-s_{k})^{-1 + \frac{\eta}{2}})    \times H_0(cT, x-x_0) , & \quad x< m_0, \, m=m_0.
\end{cases}
\end{align*}

We omit technical details. Hence from Lemma \ref{beta:type:integral} and the asymptotic property of the Gamma function, the Gaussian upper bound \eqref{gaussian:bound:maximum} for the transition density is valid. This concludes the proof.

\vskip 10pt
\subsection{Some useful technical results}
\begin{lem}\label{beta:type:integral} Let $b>-1$ and $a\in [0,1)$. Then for any $t_0>0$,
$$
\int_{\Delta_n(t_0)}\,d\mathbf{t}_n  \,\,t^{b}_n \prod_{j=0}^{n-1} (t_j-t_{j+1})^{-a} = \frac{t_0^{b+n(1-a)} \Gamma^{n}(1-a)\Gamma(1+b)}{\Gamma(1+b+n(1-a))}
$$
\end{lem}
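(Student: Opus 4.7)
The plan is to prove the identity by induction on $n$, with the Beta integral
\begin{equation*}
\int_0^{t} (t-s)^{-a} s^{b} \, ds = t^{1-a+b}\,\frac{\Gamma(1-a)\Gamma(1+b)}{\Gamma(2-a+b)}
\end{equation*}
serving as both the base case and the inductive workhorse. This is the classical parametrix-style Beta convolution computation.

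For the base case $n=1$, the simplex $\Delta_1(t_0)$ is just $\{t_1 \in [0,t_0]\}$, and the Beta integral recalled above immediately gives
\begin{equation*}
\int_0^{t_0} t_1^b\,(t_0-t_1)^{-a}\,dt_1 = t_0^{b+(1-a)}\,\frac{\Gamma(1-a)\Gamma(1+b)}{\Gamma(2-a+b)},
\end{equation*}
which matches the claimed formula with $n=1$.

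For the inductive step, assume the formula holds for $n-1$. I would peel off the innermost integration variable $t_n$, which ranges over $[0,t_{n-1}]$ and contributes the factor $t_n^b (t_{n-1}-t_n)^{-a}$. By the Beta integral this yields
\begin{equation*}
\int_0^{t_{n-1}} t_n^b\,(t_{n-1}-t_n)^{-a}\,dt_n = t_{n-1}^{b+(1-a)}\,\frac{\Gamma(1-a)\Gamma(1+b)}{\Gamma(2-a+b)}.
\end{equation*}
Substituting this back into the multiple integral reduces the problem to an $(n-1)$-dimensional integral over $\Delta_{n-1}(t_0)$ with the same structure but with exponent $b' := b+(1-a)$ in place of $b$ on the innermost surviving variable $t_{n-1}$. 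Applying the induction hypothesis with $b'$ gives
\begin{equation*}
\frac{\Gamma(1-a)\Gamma(1+b)}{\Gamma(2-a+b)}\cdot \frac{t_0^{b'+(n-1)(1-a)}\,\Gamma^{n-1}(1-a)\,\Gamma(1+b')}{\Gamma(1+b'+(n-1)(1-a))}.
\end{equation*}
A simplification using $1+b' = 2-a+b$ causes the factor $\Gamma(2-a+b)$ in the denominator of the first fraction to cancel the factor $\Gamma(1+b') = \Gamma(2-a+b)$ in the numerator of the second, and combining exponents and Gamma factors yields precisely the right-hand side of the claim.

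There is no real obstacle here: the only things to watch are the bookkeeping of the exponents and the hypothesis $b>-1$, $a<1$ that keeps all integrals absolutely convergent at each step (ensuring, in particular, that the updated exponent $b' = b+(1-a) > -1$ so that the induction hypothesis applies). The condition $a\in[0,1)$ also ensures the Gamma values $\Gamma(1-a)$ are finite and positive.
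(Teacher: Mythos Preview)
Your proof is correct and follows essentially the same approach as the paper: both reduce the simplex integral by repeatedly applying the one-dimensional Beta identity $\int_0^t s^{b}(t-s)^{-a}\,ds = t^{b+1-a}B(1+b,1-a)$, with your version spelling out the induction and the cancellation of the intermediate $\Gamma(2-a+b)$ factors explicitly.
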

\begin{proof}
Using the change of variables $s=ut$, one has
$$
\int_0^t s^{b} (t-s)^{-a} ds = t^{b+1-a} \int_0^1 u^{b} (1-u)^{-a} du = t^{b+1-a} B(1+b,1-a)
$$

\noindent where $(x,y) \mapsto B(x,y)=\int_0^1 t^{x-1}(1-t)^{y-1} dt$ stands for the standard Beta function. Using this equality repeatedly, we obtain the statement.
\end{proof}
\begin{lem}\label{convolution:local:time}Let $c_1 >0$. For any $(x,x_0) \in \rr^2$, $0\leq \ell_0 \leq \ell_2$ and $0< s < t$, one has
$$
 \int_{\rr \times (\ell_0, \ell)} \frac{1}{(t-s)^{\frac12}} H_0(c_1 (t-s), |x| + |x'| + \ell- \ell' ) \frac{1}{s^{\frac12}} H_0(c_1 s, |x'|+ |x_0|+ \ell'- \ell_0) dx' d\ell' \leq \frac{C}{t^{\frac12}} H_0(c t, |x|+ |x_0| + \ell - \ell_0)
$$

\noindent for some positive constants $C, \, c$ independent of $t$, $x_0$, $\ell_0$ and $\ell$.  Similarly, for any $(x_0,m_0) \in \mathcal{J}$, $m\geq m_0$ and $0< s < t$, one has
$$
 \int_{(m_0, m) \times (-\infty,m')} \frac{1}{(t-s)^{\frac12}} H_0(c_1 (t-s), 2m-x-x') \frac{1}{s^{\frac12}} H_0(c_1 s, 2m'-x'-x_0) dx' dm' \leq \frac{C}{t^{\frac12}} H_0(c t, 2m-x-x_0)
$$

\noindent for some positive constants $C, \, c$ independent of $t$, $x_0$, $m_0$ and $m$. 
\end{lem}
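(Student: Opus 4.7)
The two estimates are structurally identical, so I focus on the first inequality and indicate at the end how the same argument handles the running-maximum case.

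The strategy is to diagonalise the quadratic form in the exponent through an explicit change of variables. Introduce
\[
y_1 := |x|+|x'|+\ell-\ell',\qquad y_2 := |x'|+|x_0|+\ell'-\ell_0,
\]
both nonnegative on the domain of integration, and note that $y_1+y_2 = D+2|x'|$ with $D := |x|+|x_0|+(\ell-\ell_0)$, whereas $y_1-y_2$ is affine in $\ell'$ and ranges over an interval of length $2(\ell-\ell_0)$. Splitting the $x'$-integration into its two signs, each sheet maps (with Jacobian $2$) onto the same region
\[
R := \{(y_1,y_2)\in [0,\infty)^2 \,:\, y_1+y_2\ge D,\ |y_1-y_2-(|x|-|x_0|)|\le \ell-\ell_0\},
\]
so that the left-hand side equals $\frac{1}{2\pi c_1 s(t-s)}\int_R \exp\left(-\frac{y_1^2}{2c_1(t-s)}-\frac{y_2^2}{2c_1 s}\right)\,dy_1\,dy_2$.

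The key algebraic identity is
\[
\frac{y_1^2}{c_1(t-s)}+\frac{y_2^2}{c_1 s} \;=\; \frac{(y_1+y_2)^2}{c_1 t} \;+\; \frac{(sy_1-(t-s)y_2)^2}{c_1\,ts(t-s)},
\]
which I would verify by a direct quadratic-form computation. Under the linear substitution $(u,v):=(y_1+y_2,\,sy_1-(t-s)y_2)$ (Jacobian $t$), the integrand becomes a product of two independent one-dimensional Gaussians, of variances $c_1 t$ in $u$ and $c_1 ts(t-s)$ in $v$. The image of $R$ is the set $\{u\geq D\}\cap\{v\in I(u)\}$, where $I(u)$ is an interval of length $t(\ell-\ell_0)$ whose centre $v_c(u)=\tfrac12[t(|x|-|x_0|)-u(t-2s)]$ is affine in $u$.

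The $u$-integration is controlled by the standard Gaussian tail bound $\int_D^\infty e^{-u^2/(2c_1 t)}\,du\le \sqrt{\pi c_1 t/2}\,e^{-D^2/(2c_1 t)}$, and the $v$-integration by whichever of the following is smallest: the full Gaussian mass $\sqrt{2\pi c_1 ts(t-s)}$; the length $t(\ell-\ell_0)$; or, when $I(u)$ lies entirely at distance $d(u)>0$ from $v=0$, the refined bound $t(\ell-\ell_0)\,e^{-d(u)^2/(2c_1 ts(t-s))}$. Collecting the prefactor $\frac{1}{2\pi c_1 ts(t-s)}$ and comparing against $\frac{C}{\sqrt{t}}H_0(ct,D)$ yields the desired estimate, at the cost of inflating the constant $c$ above $c_1$ through the elementary space-time inequality $D\,e^{-D^2/(4c_1 t)}\le C\sqrt{t}$ whenever the length bound is invoked (noting $\ell-\ell_0\le D$). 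For the running-maximum case, setting $y_1:=2m-x-x'$, $y_2:=2m'-x'-x_0$, $D:=2m-x-x_0$ reproduces exactly the same structure with $m-m_0$ playing the role of $\ell-\ell_0$, and the argument transfers verbatim.

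\textbf{Main obstacle.} The subtle step is the matching: a naive use of the full Gaussian bound on $v$ gives only $\frac{C}{\sqrt{s(t-s)}}\,e^{-D^2/(2c_1 t)}$, which does \emph{not} dominate $\frac{C}{\sqrt{t}}\,H_0(ct,D)$ when $s$ is near $0$ or $t$. The resolution exploits that, when $s$ is small, the centre $v_c(u)$ of the $v$-interval is far from $0$ for $u>D$, and the refined exponential bound in (iii) confines the effective $u$-integration to a narrow slab near $u=D$ of width comparable to $\sqrt{s(t-s)/t}/|t-2s|$; properly tracking this case against the other two regimes is what recovers the correct order $t^{-1/2}$.
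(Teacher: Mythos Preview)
Your approach is correct in outline but takes a genuinely different route from the paper. The paper never diagonalises the quadratic form; instead it argues by a short case split. First it separates the diagonal regime $D:=|x|+|x_0|+\ell-\ell_0\le t^{1/2}$ from the off-diagonal regime $D>t^{1/2}$. In the diagonal regime one simply observes that for $s\in[t/2,t]$ (resp.\ $s\in[0,t/2]$) the factor $s^{-1/2}H_0(c_1s,\cdot)$ (resp.\ $(t-s)^{-1/2}H_0(c_1(t-s),\cdot)$) is pointwise bounded by $Ct^{-1}\le \tfrac{C}{\sqrt t}H_0(ct,D)$, while the remaining factor has $(x',\ell')$-integral bounded uniformly in $s$. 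In the off-diagonal regime the integration domain is split according to which of the two Gaussian arguments is larger; on each piece that argument is $\asymp D$, so the corresponding factor is bounded by $\tfrac{C}{\sqrt t}H_0(ct,D)$ via the space--time inequality, and the other factor again has bounded integral. No explicit computation of the $(u,v)$-integral is needed.

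Your change of variables and the identity for the quadratic form are correct, and the strategy can be completed, but the resolution of your ``main obstacle'' is not quite as you describe it. The claim that ``when $s$ is small, the centre $v_c(u)$ is far from $0$ for $u>D$'' is false as stated: for instance, when $|x_0|=0$ one checks that $v_c(D)+\tfrac{t(\ell-\ell_0)}{2}=sD>0$ while $v_c(D)-\tfrac{t(\ell-\ell_0)}{2}=sD-t(\ell-\ell_0)<0$ for small $s$, so $I(u)$ actually contains $0$ for every $u$ in the interval $[D,\,D+2sD/(t-2s)]$. The correct completion is a further split on $u$: on this short sub-interval (of length $O(sD/t)$) bound the $v$-integral by the full mass $\sqrt{2\pi c_1 ts(t-s)}$ and absorb the residual factor $D/\sqrt t$ into the exponent; for larger $u$ the distance $d(u)$ is an affine function of $u$ with slope $|t-2s|/2$, and the refined bound produces a Gaussian in $u-D$ of variance $O(ts(t-s)/(t-2s)^2)$, whose integral is $O(\sqrt{ts(t-s)}/|t-2s|)$. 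Both pieces then give $\tfrac{C}{t}e^{-D^2/(2ct)}$. So your method ultimately succeeds, but it is substantially more laborious than the paper's two-line ``bound one factor pointwise, integrate the other'' argument; the trade-off is that your computation exhibits the exact Gaussian convolution structure, which the paper's soft estimates hide.
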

\begin{proof} We will only prove the first bound. The second one follows from similar arguments. For simplicity, we write
$$
C_2:= \int_{\rr \times (\ell_0, \ell)} \frac{1}{(t-s)^{\frac12}} H_0(c (t-s), |x| + |x'| + \ell- \ell' ) \frac{1}{s^{\frac12}} H_0(c s, |x'|+ |x_0|+ \ell'- \ell_0) dx' d\ell'.
$$

Let us assume that $|x|+|x_0| + \ell-\ell_0 \leq t^{\frac12}$. We use the fact that the diagonal estimate is global. For $s\in [\frac t2, t]$, one has $s \asymp t$ so that 
$$
 \frac{1}{s^{\frac12}} H_0(c s, |x'|+ |x_0|+ \ell'- \ell_0) \leq Ct^{-1} \leq C H_0(c t, |x|+ |x_0|+ \ell- \ell_0)
$$   

\noindent which in turn implies:
$$
|C_2| \leq \frac{C}{\sqrt{t}} H_0(c t, |x|+ |x_0| + \ell - \ell_0) \int_{\rr \times (\ell_0, \ell)} \frac{1}{(t-s)^{\frac12}} H_0(c (t-s), |x| + |x'| + \ell- \ell') dx' d\ell' \leq \frac{C}{\sqrt{t}} H_0(c t, |x|+ |x_0| + \ell - \ell_0).
$$
Similarly, for $s\in [0,\frac t2]$, one has
$$
|C_2| \leq \frac{C}{\sqrt{t}} H_0(c t, |x|+ |x_0| + \ell - \ell_0).
$$ 
Hence, the claim follows in the diagonal regime $|x|+|x_0| + \ell-\ell_0 \leq t^{\frac12}$.  We now consider the off-diagonal regime $|x|+|x_0| + \ell-\ell_0 > t^{\frac12}$. We write $\rr \times (\ell_0, \ell)= D_1 \cup D_2$ where 
\begin{align*}
D_1& := \left\{ (x',\ell')\in \rr \times (\ell_0,\ell): |x'| + |x_0| + \ell'-\ell_0 \leq |x|-|x'| + \ell- \ell'\right\},   \\
D_2 & := \left\{ (x',\ell')\in \rr \times (\ell_0,\ell): |x'| + |x_0| + \ell'-\ell_0 > |x|-|x'| + \ell- \ell'\right\}.
\end{align*}
On the set $D_1$, we remark $|x|-|x'| + \ell-\ell' \asymp |x|+|x_0| + \ell-\ell_0$, so that
\begin{align*}
 \frac{1}{(t-s)^{\frac12}} H_0(c (t-s), |x| + |x'| + \ell- \ell' ) & \leq \frac{C}{t}  \frac{(|x|+|x_0| + \ell-\ell_0 )^2}{(t-s)^{\frac12}} H_0(c(t-s),|x|+|x_0| + \ell-\ell_0) \\
 & \leq \frac{C}{t^{\frac12}} H_0(ct, |x|+|x_0| + \ell-\ell_0)
\end{align*}

\noindent where we used the space-time inequality for the last inequality. Hence, one gets
$$
\int_{D_1} \frac{1}{(t-s)^{\frac12}} H_0(c (t-s), |x| + |x'| + \ell- \ell' ) \frac{1}{s^{\frac12}} H_0(c s, |x'|+ |x_0|+ \ell'- \ell_0) dx' d\ell' \leq  \frac{C}{t^{\frac12}} H_0(ct, |x|+|x_0| + \ell-\ell_0).
$$

On the set $D_2$, one has $|x|-|x'| + \ell-\ell' \asymp |x|+|x_0| + \ell-\ell_0$, so that
\begin{align*}
 \frac{1}{s^{\frac12}} H_0(c s, |x'| + |x_0| + \ell'- \ell_0 ) & \leq \frac{C}{t} \frac{(|x|+|x_0| + \ell-\ell_0 )^2}{s^{\frac12}} H_0(cs,|x|+|x_0| + \ell-\ell_0) \\
 & \leq \frac{C}{t^{\frac12}} H_0(ct, |x|+|x_0| + \ell-\ell_0)
\end{align*}

\noindent which in turn yields
$$
\int_{D_2} \frac{1}{(t-s)^{\frac12}} H_0(c (t-s), |x| + |x'| + \ell- \ell' ) \frac{1}{s^{\frac12}} H_0(c s, |x'|+ |x_0|+ \ell'- \ell_0) dx' d\ell' \leq  \frac{C}{t^{\frac12}} H_0(ct, |x|+|x_0| + \ell-\ell_0)
$$

\noindent and proves the claim. 
\end{proof}
\begin{lem}\label{estimate:kernel:proxy}
Let $a>0$. Define $\bar{f}_t(x_0,x):= H_0(a t, x-x_0) - H_0(at, x+x_0)$. For any $\beta\in [0,1]$, there exists $C,\ c>1$, such that for any $(x_0,x)\in \rr^2$ such that $x x_0 \geq 0$ and $r=0,2$, the following estimates hold:
\begin{align*}
|\partial^{r}_{x_0}\bar f_t(x_0,x)| & \leq \frac{C}{t^{\frac{r}{2}}}\left\{\frac{|x|^\beta}{t^\frac{\beta}{2}}\wedge \frac{|x_0|^\beta}{t^\frac{\beta}{2}} \wedge 1 \right\} H_0(c t, x-x_0).
\end{align*}
Let $\bar{p}_t(x_0,x) := H_0(at, x-x_0) - H_0(a  t, 2m_0 - x-x_0)$, for $r=0,1,2$, one has
\begin{align*}
|\partial^{r}_{x_0}\bar p_t(x_0,x)| & \leq \frac{C}{t^{\frac r 2}} \left\{\frac{|m_0-x_0|^\beta}{t^{\frac{\beta}{2}}} \wedge \frac{|m_0-x|^\beta}{t^{\frac{\beta}{2}}} \wedge 1 \right\} H_0(c t, x-x_0), \quad x_0, \, x \leq m_0.
\end{align*}
\end{lem}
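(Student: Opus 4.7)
My plan rests on the ratio identity for the two Gaussians together with the elementary bound $1-e^{-u}\le u^{\beta}\wedge 1$ valid for $u\ge 0$ and $\beta\in[0,1]$, combined with the space-time inequality to adjust for off-diagonal behaviour. First I would reduce to the case $x,x_{0}\ge 0$ by symmetry in $\bar f_{t}$, and analogously, for $\bar p_{t}$, shift so that $m_{0}=0$ and set $u=-x$, $v=-x_{0}$, which turns $\bar p_{t}$ into the same object as $\bar f_{t}$ with $(u,v)$ in place of $(x,x_{0})$. Thus it is enough to treat $\bar f_{t}$.

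The key identity is
\[
\bar f_{t}(x_{0},x)=H_{0}(at,x-x_{0})-H_{0}(at,x+x_{0})=H_{0}(at,x-x_{0})\bigl[1-e^{-2xx_{0}/(at)}\bigr],
\]
since $(x+x_{0})^{2}=(x-x_{0})^{2}+4xx_{0}$. Applying $1-e^{-u}\le u^{\beta}\wedge 1$ with $u=2xx_{0}/(at)$ immediately yields the product-type prefactor
\[
|\bar f_{t}(x_{0},x)|\le C\,H_{0}(at,x-x_{0})\Bigl(\tfrac{(xx_{0})^{\beta}}{t^{\beta}}\wedge 1\Bigr).
\]
The next step, which I view as the main obstacle, is to convert this product factor into the targeted minimum $\tfrac{|x|^{\beta}}{t^{\beta/2}}\wedge\tfrac{|x_{0}|^{\beta}}{t^{\beta/2}}\wedge 1$ while allowing a worsening of the Gaussian constant from $a$ to $c>a$. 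The difficulty is that the two expressions are not pointwise comparable; the compensation must come from the off-diagonal exponential decay. I would split into the diagonal regime $\min(x,x_{0})\le\sqrt{t}$ and the off-diagonal regime $\min(x,x_{0})>\sqrt{t}$. In the diagonal regime, writing (WLOG) $x_{0}\le x$ gives $x\le x_{0}+|x-x_{0}|$, so $(xx_{0})^{\beta}\le Cx_{0}^{\beta}(x_{0}^{\beta}+|x-x_{0}|^{\beta})$, and the $|x-x_{0}|^{\beta}$ part is absorbed into the Gaussian via the space-time inequality $|y|^{p}H_{0}(at,y)\le Ct^{p/2}H_{0}(ct,y)$. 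In the off-diagonal regime both $(xx_{0})^{\beta}/t^{\beta}\wedge 1$ and the target factor equal $1$, and the comparison reduces to the elementary $H_{0}(at,y)\le\sqrt{c/a}\,H_{0}(ct,y)$ for $c>a$.

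For the second derivative, differentiation gives $\partial_{x_{0}}^{2}\bar f_{t}=H_{2}(at,x-x_{0})-H_{2}(at,x+x_{0})$, which, after the ratio identity and the identity $(x+x_{0})^{2}=(x-x_{0})^{2}+4xx_{0}$, rewrites as
\[
\partial_{x_{0}}^{2}\bar f_{t}=\frac{H_{0}(at,x-x_{0})}{at}\Bigl[\Bigl(\tfrac{(x-x_{0})^{2}}{at}-1\Bigr)(1-e^{-\alpha})-2\alpha e^{-\alpha}\Bigr],\qquad \alpha=\tfrac{2xx_{0}}{at}.
\]
The two bracket terms are handled separately: for the first, $|1-e^{-\alpha}|\le\alpha^{\beta}\wedge 1$ isolates the desired prefactor, while $\bigl|\tfrac{(x-x_{0})^{2}}{at}-1\bigr|H_{0}(at,x-x_{0})\le CH_{0}(ct,x-x_{0})$ by the space-time inequality; for the second, $\alpha e^{-\alpha}\le\alpha\wedge e^{-1}\le\alpha^{\beta}\wedge 1$. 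The outer prefactor $(at)^{-1}$ supplies the claimed $t^{-r/2}=t^{-1}$, and one then converts the product prefactor to the minimum form exactly as in the $r=0$ case.

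Finally, the second assertion for $\bar p_{t}$ follows by the translation $m_{0}=0$ and sign change described above, which makes the reflected kernel coincide structurally with $\bar f_{t}$, with $m_{0}-x$ and $m_{0}-x_{0}$ playing the roles of $x$ and $x_{0}$; the cases $r=0$ and $r=2$ are identical, and the case $r=1$ is obtained from $\partial_{x_{0}}\bar p_{t}=-H_{1}(at,x-x_{0})+H_{1}(at,2m_{0}-x-x_{0})$ by the same ratio-plus-interpolation argument, with the outer factor $(at)^{-1/2}$ producing the expected $t^{-1/2}$.
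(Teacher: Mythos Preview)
Your argument for $r=0,2$ is correct and proceeds by a genuinely different mechanism than the paper's. The paper obtains the small factor by the mean value theorem: it writes $\bar f_t(x_0,x)=2x\,\partial_x H_0(at,\cdot)$ evaluated at an intermediate point between $-x$ and $x$, and then splits according to $|x|^2\lessgtr t$ (using $|x|^{1-\beta}\le t^{(1-\beta)/2}$ in the first case, and the trivial bound $|x|^\beta/t^{\beta/2}\ge 1$ in the second). You instead exploit the exact Gaussian ratio $H_0(at,x+x_0)=H_0(at,x-x_0)\,e^{-2xx_0/(at)}$ together with the interpolation $1-e^{-u}\le u^\beta\wedge 1$, which immediately produces the \emph{product} prefactor $(xx_0/t)^\beta\wedge 1$; the conversion to the \emph{minimum} form via $x^\beta\le x_0^\beta+|x-x_0|^\beta$ and absorption of $|x-x_0|^\beta/t^{\beta/2}$ into the Gaussian is clean and correct. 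Your treatment of $r=2$ through the explicit bracket formula and the bound $\alpha e^{-\alpha}\le\alpha^\beta\wedge 1$ is arguably more transparent than the paper's, which simply refers $r=2$ and the $|x_0|^\beta$ variant to ``similar arguments''. The paper's MVT route, on the other hand, does not rely on the specific Gaussian algebra and would adapt more directly to perturbed kernels.

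One caveat on the $r=1$ case for $\bar p_t$: your sentence that it ``follows by the same ratio-plus-interpolation argument'' is too optimistic. After your decomposition one is left with the term $\tfrac{2u}{at}e^{-\alpha}H_0(at,v-u)$ (here $u=m_0-x$, $v=m_0-x_0$, $\alpha=2uv/(at)$), and this term does \emph{not} carry a factor that vanishes as $v\to 0$. Indeed $\partial_{x_0}\bar p_t(m_0,x)=-2H_1(at,x-m_0)\neq 0$, so the bound with the factor $|m_0-x_0|^\beta/t^{\beta/2}$ cannot hold at $x_0=m_0$. The paper's proof also omits $r=1$ entirely, and in the applications only the $|m_0-x|^\beta$ and $1$ parts of the minimum are actually used for the first-order term; your argument does yield those two bounds.
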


\begin{proof}
From the expression of $\bar{f}_t(x_0,x)$, the following estimates for $\bar{f}_t(x_0,x)$ and its derivatives hold
\begin{align*}
|\partial^{r}_{x_0}\bar{f}_t(x_0,x)| & \leq \frac{C}{t^{\frac r 2}} ( H_0(c t, x-x_0) +  H_0(c t, x+x_0) ), \quad r=0,2.
\end{align*}
Furthermore, for $(x_0,x)\in \rr^2$ such that $x x_0 \geq0$, one has $H_0(c t, x+x_0) \leq  H_0(c t, x-x_0)$, since in the exponent
\begin{align*}
(x-x_0)^2 + 4 x_0(x-x_0) + 4 x_0^2 & \geq (x-x_0)^2.
\end{align*}

Hence, we deduce that $|\partial^{r}_{x_0}\bar f_t(x_0,x)| \leq C t^{-\frac{r}{2}} g(c t, x-x_0)$. 
To derive the bounds with the $|x|^\beta$ or $|x_0|^{\beta}$ terms, we first consider the case where $|x|^2\leq  t$, to estimate $\bar f_t(x_0,x) = H_0(a(y,\ell) t, x-x_0) - H_0(a(y,\ell)t, x+x_0)$ one applies the mean value theorem to $g(a(y,\ell)t, x-x_0)$ with respect to the points $x$ and $-x$ to obtain for some $\theta \in [0,1]$,
\begin{align*}
|\bar f_t(x_0, x)| & = |2 x \partial_x H_0(a(y,\ell)t, x_0- \theta x  + (1-\theta)x)|  \leq  C \frac{|x|^\beta }{t^{\frac{\beta}{2}}} H_0(c t, x-x_0)
\end{align*}
where in the second line we have used the space-time inequality and the fact that $|x|^{1-\beta} \leq t^\frac{1-\beta}{2}$. For the case that $|x|^2 \geq  t$, one directly gets
\begin{align*}
 |\bar{f}_t(x_0,x)|\leq C\frac{|x|^\beta}{t^\frac\beta2} H_0(c t, x-x_0) .
\end{align*}
The proof for the second derivatives of $\bar{f}^z_t(x_0,x)$ as well as the estimates with the $|x_0|^{\beta}$ term and the estimates for $\partial^{r}_{x_0}\bar{p}_t(x_0,x)$ follow similar arguments and details are omitted.
\end{proof}

\vskip 30pt
\bibliographystyle{alpha}
\bibliography{bibli}
\end{document}